\theoremstyle{definition}
\newtheorem{mydef}{Definition}[section]
\newtheorem{lem}[mydef]{Lemma}
\newtheorem{thm}[mydef]{Theorem}
\newtheorem{cor}[mydef]{Corollary}
\newtheorem{defin}[mydef]{Definition}
\newtheorem{remark}[mydef]{Remark}
\newtheorem{notation}[mydef]{Notation}
\newtheorem{fact}[mydef]{Fact}
\newcommand{\fct}[2]{{}^{#1}#2}
\newcommand{\ba}{\bar{a}}
\newcommand{\bb}{\bar{b}}
\newcommand{\Ksatpp}[2]{{#1}^{#2\text{-sat}}}
\newcommand{\Ksatp}[1]{\Ksatpp{\K}{#1}}
\newcommand{\dom}{\operatorname{dom}}
\newcommand{\ran}{\operatorname{ran}}
\newcommand{\cf}[1]{\text{cf} (#1)}
\newcommand{\seq}[1]{\langle #1 \rangle}
\newcommand{\rest}{\upharpoonright}
\newcommand{\s}{\mathfrak{s}}
\newcommand{\ts}{\mathfrak{t}}
\newcommand{\leap}[1]{\le_{#1}}
\newcommand{\ltap}[1]{<_{#1}}
\newcommand{\geap}[1]{\ge_{#1}}
\newcommand{\lta}{\ltap{\K}}
\newcommand{\lea}{\leap{\K}}
\newcommand{\K}{\mathbf{K}}
\newcommand{\Pp}[1]{\mathcal{P}_{#1}}
\newcommand{\PK}{\Pp{\K}}
\newcommand{\PKp}[1]{\Pp{\K_{#1}}}
\newbox\noforkbox \newdimen\forklinewidth
\noforkbox\hbox{\lower 2pt\box1\lower
2pt\box0\relax}
\def\unionstick{\mathop{\copy\noforkbox}\limits}
\newcommand{\nf}{\unionstick}
\newcommand{\nfs}[4]{#2 \nf_{#1}^{#4} #3}
\def\1nf{\unionstick^{(1)}}
\def\2nf{\unionstick^{(2)}}
\def\3nf{\unionstick^{(3)}}
\newcommand{\gtp}{\mathbf{tp}}
\newcommand{\gtpp}[1]{\gtp_{#1}}
\newcommand{\gS}{\mathbf{S}}
\newcommand{\Ii}{\mathbb{I}}
\newcommand{\ehanf}[1]{\beth_{\left(2^{#1}\right)^+}}
\newcommand{\Ll}{\mathbb{L}}
\newcommand{\tlt}{\triangleleft}
\newcommand{\tleq}{\trianglelefteq}
\newcommand{\goodp}{\text{good}^+}
\newcommand{\LS}{\text{LS}}
\title{Tameness from two successive good frames}
\date{\today \\
AMS 2010 Subject Classification: Primary 03C48. Secondary: 03C45, 03C52, 03C55, 03C75.}
\keywords{abstract elementary classes, good frames, tameness}
\author{Sebastien Vasey}
\email{sebv@math.harvard.edu}
\urladdr{http://math.harvard.edu/\textasciitilde sebv/}
\address{Department of Mathematics \\ Harvard University \\ Cambridge, Massachusetts, USA}
\begin{document}

\begin{abstract}
  We show, assuming a mild set-theoretic hypothesis, that if an abstract elementary class (AEC) has a superstable-like forking notion for models of cardinality $\lambda$  and a superstable-like forking notion for models of cardinality $\lambda^+$, then orbital types over models of cardinality $\lambda^+$ are determined by their restrictions to submodels of cardinality $\lambda$. By a superstable-like forking notion, we mean here a good frame, a central concept of Shelah's book on AECs. 

  It is known that locality of orbital types together with the existence of a superstable-like notion for models of cardinality $\lambda$ implies the existence of a superstable-like notion for models of cardinality $\lambda^+$, and here we prove the converse. An immediate consequence is that forking in $\lambda^+$ can be described in terms of forking in $\lambda$.
\end{abstract}

\maketitle

\tableofcontents

\section{Introduction}

Good frames are the central notion of Shelah's two volume book \cite{shelahaecbook, shelahaecbook2} on classification theory for abstract elementary classes (AECs). Roughly speaking, an AEC is a concrete category (whose objects are structures) satisfying several axioms (for example, morphisms must be monomorphisms and the class must be closed under directed colimits). It generalizes the notion of an elementary class (i.e.\ a class axiomatized by an $\Ll_{\omega, \omega}$-theory, where the morphisms are elementary embeddings) and also encompasses many infinitary logics such as $\Ll_{\infty, \omega}$ (i.e.\ disjunctions and conjunctions of arbitrary, possibly infinite, length are allowed). An AEC $\K$ has a good $\lambda$-frame if its restriction to models of cardinality $\lambda$ is reasonably well-behaved (e.g.\ has amalgamation, no maximal models, and is stable) and it admits an abstract notion of independence (for orbital types of elements over models of cardinality $\lambda$) that satisfies some of the basic properties of forking in a superstable elementary class: monotonicity, existence, uniqueness, symmetry, and local character. Here, local character is described not as ``every type does not fork over a finite set'' but as ``every type over the union of an increasing continuous chain of models of cardinality $\lambda$ does not fork over some member of the chain''. The theory of good frames is used heavily in several recent results of the classification theory of AECs, including the author's proof of Shelah's eventual categoricity conjecture in universal classes \cite{ap-universal-apal, categ-universal-2-selecta}, see also \cite{bv-survey-bfo} for a survey.

The reason for restricting oneself to models of cardinality $\lambda$ is that the compactness theorem fails in general AECs, and so it is much easier in practice to exhibit a local notion of forking than it is to define forking globally for models of all sizes. In fact, Shelah's program is to start with a good $\lambda$-frame and only then try to extend it to models of bigger sizes. For this purpose, he describes a dividing line\footnote{The nonstructure side is described by \cite[II.5.11, II.8.4, II.8.5]{shelahaecbook}, showing that failure of successfulness implies the AEC has many non-isomorphic models.}, being successful, and shows that if a good $\lambda$-frame is successful, then there is a good $\lambda^+$-frame on an appropriate subclass of $\K_{\lambda^+}$.

A related approach is to outright assume some weak amount of compactness. Tameness \cite{tamenessone} was proposed by Grossberg and VanDieren to that end: $\lambda$-tameness says that orbital types are determined by their restrictions of cardinality $\lambda$. This is a nontrivial assumption, since in AECs syntactic types are not as well-behaved as one might wish, so one defines types purely semantically (roughly, as the finest notion of type preserving isomorphisms and the $\K$-substructure relation). It is known that tameness follows from a large cardinal axiom (see Fact \ref{tamelc}) and some amount of it can be derived from categoricity (see Fact \ref{tameness-categ}). The present paper gives another way to derive some tameness.

Grossberg conjectured in 2006 that\footnote{See the introduction of \cite{jarden-tameness-apal} for a detailed history.}, assuming amalgamation in $\lambda^+$, a good $\lambda$-frame extends to a good $\lambda^+$-frame if the class is $\lambda$-tame. He told his conjecture to Jarden who could prove all the axioms of a good $\lambda^+$-frame, except symmetry. Boney \cite{ext-frame-jml} then proved symmetry assuming a slightly stronger version of tameness, Jarden \cite{jarden-tameness-apal} proved symmetry from a certain continuity assumption, and Boney and the author finally settled the full conjecture \cite{tame-frames-revisited-jsl}, see Fact \ref{ext-frame}. In this context, forking in the good $\lambda^+$-frame can be described in terms of forking in the good $\lambda$-frame. Let us call this result the \emph{upward frame transfer theorem}.

This paper discusses the converse of the upward frame transfer theorem. Consider the following question: \emph{if} there is a good $\lambda$-frame and a good $\lambda^+$-frame, can we say anything on how the two frames are related (i.e.\ can forking in $\lambda^+$ be described in terms of forking in $\lambda$?) and can we conclude some amount of tameness? We answer positively by proving the following converse to the upward frame transfer theorem:

\textbf{Corollary \ref{main-cor-simplified}.}
Let $\K$ be an AEC and let $\lambda \ge \LS (\K)$. Assume $2^{\lambda} < 2^{\lambda^+}$. If there is a categorical good $\lambda$-frame $\s$ on $\K_{\lambda}$ and a good $\lambda^+$-frame $\ts$ on $\K_{\lambda^+}$, then $\K$ is $(\lambda, \lambda^+)$-tame. Moreover, $\s_{\lambda^+} \rest \Ksatp{\lambda^+}_{\lambda^+} = \ts \rest \Ksatp{\lambda^+}_{\lambda^+}$ (see Definition \ref{s-lambdap-def}).

In the author's opinion, Corollary \ref{main-cor-simplified} is quite a surprising result since it shows that we cannot really study superstability in $\lambda$ and $\lambda^+$ ``independently'': the two levels must in some sense be connected (this follows from a canonicity result for good frames, see the paragraph after next). Put another way, two successive local instances of superstability already give a nontrivial amount of compactness. In fact after the initial submission of this article, Corollary \ref{main-cor-simplified} was used by the author \cite{categ-amalg-v4} as a key tool to \emph{prove} a class has some amount of tameness and deduce strong versions of Shelah's eventual categoricity conjecture in several types of AECs.

In Corollary \ref{main-cor-simplified}, ``categorical'' simply means that $\K$ is assumed to be categorical in $\lambda$. We see it as a very mild assumption, since we can usually restrict to a subclass of saturated models if this is not the case, see the discussion after Definition \ref{sl-def}. As for $(\lambda, \lambda^+)$-tameness, it means that types over models of cardinality $\lambda^+$ are determined by their restrictions of cardinality $\lambda$. In fact, it is possible to obtain a related conclusion by starting with a good $\lambda^+$-frame on the class of \emph{saturated} models in $\K$ of cardinality $\lambda^+$. In this case, we deduce that $\K$ is $(\lambda, \lambda^+)$-\emph{weakly} tame, i.e.\ only types over \emph{saturated} models of cardinality $\lambda^+$ are determined by their restrictions to submodels of cardinality $\lambda$. We deduce that weak tameness is equivalent to the existence of a good $\lambda^+$-frame on the saturated models of cardinality $\lambda^+$, see Corollary \ref{main-cor-gch}.

An immediate consequence of Corollary \ref{main-cor-simplified} is that forking in $\lambda^+$ (at least over saturated models) can be described in terms of forking in $\lambda$. Indeed, the upward frame extension theorem gives a good $\lambda^+$-frame with such a property, and good frames on subclass of saturated models are canonical: there can be at most one, see Fact \ref{canon-fact}. In fact, assuming that forking in $\lambda^+$ is determined by forking in $\lambda$ is equivalent to tameness (see \cite[3.2]{ext-frame-jml}) because of the uniqueness and local character properties of forking. In Corollary \ref{main-cor-simplified} we of course do \emph{not} start with such an assumption: forking in $\lambda^+$ is \emph{any} abstract notion satisfying some superstable-like properties for models of cardinality $\lambda^+$.

The proof of Corollary \ref{main-cor-simplified} goes as follows: we use $2^{\lambda} < 2^{\lambda^+}$ to derive that the good $\lambda$-frame is \emph{weakly successful} (a dividing line introduced by Shelah in Chapter II of \cite{shelahaecbook}). This is the only place where $2^{\lambda} < 2^{\lambda^+}$ is used. Being weakly successful imply that we can extend the good $\lambda$-frame from types of singletons to types of models of cardinality $\lambda$. We then have to show that the good $\lambda$-frame is also successful. This is equivalent to a certain reflecting down property of nonforking of models. Jarden \cite{jarden-tameness-apal} has shown that successfulness follows from $(\lambda, \lambda^+)$-weak tameness and amalgamation in $\lambda^+$, and here we push Jarden's argument further by showing that having a good $\lambda^+$-frame suffices, see Theorem \ref{main-thm}. A key issue that we constantly deal with is the question of whether a union of saturated models of cardinality $\lambda^+$ is saturated. In Section \ref{decent-sec}, we introduce a new property of forking, being decent, which characterizes a positive answer to this question and sheds further light on recent work of VanDieren \cite{vandieren-symmetry-apal, vandieren-chainsat-apal}. The author believes it has independent interest.

Tameness has been used by Grossberg and VanDieren to prove an upward categoricity transfer from categoricity in two successive cardinals \cite{tamenesstwo, tamenessthree}. In Section \ref{categ-sec}, we revisit this result and show that tameness is in some sense needed to prove it. Although this could have been derived from the results of Shelah's books, this seems not to have been noticed before. Nevertheless, the results of this paper show that if an AEC is categorical $\lambda$ and $\lambda^+$ and has a good frame in both $\lambda$ and $\lambda^+$, then it is categorical in $\lambda^{++}$, see Corollary \ref{categ-cor}.

To read this paper, the reader should preferably have a solid knowledge of good frames, including knowing Chapter II of \cite{shelahaecbook}, \cite{jrsh875}, as well as \cite{jarden-tameness-apal}. Still, we have tried to give all the definitions and relevant background facts in Section \ref{prelim-sec}.

The author thanks John T.\ Baldwin, Will Boney, Rami Grossberg, Adi Jarden, Marcos Mazari-Armida, and the referee for comments that helped improve the quality of this paper.

\section{Preliminaries}\label{prelim-sec}

\subsection{Notational conventions}

Given a structure $M$, write write $|M|$ for its universe and $\|M\|$ for the cardinality of its universe. We often do not distinguish between $M$ and $|M|$, writing e.g.\ $a \in M$ or $\ba \in \fct{<\alpha}{M}$ instead of $a \in |M|$ and $\ba \in \fct{<\alpha}{|M|}$. We write $M \subseteq N$ to mean that $M$ is a substructure of $N$.

\subsection{Abstract elementary classes}
An \emph{abstract class} is a pair $\K = (K, \lea)$, where $K$ is a class of structures in a fixed vocabulary $\tau = \tau (\K)$ and $\lea$ is a partial order, $M \lea N$ implies $M \subseteq N$, and both $K$ and $\lea$ respect isomorphisms (the definition is due to Grossberg). Any abstract class admits a notion of \emph{$\K$-embedding}: these are functions $f: M \rightarrow N$ such that $f: M \cong f[M]$ and $f[M] \lea N$.

We often do not distinguish between $K$ and $\K$. For $\lambda$ a cardinal, we will write $\K_\lambda$ for the restriction of $\K$ to models of cardinality $\lambda$. Similarly define $\K_{\ge \lambda}$ or more generally $\K_{S}$, where $S$ is a class of cardinals. We will also use the following notation:

\begin{notation}
  For $\K$ an abstract class and $N \in \K$, write $\PK (N)$ for the set of $M \in \K$ with $M \lea N$. Similarly define $\PKp{\lambda}(N)$, $\PKp{<\lambda}(N)$, etc.
\end{notation}

For an abstract class $\K$, we denote by $\Ii (\K)$ the number of models in $\K$ up to isomorphism (i.e.\ the cardinality of $\K /_{\cong}$). We write $\Ii (\K, \lambda)$ instead of $\Ii (\K_\lambda)$. When $\Ii (\K) = 1$, we say that $\K$ is \emph{categorical}. We say that $\K$ is \emph{categorical in $\lambda$} if $\K_\lambda$ is categorical, i.e.\ $\Ii (\K, \lambda) = 1$.

We say that $\K$ has \emph{amalgamation} if for any $M_0 \lea M_\ell$, $\ell = 1,2$ there is $M_3 \in \K$ and $\K$-embeddings $f_\ell : M_\ell \rightarrow M_3$, $\ell = 1,2$. $\K$ has \emph{joint embedding} if any two models can be $\K$-embedded in a common model. $\K$ has \emph{no maximal models} if for any $M \in \K$ there exists $N \in \K$ with $M \lea N$ and $M \neq N$ (we write $M \lta N$). Localized concepts such as \emph{amalgamation in $\lambda$} mean that $\K_\lambda$ has amalgamation.

The definition of an abstract elementary class is due to Shelah \cite{sh88}:

\begin{defin}\label{aec-def}
  An \emph{abstract elementary class (AEC)} is an abstract class $\K$ in a finitary vocabulary satisfying:

  \begin{enumerate}
  \item Coherence: if $M_0, M_1, M_2 \in \K$, $M_0 \subseteq M_1 \lea M_2$ and $M_0 \lea M_2$, then $M_0 \lea M_1$.
  \item Tarski-Vaught chain axioms: if $\seq{M_i : i \in I}$ is a $\lea$-directed system and $M := \bigcup_{i \in I} M_i$, then:
    \begin{enumerate}
    \item $M \in \K$.
    \item $M_i \lea M$ for all $i \in I$.
    \item Smoothness: if $N \in \K$ is such that $M_i \lea N$ for all $i \in I$, then $M \lea N$.
    \end{enumerate}
  \item Löwenheim-Skolem-Tarski (LST) axiom: there exists a cardinal $\lambda \ge |\tau (\K)| + \aleph_0$ such that for any $N \in \K$ and any $A \subseteq |N|$, there exists $M \in \PKp{\le (|A| + \lambda)} (N)$ with $A \subseteq |M|$. We write $\LS (\K)$ for the least such $\lambda$.
  \end{enumerate}

  We say that an abstract class $\K$ is an \emph{AEC in $\lambda$} if $\K$ satisfies coherence, $\K = \K_\lambda$, $\lambda \ge |\tau (\K)|$, and $\K$ satisfies the Tarski-Vaught chain axioms whenever $|I| \le \lambda$.
\end{defin}
\begin{remark}\label{gen-rmk}
  If $\K$ is an AEC in $\lambda$, then by \cite[II.1.23]{shelahaecbook} there exists a unique AEC $\K^\ast$ such that $\K_\lambda = \K_\lambda^\ast$, $\K_{<\lambda}^\ast = \emptyset$, and $\LS (\K^\ast) = \lambda$. We call $\K^\ast$ the \emph{AEC generated by $\K$.}
\end{remark}

\subsection{Types}

In any abstract class $\K$, we can define a semantic notion of type, called Galois or orbital types in the literature (such types were introduced by Shelah in \cite{sh300-orig}). For $M \in \K$, $A \subseteq |M|$, and $\bb \in \fct{<\infty}{M}$, we write $\gtp_{\K} (\bb / A; M)$ for the orbital type of $\bb$ over $A$ as computed in $M$ (usually $\K$ will be clear from context and we will omit it from the notation). It is the finest notion of type respecting $\K$-embeddings, see \cite[2.16]{sv-infinitary-stability-afml} for a formal definition. When $\K$ is an elementary class, $\gtp (\bb / A; M)$ contains the same information as the usual notion of $\Ll_{\omega, \omega}$-syntactic type, but in general the two notions need not coincide \cite{hs-example}.

The \emph{length} of $\gtp (\bb / A; M)$ is the length of $\bb$. For $M \in \K$ and $\alpha$ a cardinal, we write $\gS_{\K}^\alpha (M) = \gS^\alpha (M)$ for the set of types over $M$ of length $\alpha$. Similarly define $\gS^{<\alpha} (M)$. When $\alpha = 1$, we just write $\gS (M)$. We define naturally what it means for a type to be realized inside a model, to extend another type, and to take the image of a type by a $\K$-embedding.

\subsection{Stability and saturation}

We say that an abstract class $\K$, is \emph{stable in $\lambda$} (for $\lambda$ an infinite cardinal) if $|\gS (M)| \le \lambda$ for any $M \in \K_\lambda$. If $\K$ is an AEC, $\lambda \ge \LS (\K)$, $\K$ is stable in $\lambda$ and $\K$ has amalgamation in $\lambda$, then we will often use without comments the \emph{existence of universal extension} \cite[II.1.16]{shelahaecbook}: for any $M \in \K_\lambda$, there exists $N \in \K_\lambda$ universal over $M$. This means that $M \lea N$ and any extension of $M$ of cardinality $\lambda$ $\K$-embeds into $N$ over $M$.

For $\K$ an AEC and $\lambda > \LS (\K)$, a model $N \in \K$ is called \emph{$\lambda$-saturated} if for any $M \in \K_{<\lambda}$ with $M \lea N$, any $p \in \gS (M)$ is realized in $N$. $N$ is called \emph{saturated} if it is $\|N\|$-saturated.

We will often use without mention the \emph{model-homogeneous = saturated lemma} \cite[II.1.14]{shelahaecbook}: it says that when $\K_{<\lambda}$ has amalgamation, a model $N \in \K$ is $\lambda$-saturated if and only if it is $\lambda$-model-homogeneous. The latter means that for any $M \in \PKp{<\lambda} (N)$, any $M' \in \K_{<\lambda}$ $\lea$-extending $M$ can be $\K$-embedded into $N$ over $M$. In particular, assuming amalgamation and joint embedding, there is at most one saturated model of a given cardinality. We write $\Ksatp{\lambda}$ for the abstract class of $\lambda$-saturated models in $\K$ (ordered by the appropriate restriction of $\lea$).

\subsection{Frames}

Roughly, a \emph{frame} consists of a class of models of the same cardinality together with an abstract notion of nonforking. The idea is that if the frame is ``sufficiently nice'', then it is possible to extend it to cover bigger models as well. This is the approach of Shelah's book \cite{shelahaecbook}, where \emph{good frames} were introduced. There the abstract notion of nonforking is required to satisfy some of the basic properties of forking in a superstable elementary class. We redefine here the definition of a frame (called pre-frame in \cite[III.0.2]{shelahaecbook}, \cite[3.1]{indep-aec-apal}). We give a slightly different definition, as we do not include certain monotonicity axioms as part of the definition. Shelah assumes that nonforking is only defined for a certain class of types he calls the basic types. For generality, we will work in the same setup (except that we define the basic types from the nonforking relation), but the reader will not lose much by pretending that all types are basic (In Shelah's terminology, that the frame is \emph{type-full}). Note that any weakly successful good frame can be extended to a type-full one, see \cite[III.9.6]{shelahaecbook}.

\begin{defin}
  Let $\lambda$ be an infinite cardinal, and $\alpha \le \lambda^+$ be a non-zero cardinal. A \emph{$(<\alpha, \lambda)$-frame} consists of a pair $\s = (\K, \nf)$, where:

  \begin{enumerate}
  \item $\K$ is an abstract class with $\K = \K_\lambda$.
  \item $\nf$ is a $4$-ary relation on pairs $(\ba, M_0, M, N)$, where $M_0 \lea M \lea N$ and $\ba \in \fct{<\alpha}{N}$. We write $\nfs{M_0}{\ba}{M}{N}$ instead of $\nf (\ba, M_0, M, N)$.
  \item $\nf$ respects $\K$-embeddings: if $f: N \rightarrow N'$ is a $\K$-embedding and $M_0 \lea M \lea N$, $\ba \in \fct{<\alpha}{N}$, then $\nfs{M_0}{\ba}{M}{N}$ if and only if $\nfs{f[M_0]}{f (\ba)}{f[M]}{N'}$.
  \end{enumerate}

  We may write $\K_{\s} = (K_{\s}, \leap{\s})$ for $\K$ and $\nf_{\s}$ for $\nf$. We say that $\s$ is \emph{on $\K^\ast$} if $\K_{\s} = \K^\ast$. A $(\le \beta, \lambda)$-frame (for $\beta \le \lambda$) is a $(<\beta^+, \lambda)$-frame. A \emph{$\lambda$-frame} is a $(\le 1, \lambda)$-frame.
\end{defin}
\begin{defin}\label{extend-def}
  We say a $(<\alpha, \lambda)$-frame $\ts$ \emph{extends} a $(<\beta, \lambda)$-frame $\s$ if $\beta \le \alpha$, $\K_{\s} = \K_{\ts}$, and for $M_0 \leap{\s} M \leap{\s} N$ and $\ba \in \fct{<\beta}{N}$, $\nf_{\ts} (M_0, \ba, M, N)$ if and only if $\nf_{\s} (M_0, \ba, M, N)$.
\end{defin}

Since $\nf$ respects $\K$-embeddings, it respects types. Therefore we can define:

\begin{defin}
  Let $\s$ be a $(<\alpha,\lambda)$-frame. For $M_0 \leap{\s} M$ and $p \in \gS^{<\alpha} (M)$, we say that $p$ \emph{does not $\s$-fork over $M_0$} if $\nfs{M_0}{\ba}{M}{N}$ whenever $p = \gtp (\ba / M; N)$. When $\s$ is clear from context, we omit it and just say that $p$ \emph{does not fork over $M_0$}.
\end{defin}

We will call a type \emph{basic} if it interacts with the nonforking relation, i.e.\ if it does not fork over a $\K$-substructure of its base. Note that basic types are respected by $\K$-embeddings (by the definition of a frame).

\begin{defin}
  Let $\s$ be a $(<\alpha, \lambda)$-frame. For $M \in \K_{\s}$, a type $p \in \gS^{<\alpha} (M)$ will be called \emph{$\s$-basic} (or just \emph{basic} when $\s$ is clear from context) if there exists $M_0 \leap{\s} M$ such that $p$ does not $\s$-fork over $M_0$. We say that $\s$ is \emph{type-full} if for any $M \in \K_{\s}$, every type in $\gS^{<\alpha} (M)$ is basic.
\end{defin}

We will often consider frames whose underlying class is categorical:

\begin{defin}\label{categ-frame-def}
  We say that a $(<\alpha, \lambda)$-frame $\s$ is \emph{categorical} if $\K_{\s}$ is categorical (i.e.\ it contains a single model up to isomorphism).
\end{defin}

We will consider the following properties that forking may have in a frame:

\begin{defin}
  Let $\s$ be a $(<\alpha, \lambda)$-frame.

  \begin{enumerate}
  \item $\s$ has \emph{non-order} if whenever $M_0 \leap{\s} M \leap{\s} N$, $\ba, \bb \in \fct{<\alpha}{M}$ and $A := \ran  (\ba) = \ran (\bb)$, then $\nfs{M_0}{\ba}{M}{N}$ if and only if $\nfs{M_0}{\bb}{M}{N}$. In this case we will write $\nfs{M_0}{A}{M}{N}$.
  \item $\s$ has \emph{monotonicity} if whenever $M_0 \leap{\s} M_0' \leap{\s} M \leap{\s} N \leap{\s} N'$, $\ba \in \fct{<\alpha}{N'}$, $I \subseteq \dom (\ba)$, and $\nfs{M_0}{\ba}{N}{N'}$, we have that $\nfs{M_0'}{\ba \rest I}{M}{N'}$.
  \item $\s$ has \emph{density of basic types} if whenever $M \ltap{\s} N$ and $\beta < \alpha$, there exists $\ba \in \fct{\beta}{N \backslash M}$ such that $\gtp (\ba / M; N)$ is basic.
  \item $\s$ has \emph{disjointness} if $\nfs{M_0}{\ba}{M}{N}$ and $\ba \in \fct{<\alpha}{M}$ imply $\ba \in \fct{<\alpha}{M_0}$.
  \item $\s$ has \emph{existence} if whenever $M \leap{\s} N$, any basic $p \in \gS^{<\alpha} (M)$ has a nonforking extension to $\gS^{<\alpha} (N)$.
  \item $\s$ has \emph{uniqueness} if whenever $M \leap{\s} N$, if $p, q \in \gS^{<\alpha} (N)$ both do not fork over $M$ and $p \rest M = q \rest M$, then $p = q$.
  \item $\s$ has \emph{local character} if for any $\beta \le \min (\alpha, \lambda)$, any limit ordinal $\delta < \lambda^+$ with $\cf{\delta} \ge \beta$, any $\leap{\s}$-increasing continuous chain $\seq{M_i : i \le \delta}$, and any basic $p \in \gS^{<\beta} (M_\delta)$, there exists $i < \delta$ such that $p$ does not fork over $M_i$.
  \item $\s$ has \emph{symmetry} if the following are equivalent for any $M \leap{\s} N$, $\ba, \bb \in \fct{<\alpha}{N}$ such that $\gtp (\ba / M; N)$ and $\gtp (\bb / M; N)$ are both basic:
    
    \begin{enumerate}
    \item There exists $M_{\ba}, N_{\ba}$ such that $N \leap{\s} N_{\ba}$, $M \leap{\s} M_{\ba} \leap{\s} N_{\ba}$, $\ba \in \fct{<\alpha}{M_{\ba}}$, and $\gtp (\bb / M_{\ba}; N_{\ba})$ does not fork over $M$.
    \item There exists $M_{\bb}, N_{\bb}$ such that $N \leap{\s} N_{\bb}$, $M \leap{\s} M_{\bb} \leap{\s} N_{\bb}$, $\bb \in \fct{<\alpha}{M_{\bb}}$, and $\gtp (\ba / M_{\bb}; N_{\bb})$ does not fork over $M$.
    \end{enumerate}
  \item When $\alpha = \lambda^+$, $\s$ has \emph{long transitivity} if it has non-order and whenever $\gamma < \lambda^+$ is an ordinal (not necessarily limit), $\seq{M_i : i \le \gamma}$, $\seq{N_i : i \le \gamma}$ are $\leap{\s}$-increasing continuous, and $\nfs{M_i}{N_i}{M_{i + 1}}{N_{i + 1}}$ for all $i < \gamma$, we have that $\nfs{M_0}{N_0}{M_\gamma}{N_\gamma}$.
    \end{enumerate}
\end{defin}

Shelah's definition of a good frame (for types of length one) says that a frame must have all the properties above and its underlying class must be ``reasonable'' \cite[II.2.1]{shelahaecbook}. The prototypical example is the class of models of cardinality $\lambda$ of a superstable elementary class which is stable in $\lambda$. Taking this class with nonforking gives a good $\lambda$-frame (even a good $(\le \lambda, \lambda)$-frame). We use the definition from \cite[2.1.1]{jrsh875} (we omit the continuity property since it follows, see \cite[2.1.4]{jrsh875}). We add the long transitivity property from \cite[II.6.1]{shelahaecbook} when the types have length $\lambda$.

\begin{defin}
  We say a $(<\alpha, \lambda)$-frame $\s$ is \emph{good} if:

  \begin{enumerate}
  \item $\K_{\s}$ is an AEC in $\lambda$ (see the end of Definition \ref{aec-def}), and $\K_\s \neq \emptyset$. Moreover $\K_{\s}$ is stable in $\lambda$, has amalgamation, joint embedding, and no maximal models.
  \item $\s$ has non-order, monotonicity, density of basic types, disjointness, existence, uniqueness, local character, symmetry, and (when $\alpha = \lambda^+$) long transitivity.
  \end{enumerate}
\end{defin}
\begin{remark}
  We also call the AEC generated by $\K_{\s}$ (see Remark \ref{gen-rmk}) the \emph{AEC generated by $\s$}.
\end{remark}

We will use the following conjugation property of good frames at a crucial point in the proof of Theorem \ref{main-thm}:

\begin{fact}[III.1.21 in \cite{shelahaecbook}]\label{conj-fact}
  Let $\s$ be a categorical good $\lambda$-frame (see Definition \ref{categ-frame-def}). Let $M \leap{\s} N$ and let $p \in \gS (N)$. If $p$ does not fork over $M$, then $p$ and $p \rest M$ are conjugate. That is, there exists an isomorphism $f: N \cong M$ such that $f (p) = p \rest M$.
\end{fact}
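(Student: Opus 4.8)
The plan is to reduce everything to the homogeneity of limit models. If $M = N$ the identity works, so assume $M \ltap{\s} N$. Categoricity provides \emph{some} isomorphism $N \cong M$, but it need not respect $p$, and the whole content is to correct this. For $R \leap{\s} P$ in $\K_\s$ call $P$ a \emph{limit over $R$} if $P = \bigcup_{i<\omega} R_i$ for a $\leap{\s}$-increasing chain $\seq{R_i : i<\omega}$ with $R_0 = R$ and $R_{i+1}$ universal over $R_i$; such $P$ exist (by stability and amalgamation in $\lambda$), any two are isomorphic over $R$ by a straightforward back-and-forth, and — transporting the defining chain and then applying this uniqueness — every automorphism of $R$ extends to an automorphism of any limit over $R$.

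First I would observe that the unique model of $\K_\s$ is a limit over a proper $\leap{\s}$-submodel: using ``no maximal models'' and the chain axioms of the AEC in $\lambda$ one builds a strictly $\leap{\s}$-increasing chain $\seq{M_i : i\le\omega}$ with $M_\omega \in \K_\s$, and by categoricity $M_\omega \cong M$, so transporting shows $M$ is a limit over a proper submodel. Next, $q := p\rest M$ is basic (apply monotonicity to ``$p$ does not fork over $M$''), and for any isomorphism $f_0 : N\cong M$ the type $f_0(p)$ is basic and does not fork over $f_0[M] \ltap{\s} M$ (since $\nf$ respects $\K$-embeddings). Applying local character to a chain witnessing that $M$ is a limit over a proper submodel, I get a proper $M^\ast \leap{\s} M$ such that $M$ is a limit over $M^\ast$ and \emph{both} $q$ and $f_0(p)$ do not fork over $M^\ast$.

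It then suffices to produce $\alpha \in \Aut(M)$ with $\alpha(f_0(p)) = q$, for then $f := \alpha \circ f_0 : N \cong M$ satisfies $f(p) = q = p\rest M$. Since $f_0(p)$ and $q$ are the unique nonforking extensions to $M$ of their restrictions to $M^\ast$ (uniqueness of forking), and since every automorphism of $M^\ast$ extends to one of the limit $M$ over $M^\ast$, this reduces to finding $\alpha_0 \in \Aut(M^\ast)$ taking $f_0(p)\rest M^\ast$ to $q\rest M^\ast$. This last step is the main obstacle, and it is genuinely delicate: arbitrary basic types over $M^\ast$ need not be conjugate, so one must use that both types in play descend from $p$ (note $f_0$ carries $p\rest M$ onto $f_0(p)\rest f_0[M]$, which links the two conjugacy classes). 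The natural way to finish is a back-and-forth over $M^\ast$ between two limit models over $M^\ast$, each carrying a realization of the relevant restriction, with the stepwise commitments forced by uniqueness of nonforking — or, equivalently, to iterate the reduction of the previous paragraph, noting that each round either terminates with the two restrictions already equal (so that $f_0$ itself works) or passes to a strictly smaller nonforking base, and then arguing that the process terminates. I expect keeping the submodels and the nonforking data coherent through the $\omega$ steps to be the one real difficulty; the rest — the chain axioms, amalgamation, monotonicity, and uniqueness of limit models of a fixed cofinality — is routine.
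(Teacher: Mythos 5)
The paper does not actually prove this statement---it is imported verbatim from Shelah's book---so I am judging your argument on its own terms. Your setup is fine: limit models of cofinality $\omega$ exist, are unique over their base, and every automorphism of the base extends to one of the limit; $M$ is a limit over a proper $M^\ast \ltap{\s} M$ over which both $q := p \rest M$ and $f_0(p)$ do not fork; and an automorphism of $M^\ast$ carrying $f_0(p) \rest M^\ast$ to $q \rest M^\ast$ would finish by uniqueness of nonforking extensions. The gap is the step you yourself flag: producing that automorphism of $M^\ast$ is \emph{exactly the original problem again}, and neither of your two suggestions closes it. Iterating the reduction passes to a strictly smaller base, but $M \gtap{\s} M^\ast \gtap{\s} M^{\ast\ast} \gtap{\s} \cdots$ can decrease forever, so there is no well-founded descent and no reason the process ever terminates with the two restrictions equal. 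The back-and-forth between two limit models over $M^\ast$ ``each carrying a realization'' also fails as stated: to keep the stepwise commitments consistent with uniqueness of nonforking you already need the two types to agree over a common nonforking base, which is what is being proved. The link you invoke ($f_0$ carries $p \rest M$ to $f_0(p) \rest f_0[M]$) does not help, because $f_0[M]$ is an arbitrary submodel of $M$ bearing no relation to $M^\ast$.

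The wrong turn is starting from an \emph{arbitrary} isomorphism $f_0 : N \cong M$ and trying to correct it afterwards; the isomorphism must be built to fix a common nonforking base pointwise from the outset. Concretely: first reduce to the case where $N$ is a $(\lambda,\omega)$-limit over $M$ and $p$ is the nonforking extension of $q$ to $N$ (pass to a limit extension $N'$ of $N$ and the nonforking extension $p'$ of $p$; note $N'$ is also a limit over $M$, and apply the special case twice, to $(N, N', p')$ and to $(M, N', p')$). In the special case, use categoricity and local character exactly as you did to write $M$ as a $(\lambda,\omega)$-limit over some $M^-$ with $q$ not forking over $M^-$. Then $N$ is a limit over $M^-$ as well (concatenate the two chains), so the back-and-forth proof of uniqueness of limit models (Fact~\ref{uq-lim}) yields $f : N \cong M$ with $f \rest M^- = \id_{M^-}$. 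By transitivity of nonforking, $p$ does not fork over $M^-$, hence $f(p)$ and $q$ are both nonforking over $M^-$ and have the same restriction $q \rest M^-$ there, so $f(p) = q$ by uniqueness. No automorphism correction is needed, and the infinite regress disappears.
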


\begin{remark}
  The results of this paper also carry over (with essentially the same proofs) in the slightly weaker framework of semi-good $\lambda$-frames introduced in \cite{jrsh875}, where only ``almost stability'' (i.e.\ $|\gS (M)| \le \lambda^+$ for all $M \in \K_{\lambda}$) and the conjugation property are assumed. For example, in Corollary \ref{main-cor-gch} we can assume only that there is a semi-good $\lambda$-frame on $\K_{\lambda}$ with conjugation (but we still should assume there is a good $\lambda^+$-frame on $\Ksatp{\lambda^+}_{\lambda^+}$).
\end{remark}

Several sufficient conditions for the existence of a good frame are known. Assuming GCH, Shelah showed that the existence of a good $\lambda$-frame follows from categoricity in $\lambda$, $\lambda^+$, and a medium number of models in $\lambda^{++}$ (see Fact \ref{shelah-categ-frame}). Good frames can also be built using a small amount of tameness  that follows from amalgamation, no maximal models, and categoricity in a sufficiently high cardinal (see Section \ref{tameness-sec} and Fact \ref{tameness-categ}).

Note that on a categorical good $\lambda$-frame, there is only one possible notion of nonforking with the required properties. In fact, nonforking can be given an explicit description, see \cite[\S 9]{indep-aec-apal}. We will use this without comments:

\begin{fact}[Canonicity of categorical good frames]\label{canon-fact}
  If $\s$ and $\ts$ are categorical good $\lambda$-frames with the same basic types and $\K_{\s} = \K_{\ts}$, then $\s = \ts$.
\end{fact}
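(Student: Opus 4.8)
Since the hypotheses are symmetric in $\s$ and $\ts$, it suffices to prove one inclusion, say that whenever $\nfs{M_0}{\ba}{M}{N}$ holds for $\s$ it also holds for $\ts$. Here ``basic'' is unambiguous because $\s$ and $\ts$ have the same basic types, and note that $\nfs{M_0}{\ba}{M}{N}$ already forces $\gtp (\ba / M; N)$ to be basic (for both frames). The plan is to exhibit a ``frame-free'' description of nonforking — a relation defined purely from the AEC $\K := \K_{\s} = \K_{\ts}$ together with its (common) class of basic types — with which the nonforking relation of \emph{any} categorical good $\lambda$-frame on $\K$ must coincide. Canonicity is then immediate, since the description mentions neither $\s$ nor $\ts$.

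The relation I would use is non-$\lambda$-splitting over a submodel witnessing that the base is a limit model (``$\lambda$-splitting'' in the usual syntax-free sense: $p \in \gS(M)$ $\lambda$-splits over $M_0^-$ if some isomorphism between two size-$\lambda$ submodels of $M$ fixing $M_0^-$ moves $p$). This is where categoricity is used: since $\K$ is categorical in $\lambda$, stable in $\lambda$, and has amalgamation, joint embedding and no maximal models, its unique model is a limit model, so \emph{every} $M_0 \in \K$ is a limit model and hence has some $M_0^- \leap{\s} M_0$ over which $M_0$ is universal. The claim to establish is: for $M_0 \leap{\s} M$ and $p \in \gS (M)$, $p$ does not $\s$-fork over $M_0$ if and only if $p \rest M_0$ is basic and $p$ does not $\lambda$-split over every $M_0^- \leap{\s} M_0$ over which $M_0$ is universal. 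The right-hand condition refers only to $\K$ and the basic types, so applying the equivalence to both $\s$ and $\ts$ gives $\nf_{\s} = \nf_{\ts}$ (the general $4$-ary statement reduces to this one because both frames respect $\K$-embeddings).

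For the direction ``$\Leftarrow$'': let $\bar p \in \gS (M)$ be the $\s$-nonforking extension of $q := p \rest M_0$, which exists since $q$ is basic (by the existence property of the frame) and is itself basic. By the ``$\Rightarrow$'' direction applied to $\bar p$, it too does not $\lambda$-split over $M_0^-$. Now $p$ and $\bar p$ are two extensions of $q$ to $\gS (M)$, neither $\lambda$-splitting over $M_0^-$, and $M_0$ is universal over $M_0^-$; by the uniqueness of non-$\lambda$-splitting extensions to universal extensions (established by Grossberg and VanDieren), $p = \bar p$, so $p$ does not $\s$-fork over $M_0$. The direction ``$\Rightarrow$'' is the expected fact that in a good $\lambda$-frame a type not forking over $M_0$ cannot $\lambda$-split over a submodel over which $M_0$ is universal; one proves it from uniqueness of nonforking extensions inside the frame, using universality of $M_0$ over $M_0^-$ to transport size-$\lambda$ submodels of $M$ into $M_0$.

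I expect ``$\Rightarrow$'' to be the main obstacle: monotonicity of forking only lets one \emph{enlarge} the base, never shrink it, so passing from ``not forking over $M_0$'' to ``not $\lambda$-splitting over the smaller $M_0^-$'' cannot be purely formal and must genuinely exploit that $M_0$ is universal over $M_0^-$ (together with uniqueness and respect for $\K$-embeddings). An alternative, closer to Shelah's original treatment, is to run the comparison through the conjugation property of Fact~\ref{conj-fact} — every nonforking type is conjugate to its restriction — together with a resolution of $M$ as an increasing continuous union over $M_0$ and the local character axiom; but that route relies on the very same point, namely that categoricity makes every model a limit model and thereby makes the splitting/conjugation machinery applicable.
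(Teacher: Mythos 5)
The paper does not actually prove this statement: it is quoted as a Fact from \cite[\S 9]{indep-aec-apal}, where canonicity is obtained essentially along the lines you propose --- an explicit, frame-free description of nonforking in terms of $\lambda$-splitting, with categoricity used to make every model a limit model. So your strategy matches the source in spirit, but your characterization is misstated and the error propagates into your argument.

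The problem is the quantifier on $M_0^-$. You claim that $p$ not $\s$-forking over $M_0$ implies $p$ does not $\lambda$-split over \emph{every} $M_0^- \leap{\s} M_0$ over which $M_0$ is universal. This is false: nonforking is only monotone when the base is \emph{enlarged}, and a type can fail to fork over $M_0$ while splitting badly over a proper submodel. A first-order example living inside a categorical good $\aleph_0$-frame: in the theory of an equivalence relation with infinitely many infinite classes, take $c \in M_0$ in a class not represented in $M_0^-$ and let $p = \gtp(a/M_0)$ put $a$ in the class of $c$; then $p$ does not fork over its own domain $M_0$, but it $\lambda$-splits over $M_0^-$ (an isomorphism over $M_0^-$ can move $c$ into a different class), even though $M_0$ may be taken universal over $M_0^-$. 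The true statement is existential: there \emph{exists} some $M_0^- \leap{\s} M_0$ with $M_0$ universal over $M_0^-$ such that $p$ does not $\lambda$-split over $M_0^-$, and its proof is not the ``transport submodels'' argument you sketch but rather: write $M_0 = \bigcup_{i<\omega} M_i$ with $M_{i+1}$ universal over $M_i$ (possible since categoricity makes $M_0$ a limit model), use \emph{local character} plus transitivity to find $i$ with $p$ not forking over $M_i$, and only then apply uniqueness to get non-splitting over $M_i$. Your ``$\Leftarrow$'' direction inherits the gap at the step ``by the $\Rightarrow$ direction applied to $\bar p$, it too does not $\lambda$-split over $M_0^-$'': there is no reason the nonforking extension $\bar p$ should not split over the particular $M_0^-$ you fixed. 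The repair is to run local character for \emph{both} frames along a single resolution of $M_0$, obtaining one index $i$ such that $p \rest M_0$ does not fork over $M_i$ in either sense; then the two competing extensions both fail to $\lambda$-split over $M_i$ and agree on $M_{i+1}$, which is universal over $M_i$, so the Grossberg--VanDieren uniqueness lemma for non-splitting extensions finishes. This is in effect the ``alternative route'' you mention in your last paragraph; it is not an alternative but the necessary ingredient.
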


\subsection{Superlimits}

As has been done in several recent papers (e.g.\ \cite{jrsh875, indep-aec-apal, downward-categ-tame-apal}), we have dropped the requirement that $\K$ has a superlimit in $\lambda$ from Shelah's definition of a good frame:

\begin{defin}[I.3.3 in \cite{shelahaecbook}]\label{sl-def}
  Let $\K$ be an AEC and let $\lambda \ge \LS (\K)$. A model $N$ is \emph{superlimit in $\lambda$} if:

  \begin{enumerate}
  \item $N \in \K_\lambda$ and $N$ has a proper extension.
  \item $N$ is universal: any $M \in \K_\lambda$ $\K$-embeds into $N$.
  \item For any limit ordinal $\delta < \lambda^+$ and any increasing chain $\seq{N_i : i < \delta}$, if $N \cong N_i$ for all $i < \delta$, then $N \cong \bigcup_{i < \delta} N_i$.
  \end{enumerate}

  We say that $M$ is \emph{superlimit} if it is superlimit in $\|M\|$.
\end{defin}

Again, for a prototypical example consider a superstable elementary class which is stable in a cardinal $\lambda$ and let $M$ be a saturated model of cardinality $\lambda$. Then because unions of chains of $\lambda$-saturated are $\lambda$-saturated, $M$ is superlimit in $\lambda$. In fact, an elementary class has a superlimit in some high-enough cardinal if and only if it is superstable \cite[3.1]{sh868}.

There are no known examples of good $\lambda$-frames that do \emph{not} have a superlimit in $\lambda$. In fact most constructions of good $\lambda$-frames give one, see for example \cite[6.4]{vv-symmetry-transfer-afml}. When a good $\lambda$-frame $\s$ has a superlimit, we can restrict $\s$ to the AEC generated by this superlimit (i.e\ the unique AEC $\K^\ast$ such that $\K_\lambda$ consists of isomorphic copies of the superlimit and is ordered by the appropriate restriction of $\lea$) and obtain a new good frame that will be \emph{categorical} in the sense of Definition \ref{categ-frame-def}. Thus in this paper we will often assume that the good frame is categorical to start with.

When one has a good $\lambda$-frame, it is natural to ask whether the frame can be extended to a good $\lambda^+$-frame. It turns out that the behavior of the saturated model in $\lambda^+$ can be crucial for this purpose. Note that amalgamation in $\lambda$ and stability in $\lambda$ indeed imply that there is a unique, universal, saturated model in $\lambda^+$ (amalgamation in $\lambda^+$ is not needed for this purpose). It is also known that there are no maximal models in $\lambda^+$ (see \cite[II.4.13]{shelahaecbook} or \cite[3.1.9]{jrsh875}). A key property is whether union of chains of saturated models of cardinality $\lambda^+$ are saturated. In fact, it is easy to see that this is equivalent to the existence of a superlimit in $\lambda^+$. We will use it without comments and leave the proof to the reader:

\begin{fact}\label{sl-easy}
  Let $\s$ be a good $\lambda$-frame generating an AEC $\K$. The following are equivalent:

  \begin{enumerate}
  \item $\K$ has a superlimit in $\lambda^+$.
  \item The saturated model in $\K$ of cardinality $\lambda^+$ is superlimit.
  \item For any limit $\delta < \lambda^{++}$ and any increasing chain $\seq{M_i : i < \delta}$ of saturated model in $\K_{\lambda^+}$, $\bigcup_{i < \delta} M_i$ is saturated.
  \end{enumerate}
\end{fact}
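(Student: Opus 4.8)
The plan is to prove the cycle of implications $(2) \Rightarrow (1) \Rightarrow (3) \Rightarrow (2)$. Throughout I will use the facts recalled just before the statement: amalgamation in $\lambda$ and stability in $\lambda$ (both part of the good $\lambda$-frame $\s$) yield a \emph{unique, universal, saturated} model $N^\ast$ in $\K_{\lambda^+}$, and $\K_{\lambda^+}$ has no maximal models. The implication $(2) \Rightarrow (1)$ is immediate from Definition \ref{sl-def}.

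For $(3) \Rightarrow (2)$: I take the candidate superlimit to be the saturated model $N^\ast \in \K_{\lambda^+}$ and verify the three clauses of Definition \ref{sl-def}, with $\lambda$ replaced by $\lambda^+$. Clause (1) is ``no maximal models in $\lambda^+$'' and clause (2) is universality of $N^\ast$. For clause (3), given a limit $\delta < \lambda^{++}$ and an increasing chain $\seq{N_i : i < \delta}$ with each $N_i \cong N^\ast$, every $N_i$ is saturated, so $\bigcup_{i < \delta} N_i$ is saturated by hypothesis $(3)$; since $|\delta| \le \lambda^+$ its cardinality is exactly $\lambda^+$, and hence it is isomorphic to the unique saturated model $N^\ast$.

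The heart of the matter is $(1) \Rightarrow (3)$, and the key claim is that a superlimit model $N^\ast$ in $\lambda^+$ must be saturated. Granting this, $(3)$ follows easily: any increasing chain $\seq{M_i : i < \delta}$ of saturated models in $\K_{\lambda^+}$ with $\delta < \lambda^{++}$ is a chain of copies of $N^\ast$ (uniqueness of the saturated model), so by clause (3) of the superlimit definition its union is a copy of $N^\ast$, hence saturated. To show $N^\ast$ is saturated, I build a $\lea$-increasing continuous chain $\seq{N_i : i \le \lambda^+}$ with $N_0 = N^\ast$ and $N_i \cong N^\ast$ for all $i \le \lambda^+$, arranging by a bookkeeping argument that every type over every $\K$-submodel of cardinality $\le \lambda$ of the final union is realized somewhere in the chain. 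At a limit stage $\delta \le \lambda^+$ I set $N_\delta := \bigcup_{i < \delta} N_i$; this lies in $\K$ by the chain axioms and is a copy of $N^\ast$ by clause (3) of the superlimit definition (using $\delta < \lambda^{++}$), so the induction hypothesis is maintained. At a successor stage, to realize a type $p \in \gS(M)$ over some $M \lea N_i$ with $\|M\| \le \lambda$: amalgamate $N_i$ with a size-$\le\lambda$ extension of $M$ realizing $p$ to get $N_i \lea N_i^\dagger$ with $N_i^\dagger$ of cardinality $\lambda^+$ realizing $p$, then $\K$-embed $N_i^\dagger$ into $N^\ast$ by universality, and finally rename the elements of $N^\ast$ outside the image to produce a copy $N_{i+1} \cong N^\ast$ with $N_i^\dagger \lea N_{i+1}$ (so $N_i \lea N_{i+1}$ and $p$ is realized in $N_{i+1}$). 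Taking $\delta = \lambda^+$ in clause (3) gives $\bigcup_{i < \lambda^+} N_i \cong N^\ast$, and since the construction makes this union saturated, $N^\ast$ is saturated.

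I expect the successor step of this construction to be the only genuine obstacle: one wants $N_{i+1}$ to be simultaneously a $\lea$-extension of $N_i$ realizing a prescribed type and an isomorphic copy of $N^\ast$, and these demands are reconciled by the ``embed into $N^\ast$, then rename'' trick together with the observation (from amalgamation in $\lambda$ and no maximal models in $\lambda^+$) that every type over a small submodel is realized in some $\lea$-extension of cardinality $\lambda^+$. A routine secondary point is organizing the bookkeeping so that it meets every small submodel of the union even though the union is revealed only gradually; this is standard, using $\cf{\lambda^+} > \lambda$ and stability in $\lambda$ to bound the number of tasks appearing at each stage.
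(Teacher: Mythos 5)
Your proof is correct: the cycle $(2)\Rightarrow(1)\Rightarrow(3)\Rightarrow(2)$ works, and the key step---that a superlimit in $\lambda^+$ must be saturated, via the ``realize types, embed into the superlimit by universality, rename'' construction with continuity at limits supplied by clause (3) of Definition \ref{sl-def}---is exactly the standard argument. The paper explicitly leaves the proof of Fact \ref{sl-easy} to the reader, and your write-up (including the routine bookkeeping reduction to a resolution of the union, using $\cf{\lambda^+}>\lambda$ and stability in $\lambda$) fills that gap in the expected way.
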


\subsection{Tameness}\label{tameness-sec}

In an elementary class, types coincides with sets of formulas so are in particular determined by their restrictions to small subsets of their domain. One may be interested in studying AECs where types have a similar behavior. Such a property is called tameness. Tameness was extracted from an argument of Shelah \cite{sh394} and made into a definition by Grossberg and VanDieren \cite{tamenessone} who used it to prove an upward categoricity transfer theorem \cite{tamenesstwo, tamenessthree}. Here we present the definitions we will use, and a few sufficient conditions for tameness (given as motivation but \emph{not} used in this paper). We refer the reader to the survey of Boney and the author \cite{bv-survey-bfo} for more on tame AECs.

\begin{defin}
  For an abstract class $\K$, a class of types $\Gamma$, and an infinite cardinal $\chi$, we say that $\K$ is \emph{$(<\chi)$-tame} for $\Gamma$ if for any $p, q \in \Gamma$ over the same set $B$, $p \rest A = q \rest A$ for all $A \subseteq B$ with $|A| < \chi$ implies that $p = q$. We say that $\K$ is \emph{$\chi$-tame for $\Gamma$} if it is $(<\chi^+)$-tame for $\Gamma$.
\end{defin}

We will use the following variation from \cite[11.6]{baldwinbook09}:

\begin{defin}
  An AEC $\K$ is \emph{$(<\chi, \lambda)$-weakly tame} if $\K$ is $(<\chi)$-tame for the class of types of length one over \emph{saturated} models of cardinality $\lambda$. When we omit the weakly, we mean that $\K$ is $(<\chi)$-tame for the class of types of length one over any model of cardinality $\lambda$. Define similarly variations such as \emph{$(\chi, <\lambda)$-weakly tame}, or \emph{$\chi$-tame} (which means $(\chi, \lambda)$-tame for all $\lambda \ge \chi$).
\end{defin}

A consequence of the compactness theorem is that any elementary class is $(<\aleph_0)$-tame (for any class of types). Boney \cite{tamelc-jsl}, building on work of Makkai and Shelah \cite{makkaishelah} showed that tameness follows from a large cardinal axiom:

\begin{fact}\label{tamelc}
  If $\K$ is an AEC and $\chi > \LS (\K)$ is strongly compact, then $\K$ is $(<\chi)$-tame for any class of types.
\end{fact}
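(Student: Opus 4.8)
The plan is to give $\K$ a first-order flavor via Shelah's presentation theorem and then exploit the ultraproduct closure that a strongly compact cardinal provides, running essentially the argument that shows elementary classes are $(<\aleph_0)$-tame by compactness. So first I would fix, via the presentation theorem \cite{sh88}, a vocabulary $\tau^\ast \supseteq \tau (\K)$ with $|\tau^\ast| \le \LS (\K)$, a first-order $\tau^\ast$-theory $T^\ast$, and a set $\Gamma^\ast$ of $\tau^\ast$-types, each of size $\le \LS (\K) < \chi$, such that $\K$ is (up to the obvious identification) the class of $\tau (\K)$-reducts of models of $T^\ast$ omitting every member of $\Gamma^\ast$, with $\lea$ corresponding to the $\tau^\ast$-substructure relation. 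The essential feature is that all of this bookkeeping data has size $< \chi$.

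Second, I would isolate the closure lemma (due to Boney \cite{tamelc-jsl}): if $\langle M_i : i \in I \rangle$ is a family of members of $\K$ and $U$ is a $\chi$-complete ultrafilter on $I$, then $\prod_{i \in I} M_i / U \in \K$, the diagonal embedding $M \to \prod_{i \in I} M / U$ is a $\K$-embedding, and the ultraproduct of a family of $\K$-embeddings is again a $\K$-embedding. To prove it, expand each $M_i$ to some $M_i^\ast \models T^\ast$ omitting $\Gamma^\ast$. By {\L}o\'s's theorem $\prod_i M_i^\ast / U \models T^\ast$, and since each type in $\Gamma^\ast$ has $< \chi$ formulas, $\chi$-completeness of $U$ forces the ultraproduct to still omit $\Gamma^\ast$: were $[\bar a]_U$ to realize such a type, intersecting the $< \chi$ many $U$-large sets on which its formulas individually hold would yield a single index whose factor realizes that type. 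Taking $\tau (\K)$-reducts (which commute with ultraproducts) yields membership in $\K$, and the embedding statements follow because a $\tau^\ast$-elementary (resp.\ $\tau^\ast$-substructure) embedding of the expansions restricts to a $\K$-embedding of the reducts.

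Third, the main argument. Suppose toward a contradiction that $p = \gtp (\bar b_1 / B; N_1)$ and $q = \gtp (\bar b_2 / B; N_2)$ are distinct while $p \rest A = q \rest A$ for every $A \subseteq B$ with $|A| < \chi$; let $I$ be the set of all such $A$. Since $\chi$ is strongly compact, fix a fine $\chi$-complete ultrafilter $U$ on $I$, where ``fine'' means $\{A \in I : a \in A\} \in U$ for every $a \in B$. For each $A \in I$, unravelling $p \rest A = q \rest A$ yields $N_A \in \K$ and $\K$-embeddings $f_A : N_1 \to N_A$, $g_A : N_2 \to N_A$ with $f_A \rest A = g_A \rest A$ and $f_A (\bar b_1) = g_A (\bar b_2)$. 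Put $N^\ast := \prod_{A \in I} N_A / U \in \K$ and let $F : N_1 \to N^\ast$, $G : N_2 \to N^\ast$ be the maps induced by $\langle f_A \rangle$ and $\langle g_A \rangle$; each factors as a diagonal embedding followed by an ultraproduct of $\K$-embeddings, hence is a $\K$-embedding. Then $F (\bar b_1) = [A \mapsto f_A (\bar b_1)]_U = [A \mapsto g_A (\bar b_2)]_U = G (\bar b_2)$, and for each $a \in B$ fineness gives $\{A \in I : f_A (a) = g_A (a)\} \supseteq \{A \in I : a \in A\} \in U$, whence $F (a) = G (a)$. So $N^\ast$, $F$, $G$ witness $p = q$, a contradiction.

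Finally, on where the real work lies: when $\K$ lacks amalgamation, unravelling $p \rest A = q \rest A$ produces not a single amalgam but a finite chain of amalgamations; one first passes to a $U$-large subset of $I$ on which this chain has a fixed finite length (possible since $U$ is $\aleph_1$-complete and the types are finite-chain objects) and then takes ultraproducts of the whole chains, fineness again ensuring the limiting chain fixes every $a \in B$. But the genuine obstacle is the closure lemma of the second paragraph: one must verify that the presentation theorem's auxiliary vocabulary has size $< \chi$ and that the ``omitting a type'' condition genuinely survives a $\chi$-complete ultraproduct --- this is exactly where the hypothesis $\chi > \LS (\K)$, and not merely $\chi > \aleph_0$, is consumed. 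Everything else is the standard ultraproduct-and-fineness bookkeeping.
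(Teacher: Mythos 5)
The paper does not prove this statement: it is quoted as a known Fact, with the proof deferred to Boney's \emph{Tameness from large cardinal axioms} \cite{tamelc-jsl} (building on Makkai--Shelah \cite{makkaishelah}). Your argument is essentially that reference's proof --- presentation theorem, $\chi$-complete ultraproducts staying in $\K$ because the omitted types have $\le \LS(\K) < \chi$ formulas, and a fine $\chi$-complete ultrafilter on $\mathcal{P}_{<\chi}(B)$ amalgamating the small witnesses --- including the correct handling of the finite zigzag when amalgamation fails, so it is correct and matches the intended source.
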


Recent work of Boney and Unger \cite{lc-tame-pams} show that this can, in a sense, be reversed: the statement ``For every AEC $\K$ there is $\chi$ such that $\K$ is $\chi$-tame'' is \emph{equivalent} to a large cardinal axiom. It is however known that when the AEC is stability-theoretically well-behaved, some amount of tameness automatically holds. This was observed for categoricity in a cardinal of high-enough cofinality by Shelah \cite[II.2.3]{sh394} and later improved to categoricity in any big-enough cardinal by the author \cite[5.7(5)]{categ-saturated-afml}:

\begin{fact}\label{tameness-categ}
  Let $\K$ be an AEC with arbitrarily large models and let $\lambda \ge \ehanf{\LS (\K)}$. If $\K_{<\lambda}$ has amalgamation and no maximal models and $\K$ is categorical in $\lambda$, then there exists $\chi < \ehanf{\LS (\K)}$ such that $\K$ is $(\chi, <\lambda)$-weakly tame.
\end{fact}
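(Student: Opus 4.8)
The plan is to deduce weak tameness from categoricity along the classical two-stage route: first use Ehrenfeucht--Mostowski models to show that $\K$ is stable in every cardinal below $\lambda$ (the only point at which $\lambda \ge \ehanf{\LS (\K)}$ is really used), and then extract $(\LS (\K), <\lambda)$-weak tameness from stability by way of $\mu$-splitting.

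\emph{Stability below $\lambda$.} By the presentation theorem, $\K$ has Ehrenfeucht--Mostowski models over linear orders of all cardinalities. A standard argument then shows that, under our hypotheses (amalgamation and no maximal models in $\K_{<\lambda}$, arbitrarily large models, categoricity in $\lambda$), $\K$ is stable in every $\mu$ with $\LS (\K) \le \mu < \lambda$: instability somewhere in that interval would, through the order property and a Hanf-number computation---this is where $\lambda \ge \ehanf{\LS (\K)}$ enters---force more than one model of cardinality $\lambda$, contradicting categoricity; see \cite{sh394, baldwinbook09}. Combined with amalgamation and no maximal models below $\lambda$, this produces for each $\mu \in (\LS (\K), \lambda)$ a unique saturated model in $\K_\mu$, and---by \cite[II.1.16]{shelahaecbook}, recalled above---a universal extension inside $\K_\mu$ of each member of $\K_\mu$ (for $\LS (\K) \le \mu < \lambda$).

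\emph{From stability to weak tameness.} Recall that, for $N \lea M$ with $\|N\| = \chi$, a type $p \in \gS (M)$ \emph{$\chi$-splits over $N$} if there are $N \lea N_1, N_2 \lea M$ of cardinality $\chi$ and an isomorphism $f : N_1 \cong N_2$ fixing $N$ with $f (p \rest N_1) \ne p \rest N_2$. Stability in $\chi$ together with amalgamation yields the usual facts about $\chi$-splitting (see \cite{sh394} or \cite{tamenessone}): non-$\chi$-splitting is preserved under enlarging the base; every type over a $\chi^+$-saturated model does not $\chi$-split over some submodel of cardinality $\chi$; and if $N \lea N' \lea M$ with $N'$ universal over $N$ and $\|N\| = \chi$, then two types in $\gS (M)$ that do not $\chi$-split over $N$ and agree on $N'$ are equal. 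Now put $\chi := \LS (\K)$ and fix $\mu$ with $\LS (\K) < \mu < \lambda$ (smaller $\mu$ being vacuous), let $M$ be the saturated model in $\K_\mu$ and let $p, q \in \gS (M)$ agree on every $A \subseteq |M|$ with $|A| \le \chi$. Since $M$ is $\mu$-saturated (so $\chi^+$-saturated, as $\chi^+ \le \mu$), $p$ and $q$ do not $\chi$-split over some $N_p, N_q \lea M$ of cardinality $\chi$; by the Löwenheim--Skolem axiom and coherence choose $N^* \lea M$ of cardinality $\chi$ with $N_p, N_q \lea N^*$, so neither $p$ nor $q$ $\chi$-splits over $N^*$. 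As $M$ is saturated of cardinality $> \chi$ it is $\chi^+$-model-homogeneous, so a universal extension of $N^*$ of cardinality $\chi$ (which exists by stability in $\chi$) $\K$-embeds into $M$ over $N^*$; let $N'$ be its image. Then $\|N'\| = \chi$, so $p \rest N' = q \rest N'$ by hypothesis, whence $p = q$ by the uniqueness fact. Thus types over saturated models of cardinality $< \lambda$ are determined by their restrictions of cardinality $\le \LS (\K)$, i.e.\ $\K$ is $(\LS (\K), <\lambda)$-weakly tame; in particular any $\chi < \ehanf{\LS (\K)}$ works, as required.

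\emph{The main obstacle} is the stability transfer of the first stage: everything else is a routine assembly of well-known consequences of stability and amalgamation, and the hypothesis $\lambda \ge \ehanf{\LS (\K)}$ is needed exactly to make the Ehrenfeucht--Mostowski/order-property argument convert instability below $\lambda$ into a failure of categoricity at $\lambda$. In the second stage the only thing to watch is that the auxiliary models---the non-$\chi$-splitting bases and their universal extensions---can be located inside the highly saturated model $M$, which is immediate from model-homogeneity.
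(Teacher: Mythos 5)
The paper does not prove this statement; it is quoted as a known Fact from \cite[II.2.3]{sh394} and \cite[5.7(5)]{categ-saturated-afml}. Measured against those proofs, your second stage has a genuine gap. The step ``if $N^* \lea N' \lea M$ with $N'$ universal over $N^*$, $\|N^*\| = \|N'\| = \chi$, and $p, q \in \gS(M)$ do not $\chi$-split over $N^*$ and agree on $N'$, then $p = q$'' is only a theorem when $\|M\| = \chi$: the proof embeds the whole domain $M$ into $N'$ over $N^*$ and transfers the types along that embedding. When $\|M\| = \mu > \chi$, the argument has to pass through restrictions $p \rest M'$, $q \rest M'$ for submodels $M'$ of cardinality $\chi$, and one needs to know that $p \neq q$ is already witnessed by some such $M'$ --- which is exactly $(\chi,\mu)$-tameness, the statement being proved. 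Worse, in your setting $p$ and $q$ are assumed to agree on \emph{every} subset of $M$ of cardinality $\le \chi$, so every restriction that the splitting machinery can inspect is the same for $p$ and for $q$; no argument phrased purely in terms of $\chi$-splitting of small restrictions can separate them. This is why uniqueness/extension of nonsplitting extensions across cardinals appears as a consequence of tameness in Grossberg--VanDieren, not as a tool for proving it. A symptom of the circularity is that your argument would yield $(\LS(\K), <\lambda)$-weak tameness, which is strictly stronger than the stated conclusion (some $\chi < \ehanf{\LS(\K)}$) and is not known to follow from these hypotheses.

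The actual proofs work by nonstructure rather than by splitting: one shows that if $(\chi, \mu)$-weak tameness fails for every $\chi < \ehanf{\LS(\K)}$, then from a pair of distinct types over a saturated model agreeing on all small submodels one extracts longer and longer order-property-like configurations; the Hanf number computation (Morley's omitting-types method applied to the presentation theorem) then produces such configurations of unbounded length, hence the order property, hence instability --- contradicting the stability below $\lambda$ that categoricity gives. So $\lambda \ge \ehanf{\LS(\K)}$ is used for the tameness extraction, not (as you assert) for the stability transfer: stability in every $\mu \in [\LS(\K), \lambda)$ already follows from categoricity in any $\lambda > \LS(\K)$ by the Ehrenfeucht--Mostowski argument of \cite[I.1.7]{sh394}, and the threshold $\ehanf{\LS(\K)}$ is precisely the Hanf number entering the omitting-types step. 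Your first stage is essentially correct, but it is the easy half; the part you dismiss as ``a routine assembly of well-known consequences of stability'' is where the theorem actually lives.
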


Very relevant to this paper is the fact that tameness allows one to transfer good frames up \cite[6.9]{tame-frames-revisited-jsl}:

\begin{fact}\label{ext-frame}
  Let $\s$ be a good $\lambda$-frame generating an AEC $\K$. If $\K$ is $(\lambda, \lambda^+)$-tame and has amalgamation in $\lambda^+$, then there is a good $\lambda^+$-frame $\ts$ on $\K_{\lambda^+}$. Moreover $\ts$-nonforking can be described in terms of $\s$-nonforking as follows: for $M \leap{\ts} N$, $p \in \gS (N)$ does not $\ts$-fork over $M$ if and only if there exists $M_0 \in \PKp{\lambda}{M}$ so that for all $N_0 \in \PKp{\lambda}{N}$ with $M_0 \lea N_0$, $p \rest N_0$ does not $\s$-fork over $M_0$.
\end{fact}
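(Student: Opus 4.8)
The plan is to \emph{build} the frame $\ts$ from the formula in the ``moreover'' clause and then verify it is a good $\lambda^+$-frame; once $\ts$ is defined this way, the description of $\ts$-nonforking in terms of $\s$-nonforking holds by construction. Explicitly, put $\K_{\ts} := \K_{\lambda^+}$ (the size-$\lambda^+$ models of the AEC generated by $\s$), ordered by the restriction of $\lea$, and for $M_0 \leap{\ts} M \leap{\ts} N$ and a singleton $a \in N$ declare $\nfs{M_0}{a}{M}{N}$ to hold exactly when some $A \in \PKp{\lambda}(M_0)$ controls $\gtp (a/M;N)$, in the sense that $\gtp (a/B; B')$ does not $\s$-fork over $A$ for all $B \in \PKp{\lambda}(M)$ and $B' \in \PKp{\lambda}(N)$ with $A \leap{\s} B \leap{\s} B'$ and $a \in B'$. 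One checks this respects $\K$-embeddings, so $\ts$ is a $\lambda^+$-frame. The running technique throughout is to pass freely between a size-$\lambda^+$ model and its $\lea$-increasing continuous resolutions by size-$\lambda$ models, recovering statements about the big model from statements about the resolution via $(\lambda,\lambda^+)$-tameness and pushing the axioms of $\s$ through one level at a time.

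First I would check $\K_{\lambda^+}$ is well-behaved: it is an AEC in $\lambda^+$ and nonempty because $\K$ is; it has amalgamation in $\lambda^+$ by hypothesis; joint embedding and no maximal models in $\lambda^+$ follow from the $\lambda$-level axioms (for the latter, \cite[II.4.13]{shelahaecbook}). The substantial point is stability in $\lambda^+$. Fixing $M \in \K_{\lambda^+}$ and a resolution $\seq{M_i : i < \lambda^+}$ by size-$\lambda$ models, local character of $\s$ attaches to each limit $i$ a base $M_{k_i} \leap{\s} M_i$ with $k_i < i$ over which the restriction of a given type to $M_i$ does not $\s$-fork; Fodor's lemma freezes $k_i = i^\ast$ on a stationary set, $\s$-monotonicity spreads this to all $i \ge i^\ast$, and then $\s$-uniqueness together with $(\lambda,\lambda^+)$-tameness shows the type is determined by the pair $(i^\ast, \text{its restriction to } M_{i^\ast})$; there are only $\lambda^+$ such pairs. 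The same reflection argument shows the controlling $A$ in the definition of $\nf_{\ts}$ exists whenever it should, and is robust: once $A$ controls a type, it controls it on every size-$\lambda$ submodel containing $A$, by $\s$-monotonicity and transitivity.

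Next the forking axioms for $\ts$. Non-order, monotonicity and disjointness unwind directly from the definition and the corresponding properties of $\s$. For existence, given $M \leap{\ts} N$ and a basic $p \in \gS (M)$, I would take matched resolutions $M = \bigcup_i M_i$, $N = \bigcup_i N_i$ with $M_i \leap{\s} N_i$ of size $\lambda$, fix a controlling base $A$, and build a $\lea$-coherent chain of $\s$-nonforking extensions $p_i \in \gS (N_i)$ of the restrictions of $p$; $\s$-uniqueness makes $\bigcup_i p_i$ a well-defined element of $\gS (N)$, still controlled by $A$, hence not $\ts$-forking over $M$. Uniqueness reduces, via $(\lambda,\lambda^+)$-tameness, to showing two $\ts$-nonforking extensions of a common type agree on every size-$\lambda$ $N_0 \leap{\s} N$; enlarging the two witnessing bases inside $M$ to a common $A$ (legitimate by $\s$-monotonicity) and absorbing $A$ into a size-$\lambda$ extension of $N_0$, $\s$-uniqueness finishes. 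Density of basic types follows from existence together with local character of $\ts$, which I would prove by splitting on $\cf{\delta}$ for a given chain $\seq{M_i : i \le \delta}$ of size-$\lambda^+$ models: after choosing a resolution $\seq{M_\delta^k : k < \lambda^+}$ of $M_\delta$ cohering with the chain and using Fodor to get $M_\delta^{k^\ast}$ controlling the type, when $\cf{\delta} = \lambda^+$ the model $M_\delta^{k^\ast}$ already lies inside some $M_i$, while when $\cf{\delta} \le \lambda$ one applies local character of $\s$ to the short chain $\seq{M_i \cap M_\delta^{k^\ast} : i \le \delta}$ and transitivity of $\s$-nonforking to push the base down into some $M_i$.

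The step I expect to be the real obstacle is symmetry — exactly the axiom that held up this program historically (Jarden derived every other good $\lambda^+$-frame axiom from amalgamation in $\lambda^+$ and tameness; symmetry was then obtained by Boney, by Jarden under a continuity hypothesis, and finally in general by Boney and the author). Since $\ts$ is a $(\le 1,\lambda^+)$-frame, the elements $a,b$ appearing in the symmetry configuration are singletons, so they live in a common size-$\lambda$ submodel, which is the foothold. The plan is to deduce $\ts$-symmetry from $\s$-symmetry plus tameness: resolve the configuration witnessing clause (a) into a $\lea$-coherent tower of size-$\lambda$ configurations, apply $\s$-symmetry level by level to manufacture size-$\lambda$ witnesses for clause (b), and glue these into size-$\lambda^+$ witnesses $M_b, N_b$. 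The delicate part is keeping the level-$\lambda$ symmetry witnesses $\lea$-coherent so that the union really is $\lea$-increasing; in practice this is routed through a more rigid equivalent of $\s$-symmetry (in terms of uniqueness triples, or of non-splitting) whose witnesses can be chosen canonically enough to cohere, after which $(\lambda,\lambda^+)$-tameness confirms the glued type does not $\ts$-fork over $M$. Long transitivity is not needed here, since a $\lambda^+$-frame carries only types of length $\le 1$.
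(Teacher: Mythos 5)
This statement is quoted in the paper as a \emph{Fact}, with a citation to \cite[6.9]{tame-frames-revisited-jsl}; the paper gives no proof of its own, so your proposal can only be measured against the argument in that reference (whose history the paper's introduction recounts). For everything except symmetry, your outline matches the known development: defining $\ts$-nonforking by the reflection formula, transferring stability via Fodor plus uniqueness plus tameness (with the caveat that the local-character argument only counts \emph{basic} types, so for full stability in $\lambda^+$ one needs the separate splitting argument of Grossberg--VanDieren or type-fullness), and verifying existence, uniqueness, monotonicity, and local character by resolving $\lambda^+$-sized models into coherent $\lambda$-sized chains. That is essentially Jarden's portion of the theorem, and your sketch of it is sound.

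The genuine gap is exactly where you predicted it: symmetry. The plan you give --- resolve the witnessing configuration for clause (a) into a tower of size-$\lambda$ configurations, apply $\s$-symmetry level by level, and glue the resulting witnesses $M_{\bb}, N_{\bb}$ coherently --- is precisely the approach that fails with only $(\lambda,\lambda^+)$-tameness for $1$-types. The symmetry configuration is a statement about the \emph{pair} $(a,b)$: to know that the glued union of level-$\lambda$ witnesses still witnesses clause (b) for the big models, one must control the type of the pair over size-$\lambda$ pieces, which is $(\lambda,\lambda^+)$-tameness for $2$-types; this is what Boney's earlier symmetry transfer in \cite{ext-frame-jml} assumes, and weakening it to $1$-type tameness is the whole content of the Boney--Vasey improvement. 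Your proposed repair (``a more rigid equivalent of $\s$-symmetry in terms of uniqueness triples, or of non-splitting'') does not close this: uniqueness triples require weak successfulness, which is not among the hypotheses here, and the non-splitting route is VanDieren's, which itself needed substantial new machinery. The actual proof in \cite{tame-frames-revisited-jsl} avoids gluing witnesses altogether: it characterizes symmetry of a good frame by the failure of an order-property-like configuration for sequences, shows that failure of symmetry in $\ts$ produces such a configuration of length $\lambda^+$, and then reflects that configuration down to cardinality $\lambda$ (by a pigeonhole/Fodor argument using only $1$-type tameness), contradicting symmetry of $\s$. Without that contrapositive, order-property-based mechanism (or an explicit substitute), your symmetry step is a restatement of the open problem rather than a proof of it.
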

\begin{defin}[{\cite[6.3]{tame-frames-revisited-jsl}}]\label{s-lambdap-def}
  Let $\s$ be a good $\lambda$-frame generating an AEC $\K$. We write $\s_{\lambda^+}$ for the frame on $\K_{\lambda^+}$ with nonforking relation described by Fact \ref{ext-frame}. We write $\s \rest \Ksatp{\lambda^+}_{\lambda^+}$ for its restriction to $\Ksatp{\lambda^+}_{\lambda^+}$.
\end{defin}

\section{When is there a superlimit in $\lambda^+$?}\label{decent-sec}

Starting with a good $\lambda$-frame generating an AEC $\K$, it is natural to ask when $\K$ has a superlimit in $\lambda^+$, i.e.\ when the union of any increasing chains of $\lambda^+$-saturated models is $\lambda^+$-saturated. We should say that there are no known examples when this fails, but we are unable to prove it unconditionally. We give here the following condition on forking characterizing the existence of a superlimit in $\lambda^+$. The condition is extracted from the property $(\ast \ast)_{M_1^\ast, M_2^\ast}$ in \cite[II.8.5]{shelahaecbook}. After the initial submission of this article, John T.\ Baldwin pointed out that the condition is similar, in the first-order case, to the fact (valid in any stable theory) that $\lambda^+$-saturated models are \emph{strongly $\lambda^+$-saturated} (in Shelah's terminology, $F_{\lambda^+}^a$-saturated), see \cite{strong-saturation}.

Recall that whenever $\s$ is a good $\lambda$-frame, there is always a unique saturated model in $\K_{\lambda^+}$ (amalgamation in $\lambda^+$ is not needed to build it), so the definition below always makes sense.

\begin{defin}\label{decent-def}
  Let $\s$ be a good $\lambda$-frame generating an AEC $\K$. We say that $\s$ is \emph{decent} if for any two saturated model $M \lea N$ both in $\K_{\lambda^+}$, any two $\lambda$-sized models $M_0 \lea N_0 \lea N$ with $M_0 \lea M$, and any basic $p \in \gS (M_0)$, the nonforking extension of $p$ to $N_0$ is already realized inside $M$. In other words, there exists $a \in M$ such that $\gtp (a / N_0; N)$ is the nonforking extension of $p$ to $N_0$.
\end{defin}

Note that the type $p$ in the above definition will of course be realized inside $N$ (since $N$ is saturated and contains $N_0$). The point is that it will also be realized inside the smaller model $M$, even though $M$ itself will not necessarily contain $N_0$.

\begin{thm}\label{decent-thm}
  Let $\s$ be a good $\lambda$-frame generating an AEC $\K$. The following are equivalent:

  \begin{enumerate}
  \item\label{cond-1} $\K$ has a superlimit model in $\lambda^+$.
  \item\label{cond-2} There exists a partial order $\le^\ast$ on $\Ksatpp{K}{\lambda^+}_{\lambda^+}$ such that:
    \begin{enumerate}
      \item Whenever $M_0 \lea M_1$ are both in $\Ksatp{\lambda^+}_{\lambda^+}$, there exists $M_2 \in \Ksatp{\lambda^+}_{\lambda^+}$ such that $M_1 \lea M_2$ and $M_0 \le^\ast M_2$
      \item For any increasing chain $\seq{M_i : i < \omega}$ in $\Ksatp{\lambda^+}_{\lambda^+}$ such that $M_i \le^\ast M_{i + 1}$ for all $i < \omega$, $\bigcup_{i < \omega} M_i$ is saturated.
    \end{enumerate}
  \item\label{cond-3} $\s$ is decent.
  \end{enumerate}
\end{thm}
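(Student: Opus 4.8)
The plan is to establish the cycle $(\ref{cond-1}) \Rightarrow (\ref{cond-2}) \Rightarrow (\ref{cond-3}) \Rightarrow (\ref{cond-1})$. The first implication is immediate: take $\le^\ast$ to be the restriction of $\lea$ to $\Ksatp{\lambda^+}_{\lambda^+}$; then (a) holds with $M_2 := M_1$, and (b) is exactly the relevant instance of Fact \ref{sl-easy}.

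For $(\ref{cond-3}) \Rightarrow (\ref{cond-1})$, let $\seq{N_i : i < \delta}$ be a $\lea$-increasing continuous chain of saturated models in $\K_{\lambda^+}$, with $\delta < \lambda^{++}$ a limit ordinal, and put $N_\delta := \bigcup_{i<\delta} N_i$. By Fact \ref{sl-easy} it suffices to show $N_\delta$ is saturated, and since $\s$ has density of basic types this reduces (via the model-homogeneous $=$ saturated lemma) to realizing in $N_\delta$ every \emph{basic} $p \in \gS (A)$ with $A \lea N_\delta$ of cardinality $\lambda$. If $\cf{\delta} > \lambda$, then $A \lea N_i$ for some $i$ by coherence and $p$ is realized in the saturated $N_i$. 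Otherwise, passing to a continuous cofinal subsequence (which does not change $N_\delta$), assume $\delta = \cf{\delta} =: \mu \le \lambda$. Using the L\"owenheim--Skolem axiom, build a $\lea$-increasing continuous $\seq{A^i : i \le \mu}$ with $A^i \lea N_i$ of cardinality $\lambda$ for $i < \mu$ and $A \subseteq A^\mu := \bigcup_{i<\mu} A^i$, so that $A \lea A^\mu \lea N_\delta$; replace $p$ by its nonforking extension $p' \in \gS (A^\mu)$ (existence). By local character along $\seq{A^i : i \le \mu}$, $p'$ does not fork over some $A^{i_0}$, and after re-indexing $\seq{N_i}$ we may take $i_0 = 0$, so $p'$ does not fork over $A^0 \lea N_0$. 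Now fix a saturated $\bigN \in \K_{\lambda^+}$ and a $\K$-embedding $f : N_\delta \to \bigN$ (the saturated model in $\lambda^+$ is universal). Applying decency to the saturated pair $f[N_0] \lea \bigN$, to the $\lambda$-sized $f[A^0] \lea f[A^\mu] \lea \bigN$, and to the basic type $f (p' \rest A^0)$, we obtain a realization of $f (p')$ inside $f[N_0]$; pulling it back along $f$ yields $d \in N_0 \subseteq N_\delta$ realizing $p'$, hence $p$.

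The substantive implication is $(\ref{cond-2}) \Rightarrow (\ref{cond-3})$, which I would prove by contraposition: assuming $\s$ is not decent, I show that no partial order $\le^\ast$ on $\Ksatp{\lambda^+}_{\lambda^+}$ can satisfy both (a) and (b). Fix witnesses: saturated $M^\star \lea N^\star$ in $\K_{\lambda^+}$, $\lambda$-sized $R \lea S \lea N^\star$ with $R \lea M^\star$, and a basic $r \in \gS (R)$ whose nonforking extension $r^S$ to $S$ (which does not fork over $R$, by monotonicity) is not realized in $M^\star$. Given any $\le^\ast$ with property (a), recursively build a $\le^\ast$-increasing chain $\seq{M_n : n < \omega}$ of saturated models in $\K_{\lambda^+}$, $\lea$-increasing $\lambda$-sized $P_n \lea M_n$, and basic $q_n \in \gS (P_n)$, maintaining: (i) an isomorphism $f_n : M_n \cong M^\star$ with $f_n[P_n] = R$ and $f_n (q_n) = r$; (ii) $q_{n+1} \rest P_n = q_n$ with $q_{n+1}$ not forking over $P_n$; (iii) $q_{n+1}$ not realized in $M_n$. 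At stage $n$, transport the witness along an isomorphism $N^\star \cong \bigN_n$ extending $f_n^{-1}$: this produces a saturated $\bigN_n$ with $M_n \lea \bigN_n$, a $\lambda$-sized $P_{n+1}$ with $P_n \lea P_{n+1} \lea \bigN_n$ copying $S$, and $q_{n+1} \in \gS (P_{n+1})$ copying $r^S$; then (ii) and (iii) are inherited from $r^S, r, M^\star$. Apply (a) to $M_n \lea \bigN_n$ to get a saturated $M_{n+1}$ with $\bigN_n \lea M_{n+1}$ (so $P_{n+1} \lea M_{n+1}$) and $M_n \le^\ast M_{n+1}$. To restore (i), use the conjugation property (Fact \ref{conj-fact}), which gives $(S, r^S) \cong (R, r)$ and hence $(P_{n+1}, q_{n+1}) \cong (R, r)$, and extend the resulting isomorphism $P_{n+1} \cong R$ to $f_{n+1} : M_{n+1} \cong M^\star$ by strong homogeneity of the saturated model in $\lambda^+$. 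Finally, with $P_\infty := \bigcup_n P_n$ and $q_\infty \in \gS (P_\infty)$ the nonforking extension of $r$, transitivity and uniqueness of nonforking give $q_\infty \rest P_n = q_n$; then by (iii) no $a \in M_\omega := \bigcup_n M_n$ realizes $q_\infty$ (such $a$ would lie in some $M_n$ and witness that $q_{n+1}$ is realized in $M_n$). Thus $M_\omega$ is not saturated, contradicting (b).

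The main obstacle is the recursion in $(\ref{cond-2}) \Rightarrow (\ref{cond-3})$: one must reproduce the non-decency configuration above \emph{each} $M_n$ while keeping the witnessing types $q_n$ \emph{coherent}, so that a single type over $P_\infty$ is omitted in the limit. This is what forces the isomorphism invariant (i) and the appeal to conjugation (Fact \ref{conj-fact}); slotting property (a) into each step — invoked once, to pass from $\bigN_n$ to a $\le^\ast$-successor $M_{n+1}$ — is then routine bookkeeping. By contrast, $(\ref{cond-3}) \Rightarrow (\ref{cond-1})$ turns out comparatively soft once one notices that embedding $N_\delta$ into a saturated model lets decency be applied a \emph{single} time, rather than having to assemble a realization out of infinitely many approximations.
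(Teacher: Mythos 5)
Your overall architecture matches the paper's: the cycle $(\ref{cond-1}) \Rightarrow (\ref{cond-2}) \Rightarrow (\ref{cond-3}) \Rightarrow (\ref{cond-1})$, with $(\ref{cond-1}) \Rightarrow (\ref{cond-2})$ trivial, $(\ref{cond-3}) \Rightarrow (\ref{cond-1})$ via a $\lambda$-sized resolution plus local character plus one application of decency inside an ambient saturated model, and $(\ref{cond-2}) \Rightarrow (\ref{cond-3})$ by building an $\omega$-chain of $\le^\ast$-successors that omits a coherent nonforking type, contradicting (b). Your $(\ref{cond-3}) \Rightarrow (\ref{cond-1})$ is correct and in fact makes explicit a step the paper leaves implicit (the embedding of $N_\delta$ into the saturated model of cardinality $\lambda^+$ so that decency is applicable).

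However, there is a genuine gap in your $(\ref{cond-2}) \Rightarrow (\ref{cond-3})$: to restore your invariant (i) you invoke the conjugation property (Fact \ref{conj-fact}) to get $(S, r^S) \cong (R, r)$ and hence $(P_{n+1}, q_{n+1}) \cong (R, r)$. Fact \ref{conj-fact} is only available for \emph{categorical} good $\lambda$-frames, whereas Theorem \ref{decent-thm} makes no categoricity assumption (and is applied elsewhere, e.g.\ in Lemma \ref{decent-from-frame} and Theorem \ref{frame-ext-weak}, to frames that are not assumed categorical). Without categoricity there is no reason for $S$ to be isomorphic to $R$ at all, let alone by an isomorphism carrying $r^S$ to $r$, so the invariant ``$f_n[P_n] = R$ and $f_n(q_n) = r$'' cannot be maintained past the first step. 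The paper's construction avoids this entirely: instead of conjugating the new configuration back onto the original witness, it pushes the \emph{current} configuration forward, using that $M_n$ and $M_{n+1}$ are saturated of cardinality $\lambda^+$ and hence isomorphic over the common $\lambda$-sized submodel $P_n$; extending such an isomorphism $f : M_n \cong_{P_n} M_{n+1}$ to $g$ defined on all of $M_{n+1}$ transports ``the nonforking extension of $q_n$ to $P_{n+1}$ is omitted in $M_n$'' to ``the nonforking extension of $q_n$ to $g[P_{n+1}]$ is omitted in $M_{n+1}$'' (the type $q_n$ itself is fixed because $g$ fixes $P_n$), and then one takes $P_{n+2}$ containing both $P_{n+1}$ and $g[P_{n+1}]$. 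Replacing your conjugation step by this renaming argument repairs the proof; as written, your argument only establishes the implication for categorical $\s$.
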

\begin{proof} \
  \begin{itemize}
  \item \underline{(\ref{cond-1}) implies (\ref{cond-2})}: Trivial (take $\le^\ast$ to be $\lea$).
  \item \underline{(\ref{cond-3}) implies (\ref{cond-1})}: Assume (\ref{cond-3}). Let $\delta < \lambda^{++}$ be a limit ordinal and let $\seq{M_i : i < \delta}$ be an increasing chain of saturated models in $\K_{\lambda^+}$. We want to show that $M_\delta := \bigcup_{i < \delta} M_i$ is saturated. Without loss of generality, $\delta = \cf{\delta} < \lambda^+$. Let $M^0 \in \PKp{\lambda}(M_\delta)$. Build $\seq{M_i^0 : i < \delta}$ increasing chain in $\K_\lambda$ such that for all $i < \delta$, $M_i^0 \lea M_i$ and $|M^0| \cap |M_i| \subseteq |M_{i + 1}^0|$. This is possible using that each $M_i$ is saturated. Let $M_\delta^0 := \bigcup_{i < \delta} M_i^0$. Then $M^0 \lea M_\delta^0$. Let $p \in \gS (M_\delta^0)$. By local character, there exists $i < \delta$ such that $p$ does not fork over $M_i^0$. Since $\s$ is decent, $p$ is realized in $M_i$, hence in $M_\delta$. This shows that $M_\delta$ realizes all basic types over its $\K$-substructures of cardinality $\lambda$, which in turns implies that $M_\delta$ is saturated \cite[II.4.3]{shelahaecbook}.

  \item \underline{(\ref{cond-2}) implies (\ref{cond-3})}: Assume (\ref{cond-2}) and suppose for a contradiction that $\s$ is not decent. We build $\seq{M_i : i \le \omega}$, $\seq{M_i^0 : i \le \omega}$ increasing continuous and a type $p \in \gS (M_0^0)$ such that for all $i < \omega$: (writing $p_{M}$ for the nonforking extension of $p$ to $\gS (M)$):

  \begin{enumerate}
  \item $M_i \in \Ksatp{\lambda^+}_{\lambda^+}$.
  \item $M_{i} \le^\ast M_{i + 1}$.
  \item $M_i^0 \in \PKp{\lambda}(M_i)$.
  \item $p_{M_{i + 1}^0}$ is \emph{not} realized in $M_i$.
  \end{enumerate}

  \underline{This is enough}: By assumption, $M_{\omega}$ is saturated. Therefore $p_{M_{\omega}^0}$ is realized inside $M_{\omega}$, and therefore inside $M_i$ for some $i < \omega$. This means in particular that $p_{M_{i + 1}}$ is realized in $M_i$, a contradiction.
  
  \underline{This is possible}: For $i = 0$, pick $M_0, M_1', M_0^0, M_1^0, p \in \gS (M_0^0)$ witnessing the failure of being decent (i.e.\ $M_0 \lea M_1'$ are both saturated in $\K_{\lambda^+}$, $M_0^0 \lea M_0$, $M_0^0 \lea M_1^0 \lea M_1'$, $M_0^0, M_1^0 \in \K_\lambda$, and $p_{M_1^0}$ is omitted in $M_)$), and then use the properties of $\le^\ast$ to obtain $M_1 \in \Ksatp{\lambda^+}_{\lambda^+}$ such that $M_0 \le^\ast M_1$ and $M_1' \lea M_1$. Now given $\seq{M_j^0 : j \le i + 1}$, $\seq{M_j : j \le i + 1}$, $M_i$ and $M_{i + 1}$ are both saturated and so must be isomorphic over any common submodel of cardinality $\lambda$. Let $f: M_i \cong_{M_i^0} M_{i + 1}$ and let $g : M_{i + 1} \cong M_{i + 2}'$ be an extension of $f$ (in particular $M_{i + 1} \lea M_{i + 2}'$). Since $g$ is an isomorphism, $g (p_{M_{i + 1}^0})$ is not realized in $g[M_i] = M_{i + 1}$. Pick $M_{i + 2}^0$ in $\PKp{\lambda} (M_{i + 2}')$ such that $M_{i + 1}^0 \lea M_{i + 2}^0$ and $g[M_{i + 1}^0] \lea M_{i + 2}^0$. Finally, pick some $M_{i + 2} \in \Ksatp{\lambda^+}_{\lambda^+}$ such that $M_{i + 1} \le^\ast M_{i + 2}$ and $M_{i + 2}' \lea M_{i + 2}$. This is possible by the assumed properties of $\le^\ast$.
  \end{itemize}
\end{proof}
\begin{remark}
  Theorem \ref{decent-thm} can be generalized to the weaker framework of a $\lambda$-superstable AEC (implicit for example in \cite{gvv-mlq}; see \cite{uq-forking-mlq} for what forking is there and what properties it has). For this we ask in the definition of decent that $M$ be limit, and we ask for example that $M_{i + 1}$ is limit over $M_i$ in the proof of (\ref{cond-3}) implies (\ref{cond-1}) of Theorem \ref{decent-thm}. VanDieren \cite{vandieren-symmetry-apal} has shown that $\lambda$-symmetry (a property akin to the symmetry property of good $\lambda$-frame, see \cite[2.18]{uq-forking-mlq}) is equivalent to the property that reduced towers are continuous, and if there is a superlimit in $\lambda^+$, then reduced towers are continuous. Thus decent is to ``superlimit in $\lambda^+$'' what $\lambda$-symmetry is to ``reduced towers are continuous''. In particular, being decent is a strengthening of the symmetry property. Note also that taking $\le^\ast$ in (\ref{cond-2}) of Theorem \ref{decent-thm} as ``being universal over'', we obtain an alternate proof of the main theorem of \cite{vandieren-chainsat-apal} which showed that $\lambda$ and $\lambda^+$-superstability together with the uniqueness of limit models in $\lambda^+$ imply that the union of a chain of $\lambda^+$-saturated models is $\lambda^+$-saturated (see also the proof of Lemma \ref{decent-from-frame} here).
\end{remark}

A property related to being decent is what Shelah calls $\goodp$. We show that $\goodp$ implies decent. We do not know whether the converse holds but it seems (see Fact \ref{reflect-equiv-fact}) that wherever Shelah uses $\goodp$, he only needs decent.

\begin{defin}[III.1.3 in \cite{shelahaecbook}]\label{goodp-def}
  Let $\s$ be a good $\lambda$-frame generating an AEC $\K$. We say that $\s$ is \emph{$\goodp$} if the following is \emph{impossible}:
    
    There exists increasing continuous chains in $\K_\lambda$ $\seq{M_i : i < \lambda^+}$, $\seq{N_i : i < \lambda^+}$, a basic type $p \in \gS (M_0)$, and $\seq{a_i : i < \lambda^+}$ such that for any $i < \lambda^+$:

  \begin{enumerate}
  \item $M_i \lea N_i$.
  \item $a_{i + 1} \in |M_{i + 2}|$ and $\gtp (a_{i + 1}, M_{i + 1}, M_{i + 2})$ is a nonforking extension of $p$, but $\gtp (a_{i + 1}, N_0, N_{i + 2})$ forks over $M_0$.
  \item $\bigcup_{j < \lambda^+} M_j$ is saturated.
  \end{enumerate}
\end{defin}

\begin{lem}\label{goodp-decent}
  Let $\s$ be a good $\lambda$-frame. If $\s$ is $\goodp$, then $\s$ is decent.
\end{lem}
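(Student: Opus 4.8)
The plan is to prove the contrapositive: assuming $\s$ is \emph{not} decent, I will build a configuration of exactly the kind that $\goodp$ forbids. So fix a witness to the failure of decency: saturated models $M \lea N$ in $\K_{\lambda^+}$, $\lambda$-sized models $M_0 \lea N_0 \lea N$ with $M_0 \lea M$, and a basic type $p \in \gS (M_0)$ such that, writing $q \in \gS (N_0)$ for the nonforking extension of $p$ to $N_0$ (it exists by the existence property and is unique by the uniqueness property of the frame), no $a \in M$ satisfies $\gtp (a / N_0; N) = q$. For $A$ with $M_0 \lea A$, I write $p_A \in \gS (A)$ for the nonforking extension of $p$ to $A$.

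First I would resolve the saturated model $M$ as an increasing continuous union $M = \bigcup_{i < \lambda^+} M_i$ of $\lambda$-sized $\lea$-submodels with $M_0$ as the first term. Using that $M$ is saturated of cardinality $\lambda^+$, a routine simultaneous bookkeeping argument also produces, for each $i < \lambda^+$, an element $a_{i+1} \in M_{i+2}$ realizing $p_{M_{i+1}}$: at the stage where $M_{i+1}$ has been chosen one picks such a realization inside $M$ (possible by saturation, since $p_{M_{i+1}}$ is a type over a $\lambda$-sized $\lea$-submodel of $M$) and then throws it into $M_{i+2}$. Next I would build an increasing continuous chain $\seq{N_i : i < \lambda^+}$ of $\lambda$-sized models with $N_i \lea N$ and $M_i \lea N_i$ for all $i$, started from the given $N_0$; at successor steps take a $\lambda$-sized $\lea$-submodel of $N$ containing $N_i \cup M_{i+1}$ (coherence then gives $N_i \lea N_{i+1}$ and $M_{i+1} \lea N_{i+1}$), and at limits take unions (which are $\lea N$ by smoothness). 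Note $M_0 \lea N_0$ holds by the choice of witness, so the hypotheses line up.

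The heart of the matter is the claim that $\gtp (a_{i+1} / N_0; N_{i+2})$ forks over $M_0$ for \emph{every} $i < \lambda^+$. This is where we use that $\s$ is not decent. If this type did not fork over $M_0$, then, since $\gtp (a_{i+1} / M_{i+1}; M_{i+2}) = p_{M_{i+1}}$ restricts to $p$ over $M_0$ and $M_{i+2} \lea N_{i+2}$, the type $\gtp (a_{i+1} / N_0; N_{i+2})$ would be a nonforking extension of $p$ to $N_0$, hence equal to $q$ by uniqueness; and since Galois types are invariant under $\lea$-extensions of the ambient model and $N_{i+2} \lea N$, we would get $\gtp (a_{i+1} / N_0; N) = q$ with $a_{i+1} \in M_{i+2} \lea M$, i.e.\ $a_{i+1} \in M$, contradicting the choice of witness. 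Granting the claim, the sequences $\seq{M_i : i < \lambda^+}$, $\seq{N_i : i < \lambda^+}$, the type $p$, and the elements $\seq{a_i : i < \lambda^+}$ (with $a_0$ chosen arbitrarily) verify all the conditions in the definition of $\goodp$: $M_i \lea N_i$; $a_{i+1} \in M_{i+2}$ with $\gtp (a_{i+1} / M_{i+1}; M_{i+2}) = p_{M_{i+1}}$ a nonforking extension of $p$ while $\gtp (a_{i+1} / N_0; N_{i+2})$ forks over $M_0$; and $\bigcup_{i < \lambda^+} M_i = M$ is saturated. This contradicts that $\s$ is $\goodp$, completing the contrapositive.

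The conceptual input is short — it is essentially just the uniqueness property of the frame together with invariance of Galois types under $\lea$-extensions of the ambient model — so I expect the only real work, and the main source of fiddliness, to be the first construction: carrying out the interleaved recursion that simultaneously resolves $M$ as a continuous $\lea$-chain with prescribed first term and guarantees that each $p_{M_{i+1}}$ is realized already in $M_{i+2}$, while keeping the indexing so that everything stays increasing and continuous and $\bigcup_i M_i$ exhausts $M$. Keeping all the $M_i$ inside $M$ and all the $N_i$ inside $N$ is precisely what makes the type computations in the claim legitimate, so that should be watched carefully.
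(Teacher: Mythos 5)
Your proposal is correct and follows essentially the same route as the paper: negate decency, resolve the saturated model $M$ into a continuous $\lambda$-chain realizing the nonforking extensions along the way, and use uniqueness of nonforking extensions to see that each $\gtp(a_{i+1}/N_0; N_{i+2})$ must fork over $M_0$, contradicting $\goodp$. The only cosmetic difference is that you arrange $\bigcup_i M_i = M$ to get saturation of the union, whereas the paper instead requires each $M_{i+1}$ to be universal over $M_i$ (and explicitly remarks that requiring the union to equal $M$ would also work).
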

\begin{proof}
  Let $\K$ be the AEC generated by $\s$. Suppose that $\s$ is not decent. Fix witnesses $M, N, M^0, N^0, p$. We build increasing continuous chains in $\K_\lambda$, $\seq{M_i : i < \lambda^+}$, $\seq{N_i : i < \lambda^+}$ and $\seq{a_i :i < \lambda^+}$ such that for all $i < \lambda^+$:

  \begin{enumerate}
  \item $M_0 = M^0$, $N_0 = N^0$.
  \item $M_i \lea M$, $N_i \lea N$.
  \item $M_i \lea N_i$.
  \item $M_{i + 1}$ is universal over $M_i$.
  \item $a_i \in M_{i + 1}$.
  \item $\gtp (a_i / M_i; M_{i + 1})$ is a nonforking extension of $p$.
  \end{enumerate}

  This is possible since both $M$ and $N$ are saturated. This is enough: $M_{\lambda^+} := \bigcup_{i < \lambda^+} M_i$ is saturated (we could also require that $M_{\lambda^+} = M$ but this is not needed) and for any $i < \lambda^+$, $\gtp (a_{i + 1} / N_0; N_{i + 2})$ forks over $M_0$. If not, then by the uniqueness property of nonforking, we would have that $a_{i + 1}$ realizes the nonforking extension of $p$ to $N_0 = N^0$, which we assumed was impossible.
\end{proof}

\section{From weak tameness to good frame}

In this section, we briefly investigate how to generalize Fact \ref{ext-frame} to AECs that are only $(\lambda, \lambda^+)$-weakly tame. The main problem is that the class $\Ksatp{\lambda^+}_{\lambda^+}$ may not be closed under unions of chains (i.e.\ it may not be an AEC in $\lambda^+$). Indeed, this is the only difficulty. 

\begin{thm}\label{frame-ext-weak}
  Let $\s$ be a good $\lambda$-frame generating an AEC $\K$. If $\K$ is $(\lambda, \lambda^+)$-weakly tame, the following are equivalent:

  \begin{enumerate}
  \item\label{weak-ext-1} $\s$ is decent and $\Ksatp{\lambda^+}_{\lambda^+}$ has amalgamation.
  \item\label{weak-ext-2} $\s_{\lambda^+} \rest \Ksatp{\lambda^+}_{\lambda^+}$ (see Definition \ref{s-lambdap-def}) is a good $\lambda^+$-frame on $\Ksatp{\lambda^+}_{\lambda^+}$.
  \item\label{weak-ext-3} There is a good $\lambda^+$-frame on $\Ksatp{\lambda^+}_{\lambda^+}$.
  \end{enumerate}

  Moreover in (\ref{weak-ext-1}), if $\s$ is type-full, then we can conclude that the good $\lambda^+$-frame described in (\ref{weak-ext-2}) is also type-full.
\end{thm}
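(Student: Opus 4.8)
The plan is to prove the cycle $(\ref{weak-ext-2}) \Rightarrow (\ref{weak-ext-3}) \Rightarrow (\ref{weak-ext-1}) \Rightarrow (\ref{weak-ext-2})$, deriving the type-fullness addendum along the last implication. The implication $(\ref{weak-ext-2}) \Rightarrow (\ref{weak-ext-3})$ is immediate. For $(\ref{weak-ext-3}) \Rightarrow (\ref{weak-ext-1})$, suppose $\ts$ is a good $\lambda^+$-frame on $\Ksatp{\lambda^+}_{\lambda^+}$. By the definition of a good frame, $\K_{\ts} = \Ksatp{\lambda^+}_{\lambda^+}$ is an AEC in $\lambda^+$; in particular it is closed under unions of increasing chains of length $< \lambda^{++}$, so by Fact \ref{sl-easy} the class $\K$ has a superlimit in $\lambda^+$, whence $\s$ is decent by Theorem \ref{decent-thm}. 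Amalgamation for $\Ksatp{\lambda^+}_{\lambda^+}$ is part of goodness of $\ts$, so $(\ref{weak-ext-1})$ holds.

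The work is in $(\ref{weak-ext-1}) \Rightarrow (\ref{weak-ext-2})$. Assume $\s$ is decent and $\Ksatp{\lambda^+}_{\lambda^+}$ has amalgamation. By Theorem \ref{decent-thm} (decent gives a superlimit in $\lambda^+$) and Fact \ref{sl-easy}, $\Ksatp{\lambda^+}_{\lambda^+}$ is closed under unions of chains; together with coherence and the chain axioms for chains of length $\le \lambda^+$ inherited from $\K$, this makes it an AEC in $\lambda^+$. It is in fact categorical in $\lambda^+$ (there is a unique saturated model of that size, using amalgamation, joint embedding and stability in $\lambda$), hence has joint embedding; it has amalgamation by hypothesis; it has no maximal models by \cite[II.4.13]{shelahaecbook} (or \cite[3.1.9]{jrsh875}); and it is stable in $\lambda^+$ by the usual counting argument, in which $(\lambda, \lambda^+)$-weak tameness reduces the count of types over the saturated model of size $\lambda^+$ to a count over its $\lambda$-sized submodels (after a local-character-plus-Fodor reduction of each type to a $\lambda$-sized base) and stability in $\lambda$ finishes. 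This handles the ``reasonable class'' requirements of goodness for $\s_{\lambda^+} \rest \Ksatp{\lambda^+}_{\lambda^+}$.

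For the forking axioms I would follow Boney and the author's proof of Fact \ref{ext-frame} in \cite{tame-frames-revisited-jsl}, with the one structural change that every model of cardinality $\lambda^+$ appearing in the argument is taken saturated; this is legitimate precisely because decent makes the class of such models closed under chains and they are cofinal among models of size $\lambda^+$. The key observation is that each appeal to tameness in \cite{tame-frames-revisited-jsl} is an appeal to determination of a type over a \emph{saturated} model of cardinality $\lambda^+$ by its $\lambda$-sized restrictions, so $(\lambda, \lambda^+)$-weak tameness suffices. Invariance and monotonicity are routine from the description of $\s_{\lambda^+}$ in Fact \ref{ext-frame}; uniqueness follows by amalgamating the two $\lambda$-sized witnesses into a single $M_0 \lea M$, applying $\s$-uniqueness on every $\lambda$-sized $N_0$ with $M_0 \lea N_0 \lea N$, and concluding by weak tameness; local character comes from $\s$-local character along a $\lambda$-sized filtration together with Fodor's lemma; existence, disjointness and density of basic types go through as in \cite{tame-frames-revisited-jsl}. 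I expect \textbf{symmetry} to be the main obstacle, exactly as it is in \cite{tame-frames-revisited-jsl}: one must check that their symmetry argument for $\s_{\lambda^+}$ only manipulates $\lambda$-sized approximations and ambient $\lambda^+$-sized models that may be chosen saturated — in particular it passes through unions of chains of saturated models, which is where closure under chains (hence decent) is genuinely used — so that weak tameness is enough.

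For the addendum, suppose $\s$ is type-full and let $p \in \gS(M)$, with $M$ the unique saturated model of cardinality $\lambda^+$. Writing $M$ as a $\lea$-increasing continuous union $\bigcup_{i < \lambda^+} M_i$ of $\lambda$-sized models (built with the care needed for the local character argument of \cite{tame-frames-revisited-jsl}), type-fullness of $\s$ and $\s$-local character give, for each limit $i$, some $j(i) < i$ with $p \rest M_i$ not $\s$-forking over $M_{j(i)}$, and Fodor's lemma yields a single $j^\ast$ working on a cofinal set of $i$'s; this is the type-fullness half of the local character computation for $\s_{\lambda^+}$, and it gives that $p$ does not $\s_{\lambda^+}$-fork over $M$ (witnessed by $M_{j^\ast}$ in the sense of Fact \ref{ext-frame}), so $p$ is $\s_{\lambda^+}$-basic. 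Since $M$ is the only model of $\Ksatp{\lambda^+}_{\lambda^+}$, this makes $\s_{\lambda^+} \rest \Ksatp{\lambda^+}_{\lambda^+}$ type-full.
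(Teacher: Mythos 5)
Your proposal matches the paper's proof: the paper establishes the same cycle of implications, proving (\ref{weak-ext-3}) $\Rightarrow$ (\ref{weak-ext-1}) via Fact \ref{sl-easy} and Theorem \ref{decent-thm} exactly as you do, and disposes of (\ref{weak-ext-1}) $\Rightarrow$ (\ref{weak-ext-2}) by declaring it ``exactly as in the proof of Fact \ref{ext-frame}'' --- i.e., the rerun of the argument of \cite{tame-frames-revisited-jsl} over saturated models that you sketch, with weak tameness sufficing because all ambient $\lambda^+$-sized models can be taken saturated. Your extra detail on the individual axioms and on the type-fullness addendum is consistent with, and more explicit than, what the paper records.
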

\begin{proof}
  (\ref{weak-ext-1}) implies (\ref{weak-ext-2}) is exactly as in the proof of Fact \ref{ext-frame}. (\ref{weak-ext-2}) implies (\ref{weak-ext-3}) is trivial. (\ref{weak-ext-3}) implies (\ref{weak-ext-1}) is by Fact \ref{sl-easy} and Theorem \ref{decent-thm}, since by definition the existence of a good $\lambda^+$-frame on $\Ksatp{\lambda^+}_{\lambda^+}$ implies that $\Ksatp{\lambda^+}_{\lambda^+}$ has amalgamation and $\Ksatp{\lambda^+}_{\lambda^+}$ is an AEC in $\lambda^+$, hence that the union of an increasing chain of $\lambda^+$-saturated models is $\lambda^+$-saturated, i.e.\ that $\K$ has a superlimit in $\lambda^+$.
\end{proof}
\begin{remark}
  A note on the definitions: statements of the form ``there is a good $\lambda^+$-frame on $\Ksatp{\lambda^+}_{\lambda^+}$'' imply in particular (from the definition of a good frame) that $\Ksatp{\lambda^+}_{\lambda^+}$ is an AEC in $\lambda^+$, i.e.\ that the union of an increasing chain of $\lambda^+$-saturated models is $\lambda^+$-saturated. We will often use this without comments.
\end{remark}

It is natural to ask whether the good $\lambda^+$-frame in Theorem \ref{frame-ext-weak} is itself decent. We do not know the answer, but can answer positively assuming $2^{\lambda} < 2^{\lambda^+}$ (see Corollary \ref{main-cor-gch}) and in fact in this case $\s$ is also $\goodp$. We can show in ZFC that being $\goodp$ transfers up. The proof adapts an argument of Shelah \cite[III.1.6(2)]{shelahaecbook}.

\begin{thm}
  Let $\s$ be a $\goodp$ $\lambda$-frame generating an AEC $\K$. If $\K$ is $(\lambda, \lambda^+)$-weakly tame and $\Ksatp{\lambda^+}_{\lambda^+}$ has amalgamation, then $\s_{\lambda^+} \rest \Ksatp{\lambda^+}_{\lambda^+}$ is a $\goodp$ $\lambda^+$-frame on $\Ksatp{\lambda^+}_{\lambda^+}$.
\end{thm}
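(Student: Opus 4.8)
The plan is to notice that most of the theorem is already available and only the $\goodp$ clause needs new work. Since $\s$ is $\goodp$ it is decent by Lemma~\ref{goodp-decent}, so the hypotheses of Theorem~\ref{frame-ext-weak} are satisfied and $\ts := \s_{\lambda^+} \rest \Ksatp{\lambda^+}_{\lambda^+}$ is a good $\lambda^+$-frame on $\Ksatp{\lambda^+}_{\lambda^+}$ (type-full if $\s$ is). It remains to verify the impossibility of Definition~\ref{goodp-def} for $\ts$ (with $\lambda$ replaced by $\lambda^+$). Suppose, toward a contradiction, that there are $\leap{\ts}$-increasing continuous chains $\seq{M_i : i < \lambda^{++}}$, $\seq{N_i : i < \lambda^{++}}$ in $\K_{\ts}$, a $\ts$-basic type $p \in \gS(M_0)$, and elements $\seq{a_i : i < \lambda^{++}}$ such that for all $i < \lambda^{++}$: $M_i \lea N_i$; $a_{i+1} \in |M_{i+2}|$ with $\gtp(a_{i+1}/M_{i+1};M_{i+2})$ a $\ts$-nonforking extension of $p$ but $\gtp(a_{i+1}/N_0;N_{i+2})$ $\ts$-forking over $M_0$; and $\bigcup_{j<\lambda^{++}} M_j$ saturated. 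The goal is to extract from this a $\goodp$-violating configuration for $\s$ of length $\lambda^+$, contradicting the hypothesis that $\s$ is $\goodp$.

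The crux is to pick, uniformly over a cofinal set of indices, a single pair of $\lambda$-sized base models. Fix a $\leap{\s}$-increasing continuous exhaustion $\seq{M_0^{(\xi)} : \xi < \lambda^+}$ of $M_0$ whose first term already contains a $\lambda$-sized model over which $p$ does not $\ts$-fork (one exists since $p$ is $\ts$-basic). By the description of $\ts$-nonforking from Fact~\ref{ext-frame}, for each $j < \lambda^{++}$ there is $\xi_j < \lambda^+$ such that $M_0^{(\xi_j)}$ witnesses that $\gtp(a_{j+1}/M_{j+1};M_{j+2})$ does not $\ts$-fork over $M_0$; since $\lambda^{++}$ is regular above $\lambda^+$, pigeonhole yields a fixed $\xi^\ast < \lambda^+$ and $J_1 \subseteq \lambda^{++}$ of size $\lambda^{++}$ with $\xi_j = \xi^\ast$ for $j \in J_1$. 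Put $M_0^0 := M_0^{(\xi^\ast)}$ and $p^0 := p \rest M_0^0$, a basic $\s$-type. Next fix a $\leap{\s}$-increasing continuous exhaustion $\seq{N_0^{(\xi)} : \xi < \lambda^+}$ of $N_0$ with $M_0^{(\xi)} \lea N_0^{(\xi)}$ for all $\xi$. For $j \in J_1$, instantiating the description of Fact~\ref{ext-frame} at the $\lambda$-sized submodel $M_0^0$ of $M_0$, the fact that $\gtp(a_{j+1}/N_0;N_{j+2})$ $\ts$-forks over $M_0$ produces a $\lambda$-sized $N_j^\# \lea N_0$ containing $M_0^0$ over which $\gtp(a_{j+1}/N_j^\#;\cdot)$ $\s$-forks over $M_0^0$; enlarging $N_j^\#$ inside the chain (which preserves $\s$-forking over a fixed base, by monotonicity) we may take $N_j^\# = N_0^{(\zeta_j)}$ with $\zeta_j \ge \xi^\ast$. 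Pigeonhole again gives $\zeta^\ast \ge \xi^\ast$ and $J_2 \subseteq J_1$ of size $\lambda^{++}$ with $\zeta_j = \zeta^\ast$ for $j \in J_2$; put $N_0^0 := N_0^{(\zeta^\ast)}$. Then for every $j \in J_2$: $\gtp(a_{j+1}/M_0^0;\cdot) = p^0$; $\gtp(a_{j+1}/M^\ast;\cdot)$ does not $\s$-fork over $M_0^0$ for every $\lambda$-sized $M^\ast$ with $M_0^0 \lea M^\ast \lea M_{j+1}$; and $\gtp(a_{j+1}/N_0^0;\cdot)$ $\s$-forks over $M_0^0$.

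With $M_0^0 \lea N_0^0$ fixed, a routine bookkeeping recursion finishes the argument. Let $g^\ast < \lambda^{++}$ have cofinality $\lambda^+$ with $J_2 \cap g^\ast$ cofinal in $g^\ast$ (e.g.\ $g^\ast := \sup$ of the first $\lambda^+$ elements of $J_2$); then $M_{g^\ast} := \bigcup_{\beta < g^\ast} M_\beta$ is $\lambda^+$-saturated, being the union of a $\lea$-increasing chain of length $\lambda^+$ of members of $\K_{\ts}$ and $\K_{\ts}$ being an AEC in $\lambda^+$. Now build by recursion on $i < \lambda^+$ $\leap{\s}$-increasing continuous chains $\seq{M_i^0 : i < \lambda^+}$, $\seq{N_i^0 : i < \lambda^+}$ in $\K_\lambda$ extending $M_0^0, N_0^0$, an increasing continuous cofinal $h : \lambda^+ \to g^\ast$, and an increasing sequence $\seq{j(i) : i < \lambda^+}$ in $J_2$, such that $M_i^0 \lea M_{h(i)}$, $N_i^0 \lea N_{h(i)}$, $M_i^0 \lea N_i^0$, $\bigcup_{i<\lambda^+} M_i^0 = M_{g^\ast}$ (bookkeeping its universe), $h(i+2) > j(i)+1$, and $a_{j(i)+1} \in |M_{i+2}^0|$. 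Setting $b_{i+1} := a_{j(i)+1}$, we get for each $i < \lambda^+$: $b_{i+1} \in |M_{i+2}^0|$; $\gtp(b_{i+1}/M_{i+1}^0;M_{i+2}^0)$ is a nonforking extension of $p^0$ (apply the last displayed property with $M^\ast = M_{i+1}^0$, using $M_0^0 \lea M_{i+1}^0 \lea M_{j(i)+1}$); and $\gtp(b_{i+1}/N_0^0;N_{i+2}^0)$ $\s$-forks over $M_0^0$. Since also $\bigcup_{i<\lambda^+} M_i^0 = M_{g^\ast}$ is saturated, the data $\seq{M_i^0}$, $\seq{N_i^0}$, $p^0$, $\seq{b_i}$ violate the $\goodp$ property of $\s$, a contradiction. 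The only genuinely delicate step is the uniform choice of $M_0^0$ and $N_0^0$: since the theorem is in ZFC one cannot enumerate the $\lambda$-sized submodels of $N_0$, and must instead exploit that $\lambda^{++}$ indices are being distributed among $\lambda^+$ members of a continuous chain; everything else is bookkeeping together with the already-established transfer of ``good frame'' and decency.
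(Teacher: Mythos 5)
Your proposal is correct and follows essentially the same route as the paper's proof: reduce to the $\goodp$ clause via Lemma \ref{goodp-decent} and Theorem \ref{frame-ext-weak}, then run two pruning/pigeonhole arguments over the $\lambda^{++}$-indexed witness to fix $\lambda$-sized bases $M_0^0 \lea N_0^0$ (the paper's $M_{j^\ast}'$, $N_{j^{\ast\ast}}'$), and finally assemble a length-$\lambda^+$ configuration contradicting $\goodp$ for $\s$. The only (cosmetic) difference is how saturation of $\bigcup_i M_i^0$ is secured: you bookkeep the union to equal a fixed saturated $M_{g^\ast}$, while the paper requires each $M_{j+1}^\ast$ to be universal over $M_j^\ast$.
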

\begin{proof}
  By Lemma \ref{goodp-decent}, $\s$ is decent, so by Theorem \ref{frame-ext-weak} $\ts := \s_{\lambda^+} \rest \Ksatp{\lambda^+}_{\lambda^+}$ is a good $\lambda^+$-frame on $\Ksatp{\lambda^+}_{\lambda^+}$. Moreover by definition $\ts$-nonforking is described in terms of $\s$-nonforking as in the statement of Fact \ref{ext-frame}.

  Suppose that $\ts$ is \emph{not} $\goodp$. Let $\seq{M_i : i < \lambda^{++}}$, $\seq{N_i : i < \lambda^{++}}$, $p$, $\seq{a_i : i < \lambda^{++}}$ witness that $\ts$ is not $\goodp$. Let $\seq{M_j' : j < \lambda^+}$ be increasing continuous in $\K_\lambda$ such that $M_0 = \bigcup_{j < \lambda^+} M_j'$ and let $\seq{N_j' : j < \lambda^+}$ be increasing continuous in $\K_\lambda$ such that $N_0 = \bigcup_{j < \lambda^+} N_j'$ and $M_j' \lea N_j'$ for all $j < \lambda^+$.

  By a standard pruning argument, there is $j^\ast < \lambda^+$ and an unbounded $S \subseteq \lambda^{++}$ of successor ordinals such that for all $i \in S$ and all $M' \in \PKp{\lambda} (M_i)$ with $M_{j^\ast}' \lea M'$, $\gtp (a_i / M'; M_{i + 1})$ does not $\s$-fork over $M_{j^\ast}'$. Now by assumption for all $i \in S$, $\gtp (a_i / N_0; N_{i + 1})$ $\ts$-forks over $M_0$, so by a pruning argument again, there is an unbounded $S' \subseteq S$ and $j^{\ast \ast} \in [j^\ast, \lambda^+)$ such that for all $i \in S'$, $\gtp (a_i / N_{j^{\ast \ast}}'; N_{i + 1})$ $\s$-forks over $M_{j^\ast}'$. We build $\seq{i_j : j < \lambda^+}$ and $\seq{M_j^\ast : j < \lambda^+}$, $\seq{N_j^\ast : j < \lambda^+}$ increasing continuous in $\K_\lambda$ such that for all $j < \lambda^+$:

    \begin{enumerate}
    \item $i_j \in S'$.
    \item $M_j^\ast \lea N_j^\ast$.
    \item $M_{j + 1}^\ast$ is universal over $M_j^\ast$.
    \item $M_0^\ast = M_{j^\ast}'$, $N_0^\ast = N_{j^{\ast \ast}}'$.
    \item $M_j^\ast \lea M_{i_j}$, $N_j^\ast \lea N_{i_j}$.
    \item $a_{i_j} \in M_{j + 1}^\ast$.
    \end{enumerate}

    \underline{This is enough}: Then by construction of $S'$, $j^\ast$, and $j^{\ast \ast}$, $\seq{M_j^\ast : j < \lambda^+}$, $\seq{N_j^\ast : j < \lambda^+}$, $\gtp (a_{i_0} / M_0^\ast; M_1^\ast) $ and $\seq{a_{i_j} : j < \lambda^+}$ witness that $\s$ is not $\goodp$.

    \underline{This is possible}: Let $M_{\lambda^{++}} := \bigcup_{i < \lambda^{++}} M_i$, $N_{\lambda^{++}} := \bigcup_{i < \lambda^{++}} N_i$. We are already given $M_0^\ast$ and $N_0^\ast$ and for $j$ limit we take unions. Now assume inductively that $\seq{M_k^\ast : k \le j}$, $\seq{N_k^\ast : k \le j}$ and $\seq{i_k : k < j}$ are already given, with $M_j^\ast \lea M_{\lambda^{++}}$ and $N_j^\ast \lea N_{\lambda^{++}}$. Let $i_j \in S'$ be big-enough such that $N_{i_j}$ contains $N_j^\ast$, $M_{i_j}$ contains $M_{j}^\ast$, and $i_k < i_j$ for all $k < j$. Such an $i_j$ exists since $S'$ is unbounded. Now let $M^\ast \in \PKp{\lambda^+} M_{\lambda^{++}}$ contain $M_j^\ast$ and $a_{i_j}$ and be saturated. Such an $M^\ast$ exists since $M_{\lambda^{++}}$ is saturated by assumption. Now pick $M_{j + 1}^\ast \in \PKp{\lambda} (M^\ast)$ so that $a_{i_j} \in M_{j + 1}^\ast$ and $M_{j + 1}^\ast$ is universal over $M_j^\ast$. This is possible since $M^\ast$ is saturated. Finally, pick any $N_{j + 1}^\ast \in \PKp{\lambda} (N_{\lambda^{++}}) $ containing $M_{j + 1}^\ast$ and $N_j^\ast$.
\end{proof}

We end this section by noting that in the context of Fact \ref{ext-frame}, $\s$ is decent and hence by Theorem \ref{frame-ext-weak} there is a good $\lambda^+$-frame on $\Ksatp{\lambda^+}_{\lambda^+}$. In fact:

\begin{lem}\label{decent-from-frame}
  Let $\s$ be a good $\lambda$-frame generating an AEC $\K$. If there is a good $\lambda^+$-frame on $\K_{\lambda^+}$, then $\s$ is decent.
\end{lem}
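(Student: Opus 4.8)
Let $\ts$ be the given good $\lambda^+$-frame on $\K_{\lambda^+}$. From the definition of a good frame, $\K_{\lambda^+}$ has amalgamation, joint embedding, no maximal models, and is stable in $\lambda^+$, and $\ts$ enjoys all the good-frame properties, in particular symmetry and local character. In particular there is a saturated model of cardinality $\lambda^+$, unique up to isomorphism, and universal extensions exist in $\K_{\lambda^+}$. Note that the good $\lambda$-frame $\s$ itself will only enter through Theorem \ref{decent-thm}; the work is all carried out inside $\K_{\lambda^+}$.

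The plan is to verify condition (\ref{cond-2}) of Theorem \ref{decent-thm} and then invoke that theorem to conclude that $\s$ is decent. For $\le^\ast$ I would take the relation on $\Ksatpp{K}{\lambda^+}_{\lambda^+}$ defined by $M \le^\ast M'$ if and only if $M = M'$, or $M \lea M'$ and $M'$ is universal over $M$ (among models of cardinality $\lambda^+$). A short check, using amalgamation in $\lambda^+$ for transitivity, shows $\le^\ast$ is a partial order. Condition (\ref{cond-2})(a) then follows by first taking, using stability in $\lambda^+$, an extension of $M_1$ of cardinality $\lambda^+$ that is universal over $M_1$, hence over $M_0$, and then a saturated model of cardinality $\lambda^+$ above it; since ``universal over $M_0$'' is preserved when passing to larger extensions, the resulting model witnesses (\ref{cond-2})(a).

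The crux is condition (\ref{cond-2})(b): if $\seq{M_i : i < \omega}$ is a $\le^\ast$-increasing chain of saturated models of cardinality $\lambda^+$, then $M_\omega := \bigcup_{i < \omega} M_i$ is saturated. Now $M_\omega$ is a limit model of cardinality $\lambda^+$, being the union of an increasing continuous $\omega$-chain in which each successor is universal over its predecessor. On the other hand, the limit model of cardinality $\lambda^+$ and cofinality $\cf{\lambda^+}$ is saturated, by the usual argument: any $\lea$-submodel of cardinality $\le \lambda$ of such a union lies inside one of the pieces, and the next (universal) piece realizes all types over it. So it suffices to know that limit models of cardinality $\lambda^+$ are unique regardless of cofinality --- equivalently, that every limit model of cardinality $\lambda^+$ is saturated. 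This is exactly where the good $\lambda^+$-frame is used: its symmetry property yields uniqueness of limit models in $\lambda^+$ (cf.\ VanDieren \cite{vandieren-symmetry-apal, vandieren-chainsat-apal}, and \cite{vv-symmetry-transfer-afml}). This is precisely the ingredient that VanDieren's theorem on unions of saturated models takes as a hypothesis and that here is supplied for free by $\ts$. With condition (\ref{cond-2}) established, Theorem \ref{decent-thm} gives that $\K$ has a superlimit in $\lambda^+$, hence that $\s$ is decent.

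The main obstacle I anticipate is the derivation of uniqueness of limit models in $\K_{\lambda^+}$ from the abstract frame $\ts$: one must translate the frame's symmetry axiom into the form of symmetry (for non-splitting) used in VanDieren's work, or else reprove uniqueness of limit models directly from the frame axioms (local character together with uniqueness and symmetry). Everything else --- the partial-order check for $\le^\ast$, condition (\ref{cond-2})(a), and the saturation of the $\cf{\lambda^+}$-limit model --- is routine manipulation of universal and saturated models in $\K_{\lambda^+}$.
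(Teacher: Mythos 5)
Your proposal is correct and is essentially the paper's own proof: the same $\le^\ast$ (``universal over or equal to''), the same verification of condition (\ref{cond-2}) of Theorem \ref{decent-thm}, and the same key ingredient of uniqueness of limit models. The ``main obstacle'' you flag is not one --- uniqueness of limit models of arbitrary cofinality in a good frame is Shelah's result (Fact \ref{uq-lim}, \cite[II.4.8]{shelahaecbook}), stated purely in terms of the frame axioms, so no translation to non-splitting symmetry is needed.
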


The proof uses the uniqueness of limit models in good frames, due to Shelah \cite[II.4.8]{shelahaecbook} (see \cite[9.2]{ext-frame-jml} for a proof): 

\begin{fact}\label{uq-lim}
  Let $\s$ be a good $\lambda$-frame, $\delta_1, \delta_2 < \lambda^+$ be limit ordinals. Let $\seq{M_i^\ell : i \le \delta_\ell}$, $\ell = 1,2$, be increasing continuous. If for all $\ell = 1,2$, $i < \delta_\ell$, $M_{i + 1}^\ell$ is universal over $M_i$, then $M_{\delta_1}^1 \cong M_{\delta_2}^2$.
\end{fact}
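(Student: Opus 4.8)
The plan is to verify condition~(\ref{cond-2}) of Theorem~\ref{decent-thm} for the partial order $\le^\ast$ on $\Ksatp{\lambda^+}_{\lambda^+}$ defined by $M \le^\ast N$ if and only if $N$ is universal over $M$ (as computed in $\K_{\lambda^+}$); Theorem~\ref{decent-thm} then immediately yields that $\s$ is decent. Write $\ts$ for the good $\lambda^+$-frame on $\K_{\lambda^+}$ provided by the hypothesis. From the definition of a good frame, $\K_{\lambda^+}$ is an AEC in $\lambda^+$ that is stable in $\lambda^+$ and has amalgamation, joint embedding, and no maximal models; in particular universal extensions exist in $\K_{\lambda^+}$. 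Moreover, applying Fact~\ref{uq-lim} to $\ts$ gives the uniqueness of limit models in $\lambda^+$: any two unions of $\lea$-increasing continuous chains of universal extensions, of any limit length $< \lambda^{++}$, are isomorphic.

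The key sublemma is that the limit model of length $\lambda^+$ is saturated: if $L = \bigcup_{i < \lambda^+} Q_i$, where $\seq{Q_i : i < \lambda^+}$ is $\lea$-increasing continuous in $\K_{\lambda^+}$ with each $Q_{i + 1}$ universal over $Q_i$, then $L$ is saturated. To see this I would check $\lambda^+$-model-homogeneity: given $A \lea L$ and $A \lea A'$ with $\|A\|, \|A'\| \le \lambda$, regularity of $\lambda^+$ together with continuity gives $A \lea Q_i$ for some $i < \lambda^+$; amalgamating $A'$ with $Q_i$ over $A$ (an amalgam that exists by amalgamation in $\lambda$ via a standard directed-system argument along a filtration of $Q_i$) produces a member of $\K_{\lambda^+}$ extending $Q_i$, which $\K$-embeds into $Q_{i + 1}$ over $Q_i$, hence carries $A'$ into $Q_{i + 1} \lea L$ over $A$. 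Thus $L$ is $\lambda^+$-model-homogeneous, so saturated.

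For clause (2a), given $M_0 \lea M_1$ in $\Ksatp{\lambda^+}_{\lambda^+}$ I would build a $\lea$-increasing continuous chain $\seq{P_i : i \le \lambda^+}$ in $\K_{\lambda^+}$ with $P_0 = M_1$ and each $P_{i + 1}$ universal over $P_i$ (possible since universal extensions exist and $\K_{\lambda^+}$ is an AEC in $\lambda^+$), and set $M_2 := P_{\lambda^+}$. Then $M_2$ is a limit model of length $\lambda^+$, hence saturated by the sublemma, so $M_2 \in \Ksatp{\lambda^+}_{\lambda^+}$, and clearly $M_1 \lea M_2$. That $M_2$ is universal over $M_0$, so that $M_0 \le^\ast M_2$, follows from amalgamation in $\lambda^+$: any extension of $M_0$ can first be amalgamated with $M_1$ over $M_0$, then embedded into $M_2$ over $M_1$ using universality of $M_2$ over $M_1$. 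For clause (2b), given a $\le^\ast$-increasing chain $\seq{M_i : i < \omega}$, its union $M_\omega = \bigcup_{i < \omega} M_i$ is a limit model of length $\omega$; by the uniqueness of limit models it is isomorphic to the limit model of length $\lambda^+$, which is saturated by the sublemma, so $M_\omega$ is saturated.

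The main obstacle is the sublemma, i.e.\ proving that a limit model of length $\lambda^+$ is genuinely saturated in $\K$ rather than merely well-behaved relative to the frame $\ts$: the frame $\ts$ controls types over models of cardinality $\lambda^+$, whereas saturation concerns realizing types over submodels of cardinality $\le \lambda$, so the bridge must be built by hand through the universality of the successor steps and amalgamation over small bases. Once the sublemma is in place, clauses (2a) and (2b) are routine, the crucial leverage for (2b) being the uniqueness of limit models coming from Fact~\ref{uq-lim} applied to $\ts$.
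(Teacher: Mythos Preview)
Your proposal does not address the statement you were asked to prove. Fact~\ref{uq-lim} is the \emph{uniqueness of limit models} in a good $\lambda$-frame: two towers of universal extensions, of arbitrary limit lengths below $\lambda^+$, have isomorphic tops. Your argument, by contrast, assumes a good $\lambda^+$-frame $\ts$ on $\K_{\lambda^+}$ and concludes that $\s$ is decent --- this is Lemma~\ref{decent-from-frame}, not Fact~\ref{uq-lim}. Indeed your proof explicitly \emph{invokes} Fact~\ref{uq-lim} as a black box (``applying Fact~\ref{uq-lim} to $\ts$ gives the uniqueness of limit models in $\lambda^+$''), so read as an attempt at Fact~\ref{uq-lim} it is circular.

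In the paper, Fact~\ref{uq-lim} is not proved at all: it is quoted from Shelah \cite[II.4.8]{shelahaecbook} (with a pointer to \cite[9.2]{ext-frame-jml} for a detailed argument) and then used as a tool in the proof of Lemma~\ref{decent-from-frame}. If what you actually intended to prove was Lemma~\ref{decent-from-frame}, then your argument is correct and follows essentially the same route as the paper's: both take $\le^\ast$ to be ``universal over (or equal)'', both argue that the length-$\lambda^+$ limit model in $\K_{\lambda^+}$ is saturated, and both use Fact~\ref{uq-lim} to transfer saturation to the length-$\omega$ limit model, thereby verifying clause~(b) of condition~(\ref{cond-2}) in Theorem~\ref{decent-thm}. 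Your write-up is simply more explicit about the sublemma (the paper just says ``clearly, $N_{\lambda^+}$ is saturated'') and about clause~(a).
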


\begin{proof}[Proof of Lemma \ref{decent-from-frame}]
  By Theorem \ref{decent-thm}, it suffices to show that $\K$ has a superlimit in $\lambda^+$. We could apply two results of VanDieren \cite{vandieren-symmetry-apal, vandieren-chainsat-apal} but we prefer to give a more explicit proof here.

  Let $\seq{N_i : i \le \lambda^+}$ be increasing continuous in $\K_{\lambda^+}$ such that $N_{i + 1}$ is universal over $N_i$ for all $i < \lambda^+$. This is possible since by definition of a good $\lambda^+$-frame, $\K$ is stable in $\lambda^+$ and has amalgamation in $\lambda^+$. Clearly, $N_{\lambda^+}$ is saturated. Moreover by Fact \ref{uq-lim}, $N_{\lambda^+} \cong N_{\omega}$. Thus $N_\omega$ is also saturated. We chose $\seq{N_i : i \le \omega}$ arbitrarily, therefore (\ref{cond-2}) of Theorem \ref{decent-thm} holds with $\le^\ast$ being ``universal over or equal to''. Thus (\ref{cond-3}) there holds: $\s$ is decent, as desired.
\end{proof}

There are other variations on Lemma \ref{decent-from-frame}. For example, if $\s$ is a good $\lambda$-frame generating an AEC $\K$, $\K$ has amalgamation in $\lambda^+$, and $\K$ is $(\lambda, \lambda^+)$-weakly tame, then $\s$ is decent (to prove this, we transfer enough of the good $\lambda$-frame up, then apply results of VanDieren \cite{vandieren-symmetry-apal, vandieren-chainsat-apal}). 

\section{From good frame to weak tameness}

In this section, we look at a sufficient condition (due to Shelah) implying that a good $\lambda$-frame can be extended to a good $\lambda^+$-frame and prove its necessity.

It turns out it is convenient to first extend the good $\lambda$-frame to a good $(\le \lambda, \lambda)$-frame. For this, the next technical property is of great importance, and it is key in Chapter II and III of \cite{shelahaecbook}. The definition below follows \cite[4.1.5]{jrsh875} (but as usual, we work only with type-full frames). Note that we will not use the exact content of the definition, only its consequence. We give the definition only for the benefit of the curious reader. 

\begin{defin}\label{weakly-succ-def}
  Let $\K$ be an abstract class and $\lambda$ be a cardinal.
  
  \begin{enumerate}
  \item\index{amalgam} For $M_0 \lea M_\ell$ all in $\K_\lambda$, $\ell = 1,2$, an \emph{amalgam of $M_1$ and $M_2$ over $M_0$} is a triple $(f_1, f_2, N)$ such that $N \in \K_\lambda$ and $f_\ell : M_\ell \xrightarrow[M_0]{} N$.
  \item\index{equivalence of amalgam} Let $(f_1^x, f_2^x, N^x)$, $x = a,b$ be amalgams of $M_1$ and $M_2$ over $M_0$. We say $(f_1^a, f_2^a, N^a)$ and $(f_1^b, f_2^b, N^b)$ are \emph{equivalent over $M_0$} if there exists $N_\ast \in \K_\lambda$ and $f^x : N^x \rightarrow N_\ast$ such that $f^b \circ f_1^b = f^a \circ f_1^a$ and $f^b \circ f_2^b = f^a \circ f_2^a$, namely, the following commutes:

  \[
  \xymatrix{ & N^a \ar@{.>}[r]^{f^a} & N_\ast \\
    M_1 \ar[ru]^{f_1^a} \ar[rr]|>>>>>{f_1^b} & & N^b \ar@{.>}[u]_{f^b} \\
    M_0 \ar[u] \ar[r] & M_2 \ar[uu]|>>>>>{f_2^a}  \ar[ur]_{f_2^b} & \\
  }
  \]

  Note that being ``equivalent over $M_0$'' is an equivalence relation (\cite[4.3]{jrsh875}).
\item Let $\s$ be a good $(<\alpha, \lambda)$-frame.
  \begin{enumerate}
    \item A \emph{uniqueness triple} in $\s$ is a triple $(\ba, M, N)$ such that $M \leap{\s} N$, $\ba \in \fct{<\alpha}{N}$ and for any $M_1 \geap{\s} M$, there exists a \emph{unique} (up to equivalence over $M$) amalgam $(f_1, f_2, N_1)$ of $N$ and $M_1$ over $M$ such that $\gtp (f_1 (a) / f_2[M_1] ; N_1)$ does not fork over $M$.
    \item $\s$ has the \emph{existence property for uniqueness triples} if for any $M \in \K_{\s}$ and any basic $p \in \gS^{<\alpha} (M)$, one can write $p = \gtp (\ba / M; N)$ with $(\ba, M, N)$ a uniqueness triple.
    \item We say that $\s$ is \emph{weakly successful} if its restriction to types of length one has the existence property for uniqueness triples.
  \end{enumerate}
  \end{enumerate}
\end{defin}

As an additional motivation, we mention the closely related notion of a \emph{domination triple}:

\begin{defin}
  Let $\s$ be a good $(\le \lambda, \lambda)$-frame. A \emph{domination triple} in $\s$ is a triple $(\ba, M, N)$ such that $M \leap{\s} N$, $\ba \in \fct{\le \lambda}{N}$, and whenever $M' \leap{\s} N'$ are such $M \lea M'$, $N \lea N'$, then $\nfs{M}{\ba}{M'}{N'}$ implies $\nfs{M}{N}{M'}{N'}$.
\end{defin}

The following fact shows that domination triples are the same as uniqueness triples once we have managed to get a type-full good $(\le \lambda, \lambda)$-frame. The advantage of uniqueness triples is that they can be defined already inside a good $\lambda$-frame.

\begin{fact}[11.7, 11.8 in \cite{indep-aec-apal}]\label{domin-coincide}
  Let $\s$ be a type-full good $(\le \lambda, \lambda)$-frame. Then in $\s$ uniqueness triples and domination triples coincide.
\end{fact}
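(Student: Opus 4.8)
The plan is to prove the two inclusions separately, the real work being to translate between the ``unique amalgam'' formulation of uniqueness triples and the ``nonforking of the whole model'' formulation of domination triples. For a triple $(\ba, M, N)$ under consideration, fix once and for all an enumeration $\bd$ of $N$ extending $\ba$; since $\s$ is type-full, the length-$\le\lambda$ type $\gtp (\bd / M; N)$ is basic. I will use freely that $\nf_\s$ respects $\K$-embeddings and that every amalgam of $N$ and $M_1$ over $M$ is equivalent over $M$ to one whose second leg is $\id_{M_1}$ (relabel the amalgam so that the image of $M_1$ becomes $M_1$ itself).

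First I would show that every domination triple $(\ba, M, N)$ is a uniqueness triple. Fix $M_1 \geap{\s} M$. The existence of \emph{some} amalgam $(f_1, f_2, N_1)$ of $N$ and $M_1$ over $M$ with $\gtp (f_1 (\ba) / f_2[M_1]; N_1)$ nonforking over $M$ is immediate from the extension property of $\s$ (extend $\gtp (\ba / M; N)$ to a nonforking type over $M_1$ and realize it). For uniqueness, given two such amalgams $(f_1^x, f_2^x, N_1^x)$ for $x = a, b$, normalize so that $f_2^x = \id_{M_1}$, whence $N_1^x \geap{\s} M_1$ and $f_1^x \rest M = \id_M$. Transporting the domination property along $f_1^x$ and applying it with $M' := M_1$, $N' := N_1^x$, the hypothesis $\nfs{M}{f_1^x (\ba)}{M_1}{N_1^x}$ upgrades to $\nfs{M}{f_1^x[N]}{M_1}{N_1^x}$, i.e.\ $\gtp (f_1^x (\bd) / M_1; N_1^x)$ does not fork over $M$. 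Both these types (for $x = a, b$) restrict over $M$ to $\gtp (\bd / M; N)$, so by the uniqueness property of the good $(\le\lambda,\lambda)$-frame they are equal as orbital types over $M_1$; unravelling this equality and using that $\bd$ enumerates $N$ gives precisely that the two normalized amalgams are equivalent over $M$.

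Next I would show that every uniqueness triple $(\ba, M, N)$ is a domination triple. Suppose $M \leap{\s} M' \leap{\s} N'$ and $N \leap{\s} N'$ with $\gtp (\ba / M'; N')$ nonforking over $M$; I must derive that $\gtp (\bd / M'; N')$ does not fork over $M$. Note that $(\id_N, \id_{M'}, N')$ is an amalgam of $N$ and $M'$ over $M$ whose distinguished type $\gtp (\ba / M'; N')$ is nonforking over $M$. On the other hand, since $\gtp (\bd / M; N)$ is basic, the extension property yields a nonforking extension $q' \in \gS^{\le\lambda} (M')$; realizing $q'$ and transporting an isomorphic copy of $N$ into the realizing model produces a second amalgam $(g, \id_{M'}, N_1)$ of $N$ and $M'$ over $M$ with $\gtp (g (\bd) / M'; N_1) = q'$, and by monotonicity $\gtp (g (\ba) / M'; N_1)$ is nonforking over $M$ too. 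By uniqueness of the triple, the two amalgams are equivalent over $M$: there are $N_\ast$ and $\K$-embeddings $h : N' \to N_\ast$ and $h' : N_1 \to N_\ast$ with $h \rest N = h' \circ g$ and $h \rest M' = h' \rest M'$. Then $h (\bd) = h' (g (\bd))$ and $h[M'] = h'[M']$, so $\gtp (h (\bd) / h[M']; N_\ast) = \gtp (h' (g (\bd)) / h'[M']; N_\ast)$, which does not fork over $h'[M] = h[M]$ because $\gtp (g (\bd) / M'; N_1) = q'$ does not fork over $M$ and $\nf_\s$ respects $\K$-embeddings; transporting back along $h$ gives that $\gtp (\bd / M'; N')$ does not fork over $M$.

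I expect the main obstacle to be organizational rather than conceptual: in both directions the single idea is that domination (respectively uniqueness of the amalgam) is exactly what promotes the short tuple $\ba$ to the full enumeration $\bd$ of $N$, after which the \emph{uniqueness} of nonforking extensions of long types closes the first direction and the \emph{existence} of such extensions --- available precisely because $\s$ is type-full --- closes the second. The fiddly points are the normalization lemma for amalgams, the fact that equality of orbital types unwinds to equivalence of amalgams, and the repeated transport of forking statements along $\K$-embeddings that are not literal inclusions; getting the role of type-fullness exactly right is the thing I would double-check.
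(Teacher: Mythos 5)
The statement is quoted in the paper as a Fact from \cite[11.7, 11.8]{indep-aec-apal}, so there is no in-paper proof to compare against; your argument is correct and is essentially the standard one from that reference. Both inclusions are handled the right way: promoting $\ba$ to an enumeration $\bd$ of $N$, then using domination plus \emph{uniqueness} of nonforking extensions of length-$\le\lambda$ types for one direction, and uniqueness of the amalgam plus \emph{existence} of nonforking extensions of $\gtp(\bd/M;N)$ --- which is exactly where type-fullness is needed --- for the other, with the normalization and transport-along-embeddings steps being routine as you note.
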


The importance of weakly successful good frames is that they extend to longer types. This is due to Shelah:

\begin{fact}\label{weakly-succ-extension}
  Let $\s$ be a categorical good $\lambda$-frame. If $\s$ is weakly successful, then there is a unique type-full good $(\le \lambda, \lambda)$-frame extending $\s$.
\end{fact}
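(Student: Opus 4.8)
The plan is to follow Shelah's route in \cite[Chapter II]{shelahaecbook}: rather than extend the nonforking \emph{relation on types} directly, one first builds an auxiliary nonforking \emph{relation on models} and then reads the $(\le\lambda,\lambda)$-frame off it. So I would introduce a $4$-ary relation $\NF (M_0, M_1, M_2, M_3)$ on models of $\K_{\s}$ with $M_0 \lea M_1 \lea M_3$ and $M_0 \lea M_2 \lea M_3$, meant to express that $M_1$ is independent from $M_2$ over $M_0$ inside $M_3$. Weak successfulness supplies the base case: by the existence property for uniqueness triples (Definition \ref{weakly-succ-def}), every basic $1$-type over a model $M$ can be written $\gtp (a / M; N)$ with $(a, M, N)$ a uniqueness triple, and the defining clause of a uniqueness triple says exactly that, over any $M' \gea M$, there is a \emph{unique} (up to equivalence over $M$) amalgam $(f_1, f_2, N')$ of $N$ and $M'$ over $M$ with $\gtp (f_1(a) / f_2[M']; N')$ not $\s$-forking over $M$. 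One then declares $\NF (M_0, M_1, M_2, M_3)$ to hold iff there is an increasing continuous resolution $\seq{M_1^i : i \le \gamma}$ of $M_1$ over $M_0$, elements $a_i \in M_1^{i + 1}$ with each $(a_i, M_1^i, M_1^{i + 1})$ a uniqueness triple, and a matching increasing continuous $\seq{M_3^i : i \le \gamma}$ with $M_3^0 = M_2$, $M_3^\gamma = M_3$, $M_1^i \lea M_3^i$, and $\gtp (a_i / M_3^i; M_3^{i + 1})$ not $\s$-forking over $M_1^i$ for all $i$.

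The core of the existence argument is to show, by induction on the lengths of resolutions, that $\NF$ is well defined (independent of the chosen resolution) and satisfies the short list of ``nonforking relation'' properties that (by Shelah's analysis) also pin it down uniquely: invariance under $\K$-embeddings; monotonicity (restricting or enlarging the four models in the allowed directions); a long-transitivity/concatenation law along chains; existence of a free amalgam over any base; uniqueness of the free amalgam; local character; agreement with $\s$-nonforking on one-element steps; and --- the key point --- symmetry, $\NF (M_0, M_1, M_2, M_3) \Leftrightarrow \NF (M_0, M_2, M_1, M_3)$. Categoricity of $\s$ enters through the conjugation property (Fact \ref{conj-fact}): along a resolution, a nonforking extension of a fixed type over $M_0$ to some $M_1^i$ is conjugate to that type, which lets one transport the one-element steps of the construction and close the back-and-forth. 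Once $\NF$ is available, one defines $\s^+$ on $\K_{\s}$ by declaring, for $M_0 \lea M \lea N$ and $\ba \in \fct{\le\lambda}{N}$, that $\nfs{M_0}{\ba}{M}{N}$ holds iff there is $M' \in \K_{\s}$ with $M_0 \lea M' \lea N$, $\ba \in \fct{\le\lambda}{M'}$ and $\NF (M_0, M', M, N)$; monotonicity and uniqueness of $\NF$ make this independent of $M'$. The good-frame axioms for $\s^+$ are then a translation of the properties of $\NF$: $\K_{\s^+} = \K_{\s}$ is an AEC in $\lambda$ with stability, amalgamation, joint embedding and no maximal models (all inherited from $\s$); non-order holds since the definition mentions only the model $M'$; monotonicity, existence, uniqueness, symmetry, local character and long transitivity come from the corresponding facts about $\NF$; type-fullness holds because the local character of $\NF$ forces every type of length $\le\lambda$ to be $\s^+$-nonforking over a suitable submodel of its base; and $\s^+$ extends $\s$ on $1$-types, since a single uniqueness-triple step in the definition of $\NF$ recovers exactly $\s$-nonforking.

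I expect the proof of symmetry of $\NF$ --- equivalently, that the free amalgam does not depend on which of the two sides is resolved --- to be the main obstacle; it is the one genuinely hard lemma here. It is proved by a pigeonhole/back-and-forth argument on a $\lambda^+ \times \lambda^+$ grid of models: one simultaneously develops resolutions ``from both sides'', uses the uniqueness clause of uniqueness triples to pin down each one-element amalgam, repeatedly invokes conjugation to line up the types occurring along the two resolutions, and then extracts --- by a diagonalization (fixed-point) argument on the grid --- a single configuration witnessing independence in both orders. Everything else (the remaining properties of $\NF$, and the translation into the frame axioms for $\s^+$) is, by comparison, bookkeeping with resolutions.

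For uniqueness, let $\ts$ be any type-full good $(\le\lambda,\lambda)$-frame extending $\s$; I would show $\ts = \s^+$. The relation $\NF_{\ts}$ on models induced by $\ts$ (``$M_1$ independent from $M_2$ over $M_0$'') satisfies the same short list of properties listed above as characterizing $\NF$. The only nonautomatic clause is agreement with $\s$-nonforking on one-element steps; this holds because $\ts$ extends $\s$ on $1$-types and, by Fact \ref{domin-coincide}, the one-element uniqueness triples of $\s$ are precisely the one-element domination triples of $\ts$, so that on such a step $\NF_{\ts}$ is governed by $\s$-nonforking of a single element. Hence $\NF_{\ts} = \NF$; and since $\ts$ is type-full, its full nonforking relation is determined by $\NF_{\ts}$ (via resolutions into one-element steps, using again that $\s$ is weakly successful), so $\ts = \s^+$. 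Thus there is at most one such frame --- this is the $(\le\lambda,\lambda)$-analogue of the canonicity of categorical good $\lambda$-frames (cf.\ Fact \ref{canon-fact}).
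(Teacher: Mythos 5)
Your outline follows the same route as the paper: the paper's proof of this Fact is purely a citation to Shelah's \S II.6 (construction of the nonforking relation $\NF$ on models from resolutions by uniqueness triples, with symmetry as the one genuinely hard step), to II.6.3 for uniqueness, and to \S III.9 for the extension to types of length $\le \lambda$ --- which is exactly the argument you sketch. So the proposal matches the paper's (cited) proof in both structure and key ideas.
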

\begin{proof}
  The uniqueness is \cite[II.6.3]{shelahaecbook}. Existence is the main result of \cite[\S II.6]{shelahaecbook}, although there Shelah only builds a nonforking relation for models satisfying the axioms of a good $(\le \lambda, \lambda)$-frame. How to extend this to all types of length at most $\lambda$ is done in \cite[\S III.9]{shelahaecbook}. An outline of the full argument is in the proof of \cite[12.16(1)]{indep-aec-apal}.
\end{proof}

It is not clear whether the converse of Fact \ref{weakly-succ-extension} holds, but see Fact \ref{reflect-equiv-fact}.

Shelah proved \cite[II.5.11]{shelahaecbook} that a categorical good $\lambda$-frame (generating an AEC $\K$) is weakly successful whenever $2^{\lambda} < 2^{\lambda^+} < 2^{\lambda^{++}}$ and $\K$ has a ``medium'' number of models in $\lambda^{++}$. Shelah has also shown that being weakly successful follows from some stability in $\lambda^+$ and $2^{\lambda} < 2^{\lambda^+}$, see \cite[E.8]{downward-categ-tame-apal} for a proof:

\begin{fact}\label{weakly-succ-gch}
  Let $\s$ be a categorical good $\lambda$-frame generating an AEC $\K$. Assume $2^{\lambda} < 2^{\lambda^+}$. If for every saturated $M \in \K_{\lambda^+}$ there is $N \in \K_{\lambda^+}$ universal over $M$, then $\s$ is weakly successful.
\end{fact}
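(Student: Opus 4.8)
The plan is to argue by contradiction via a non-structure argument, using the weak diamond principle $\Phi_{\lambda^+}$ --- which is exactly what the hypothesis $2^{\lambda} < 2^{\lambda^+}$ provides (Devlin--Shelah). Suppose $\s$ is not weakly successful. Unwinding Definition \ref{weakly-succ-def}, there is a model $M^\ast \in \K_\lambda$ and a basic type $p^\ast \in \gS (M^\ast)$ such that \emph{no} representation $p^\ast = \gtp (a / M^\ast; N)$ makes $(a, M^\ast, N)$ a uniqueness triple; equivalently, for each such representation there is $M_1 \gea M^\ast$ and two amalgams $(f_1, f_2, N_1)$, $(g_1, g_2, N_2)$ of $N$ and $M_1$ over $M^\ast$ which are \emph{not} equivalent over $M^\ast$, yet with $\gtp (f_1 (a) / f_2[M_1]; N_1)$ and $\gtp (g_1 (a) / g_2[M_1]; N_2)$ both nonforking over $M^\ast$. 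The idea is to iterate this branching $\lambda^+$ times in order to produce $2^{\lambda^+}$ pairwise non-isomorphic \emph{saturated} models of cardinality $\lambda^+$, contradicting the fact --- valid here since $\K$ has amalgamation and joint embedding in $\lambda$ and is stable in $\lambda$ --- that there is at most one saturated model in $\K_{\lambda^+}$ up to isomorphism.

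Concretely, I would build a tree $\seq{M_\eta : \eta \in \fct{<\lambda^+}{2}}$ in $\K_\lambda$, $\lea$-increasing and continuous along branches, carrying throughout a fixed copy of $M^\ast$ together with $p^\ast$, so that at each splitting node $\eta$ the two successors $M_{\eta \frown \seq{0}}$ and $M_{\eta \frown \seq{1}}$ realize ``incompatible'' nonforking amalgams in the sense of the witnesses above. In parallel I would run a standard bookkeeping ensuring that for every branch $\eta \in \fct{\lambda^+}{2}$ the union $M_\eta := \bigcup_{i < \lambda^+} M_{\eta \rest i}$ realizes every basic type over every $\lea$-substructure of cardinality $\lambda$ appearing along the branch, hence is saturated by \cite[II.4.3]{shelahaecbook}. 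This is precisely where the hypothesis on universal extensions enters: to realize the required types while still keeping the option of a later split, I pass through a model universal over the (saturated) model built so far, which is available by assumption; without this one cannot carry the saturation requirement through a construction of length $\lambda^+$. Then I would invoke $\Phi_{\lambda^+}$ to predict all candidate isomorphisms between branch-models and choose the splittings so that on a stationary set of levels no such isomorphism survives: since $M^\ast$ and $p^\ast$ sit inside every $M_\eta$, an isomorphism between two distinct branch-models restricted to a $\lambda$-sized level past the first disagreement would transport one nonforking amalgam of $N$ and $M_1$ over $M^\ast$ onto the other, and by the uniqueness of $\s$-nonforking together with the conjugation property (Fact \ref{conj-fact}) this forces the two amalgams to be equivalent over $M^\ast$, contradicting the choice made at that splitting node.

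The main obstacle is the combined bookkeeping in the middle step: one must simultaneously (i) maintain the tree structure and the increasing-continuous discipline, (ii) make every branch-union saturated, and (iii) guarantee that the incompatibility planted at a splitting node genuinely survives ``at the top'', i.e.\ cannot be healed by later amalgamation. Point (iii) is the delicate one; handling it requires threading together the exact up-to-equivalence formulation of uniqueness triples, the uniqueness of $\s$-nonforking, and conjugation, and also arranging that the uniqueness-triple failure over $M^\ast$ can be reproduced over each of the larger models occurring along a branch. This is essentially Shelah's argument in \cite[Chapter II]{shelahaecbook}, organized for the present hypotheses in \cite[E.8]{downward-categ-tame-apal}.
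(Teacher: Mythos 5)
The paper does not actually prove this statement: it is quoted as a Fact from \cite[E.8]{downward-categ-tame-apal} (ultimately an adaptation of \cite[II.5.11]{shelahaecbook}), so the comparison is with that argument. Your overall architecture --- negate the existence property for uniqueness triples, build a tree $\seq{M_\eta : \eta \in \fct{<\lambda^+}{2}}$ in $\K_\lambda$ with pairwise non-equivalent nonforking amalgams at the splits, transfer the failure along branches by conjugation (this is where categoricity of $\s$ enters), and defeat the resulting configuration with the Devlin--Shelah weak diamond --- is the right one. However, two of your specific claims about how the pieces fit together point at genuine gaps.

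First, you locate the use of the hypothesis ``every saturated $M \in \K_{\lambda^+}$ has a universal extension in $\K_{\lambda^+}$'' in the step that makes each branch union saturated. That step does not need it: universal extensions \emph{inside $\K_\lambda$} already exist from stability and amalgamation in $\lambda$ (part of being a good $\lambda$-frame), and interleaving them along each branch makes every branch union saturated. The $\lambda^+$-level hypothesis instead supplies the \emph{structure side} of the non-structure argument: the tree is built on top of a fixed resolution of the saturated model $S \in \K_{\lambda^+}$, so that every branch model is a $\lea$-extension of $S$ of cardinality $\lambda^+$ and hence embeds over $S$ into a fixed universal $N^\ast$; the weak diamond then defeats all such embeddings for a suitably generic branch. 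Second, and relatedly, your intended contradiction --- $2^{\lambda^+}$ pairwise non-isomorphic \emph{saturated} branch models versus uniqueness of the saturated model --- requires defeating \emph{arbitrary} isomorphisms between two distinct branch models. Such an isomorphism need not respect the identification of the levels below the splitting node, nor fix $M^\ast$, so it cannot be directly converted into an equivalence of the two amalgams planted at the first disagreement; making that work is the harder $\mu_{\text{unif}}$-type coding Shelah uses to get many models in $\lambda^{++}$ under the stronger hypothesis $2^{\lambda^+} < 2^{\lambda^{++}}$. Under $2^{\lambda} < 2^{\lambda^+}$ alone, the basic weak diamond only defeats maps from the generic branch into a \emph{fixed} target over a \emph{fixed} base --- which is precisely what the universal extension over $S$ provides, and why the hypothesis is phrased as it is. As written, your sketch invokes the extra hypothesis where it is not needed and omits it where it is essential.
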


Once we have a good $(\le \lambda, \lambda)$-frame, we can define a candidate for a good $\lambda^+$-frame on the saturated models in $\K_{\lambda^+}$. Nonforking in $\lambda^+$ is defined in terms of nonforking in $\lambda$ (in fact one can make sense of this definition even if we only start with a good $\lambda$-frame), but the problem is that we do not know that the class of saturated models in $\K_{\lambda^+}$ has amalgamation. To achieve this, the ordering $\lea$ is changed to a new ordering $\leap{\s^+}$ so that nonforking ``reflects down''. The definition is due to Shelah \cite[III.1.7]{shelahaecbook} but we follow \cite[10.1.6]{jrsh875}:

\begin{defin}\label{succ-def}
  Let $\s = (\K_{\s}, \nf)$ be a type-full good $(\le \lambda, \lambda)$-frame generating an AEC $\K$. We define a pair $\s^+ = (\K_{\s^+}, \nf_{\s^+})$ as follows:

  \begin{enumerate}
  \item $\K_{\s^+} = (K_{\s^+}, \leap{\s^+})$, where:
    \begin{enumerate}
    \item $K_{\s^+}$ is the class of saturated models in $\K_{\lambda^+}$.
    \item For $M, N \in K_{\s^+}$, $M \leap{\s^+} N$ holds if and only if there exists increasing continuous chains $\seq{M_i : i < \lambda^+}$, $\seq{N_i : i < \lambda^+}$ in $\K_\lambda$ such that:

  \begin{enumerate}
  \item $M = \bigcup_{i < \lambda^+} M_i$.
  \item $N = \bigcup_{i < \lambda^+} N_i$.
  \item For all $i < j < \lambda^+$, $\nfs{M_i}{N_i}{M_{j}}{N_{j}}$.
  \end{enumerate}
    \end{enumerate}
  \item For $M_0 \leap{\s^+} M \leap{\s^+} N$ and $a \in N$, $\nf_{\s^+} (M_0, a, M, N)$ holds if and only if there exists $M_0' \in \PKp{\lambda} (M_0)$ such that for all $M' \in \PKp{\lambda} (M)$ and all $N' \in \PKp{\lambda} (N)$, if $M_0' \lea M' \lea N'$ and $a \in N'$, then $\nf_{\s} (M_0', a, M', N')$.
  \end{enumerate}

  For a weakly successful categorical good $\lambda$-frame $\s$, we define $\s^+ := \ts^+$, where $\ts$ is the unique extension of $\s$ to a type-full good $(\le \lambda, \lambda)$-frame (Fact \ref{weakly-succ-extension}).
\end{defin}

As a motivation for the definition of $\leap{\s^+}$, observe that if $\seq{M_i : i < \lambda^+}$, $\seq{N_i : i < \lambda^+}$ are increasing continuous in $\K_\lambda$ such that $M_i \lea N_i$ for all $i < \lambda^+$, then it is known that there is a club $C \subseteq \lambda^+$ such that $M_j \cap N_i = M_i$ for all $i \in C$ and all $j > i$. We would like to conclude the stronger property that $\nfs{M_i}{N_i}{M_j}{N_j}$ for $i \in C$ and $j > i$. If we are working in a superstable elementary class this is true as nonforking has a strong finite character property, but in the more general setup of good frames this is not clear. Thus the property is built into the definition by changing the ordering. This creates a new problem: we do not know whether $\K_{\s^+}$ is an AEC (smoothness is the problematic axiom). Note that there are no known examples of weakly successful good $\lambda$-frame $\s$ where $\leap{\s^+}$ is not $\lea$.

The following general properties of $\s^+$ are known. They are all proven in \cite[\S II.7]{shelahaecbook} but we cite from \cite{jrsh875} since the proofs there are more detailed:

\begin{fact}\label{weakly-succ-facts}
  Let $\s$ be a weakly successful categorical good $\lambda$-frame generating an AEC $\K$.

  \begin{enumerate}
  \item\cite[6.1.5]{jrsh875} If $M, N \in \K_{\s^+}$ and $M \leap{\s^+} N$, then $M \lea N$.
  \item\label{weakly-succ-ac} \cite[6.1.6(b),(d), 7.1.18(a)]{jrsh875}, $\K_{\s^+}$ is an abstract class which is closed under unions of chains of length strictly less than $\lambda^{++}$.
  \item\label{weakly-succ-monot} \cite[6.1.6(c)]{jrsh875} $\K_{\s^+}$ satisfies the following strengthening of the coherence axiom: if $M_0, M_1, M_2 \in \K_{\s^+}$ are such that $M_0 \leap{\s^+} M_2$ and $M_0 \lea M_1 \lea M_2$, then $M_0 \leap{\s^+} M_1$.
  \item\cite[6.1.6(e), 7.1.18(c)]{jrsh875} $\K_{\s^+}$ has no maximal models and amalgamation.
  \item\label{weakly-succ-type-eq} \cite[10.6]{jarden-tameness-apal} Let $M \leap{\s^+} N_\ell$ and let $a_\ell \in N_\ell$, $\ell = 1,2$. Then $\gtpp{\K} (a_1 / M; N_1) = \gtpp{\K} (a_2 / M; N_2)$ if and only if $\gtpp{\K_{\s^+}} (a_1 / M; N_1) = \gtpp{\K_{\s^+}} (a_2 / M; N_2)$. In particular, $\s^+$ is a $\lambda^+$-frame.
  \end{enumerate}
\end{fact}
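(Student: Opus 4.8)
The statement collects five properties of $\s^+$, and the plan is to establish them in the order listed, each feeding into the next, with \cite[\S II.7]{shelahaecbook} and \cite[\S 10]{jarden-tameness-apal} doing the heaviest lifting. Part (1) is a smoothness argument: from witnessing chains $\seq{M_i : i < \lambda^+}$, $\seq{N_i : i < \lambda^+}$ for $M \leap{\s^+} N$ (with $M_i \lea N_i$ and $N_i \lea N_j$ for $i < j$, as provided by Definition \ref{succ-def} and monotonicity of the underlying type-full good $(\le\lambda,\lambda)$-frame of Fact \ref{weakly-succ-extension}) one gets $M_i \lea N_{i+1} \lea N$ for all $i$, so since $M = \bigcup_i M_i$ smoothness of $\K$ gives $M \lea N$. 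For Part (3), given $M_0 \leap{\s^+} M_2$ with $M_0 \lea M_1 \lea M_2$ in $\K_{\s^+}$, I would fix witnessing chains $\seq{M^0_i},\seq{M^2_i}$ for $M_0 \leap{\s^+}M_2$, interpolate by a standard resolution argument (LST, coherence, a club) an increasing continuous chain $\seq{M^1_i}$ in $\K_\lambda$ with $M^0_i \lea M^1_i \lea M^2_i$ and $\bigcup_i M^1_i = M_1$, and then get $\nfs{M^0_i}{M^1_i}{M^0_j}{M^1_j}$ from $\nfs{M^0_i}{M^2_i}{M^0_j}{M^2_j}$ by monotonicity (shrinking the realized tuple to $M^1_i$) together with invariance of nonforking under the $\K$-embedding $M^1_j \hookrightarrow M^2_j$ (shrinking the ambient model); so $\seq{M^0_i},\seq{M^1_i}$ witness $M_0 \leap{\s^+} M_1$. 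Refining two families of witnessing chains into one and splicing their nonforking data via the long transitivity of the $(\le\lambda,\lambda)$-frame gives transitivity of $\leap{\s^+}$; with the remaining partial-order axioms and isomorphism-invariance routine, $\K_{\s^+}$ is an abstract class.

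For Part (2) it then remains to prove closure under unions of chains of length $\delta < \lambda^{++}$. The union $M$ of such a chain $\seq{M^\beta : \beta<\delta}$ of size-$\lambda^+$ saturated models has size $\lambda^+$ and lies in $\K$, so only $\lambda^+$-saturation is at issue, and one may assume $\delta = \cf{\delta}$. If $\cf{\delta} = \lambda^+$ this is easy via the model-homogeneity characterization of saturation, since every size-$\le\lambda$ $\K$-submodel of $M$ already sits $\lea$-inside some (saturated) $M^\beta$ by coherence. If $\cf{\delta}\le\lambda$, one resolves $M$ and the $M^\beta$'s by compatible chains of size-$\lambda$ models and runs a diagonal argument: given a size-$\lambda$ submodel $P\lea M$ and $p\in\gS(P)$, local character of the $(\le\lambda,\lambda)$-frame reflects $p$ to a bounded stage, and the ``nonforking reflects down'' condition built into $\leap{\s^+}$ is exactly what lets one pull a realization of $p$ back into some $M^\beta$, hence into $M$; so $M$ realizes all types over its small submodels and is saturated by \cite[II.4.3]{shelahaecbook}. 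For Part (4), no maximal models comes from resolving a given $M\in\K_{\s^+}$ as $\bigcup_{i<\lambda^+}M_i$ with $M_{i+1}$ universal over $M_i$ and building alongside a chain $\seq{N_i}$ with $M_i \lta N_i$, $N_i\lea N_{i+1}$, $M_{i+1}\cap N_i = M_i$ on a club, and $\nfs{M_i}{N_i}{M_{i+1}}{N_{i+1}}$ (by existence of nonforking in the $(\le\lambda,\lambda)$-frame), interleaving type realizations so $N:=\bigcup_i N_i$ is saturated; disjointness forces $N_i\cap M = M_i$, so $N\neq M$, and $\seq{M_i},\seq{N_i}$ witness $M \leap{\s^+} N$. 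Amalgamation comes from taking, for $M \leap{\s^+} N_\ell$ ($\ell=1,2$), a common resolution $\seq{M_i}$ of $M$ (using Part (3)) and witnessing resolutions $\seq{N^\ell_i}$ of $N_\ell$, building by induction on $i$ a level-by-level nonforking amalgam $\seq{N^3_i}$ of $N^1_i,N^2_i$ over $M_i$ (via symmetry, existence and uniqueness of the $(\le\lambda,\lambda)$-frame), continuous and coherent enough that the conditions of Definition \ref{succ-def} hold; then $N_3:=\bigcup_iN^3_i$ has size $\lambda^+$, is saturated by the uniqueness of limit models (Fact \ref{uq-lim}), and carries $\leap{\s^+}$-embeddings of $N_1,N_2$ over $M$.

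For Part (5), the direction ``$\Leftarrow$'' is immediate: by Part (1) every $\leap{\s^+}$-embedding is a $\lea$-embedding and $\K_{\s^+}\subseteq\K$, so a model together with embeddings witnessing $\gtpp{\K_{\s^+}}(a_1/M;N_1)=\gtpp{\K_{\s^+}}(a_2/M;N_2)$ also witnesses the corresponding $\K$-equality. The substance is ``$\Rightarrow$'': assuming $\gtpp{\K}(a_1/M;N_1)=\gtpp{\K}(a_2/M;N_2)$, I would first amalgamate $N_1,N_2$ over $M$ inside $\K_{\s^+}$ (Part (4)) to reduce to $N_1 = N_2 =: N^\ast \in \K_{\s^+}$ with $\gtpp{\K}(a_1/M;N^\ast)=\gtpp{\K}(a_2/M;N^\ast)$, then take a compatible resolution $\seq{M_i},\seq{N_i}$ witnessing $M\leap{\s^+}N^\ast$ with $a_1,a_2\in N_0$, observe the hypothesis restricts to $\gtp(a_1/M_i;N_i)=\gtp(a_2/M_i;N_i)$ for all $i$, and use local character, monotonicity and uniqueness of the $(\le\lambda,\lambda)$-frame to arrange that $a_1$ and $a_2$ both realize, over every $M_i$, the nonforking extension of one fixed $\lambda$-type; a back-and-forth then builds a chain of $\lambda$-sized maps whose union is a $\leap{\s^+}$-embedding of $N^\ast$ over $M$ sending $a_1$ to $a_2$, the uniqueness-triple/domination structure (Fact \ref{domin-coincide}) being what forces the glued map to preserve $\leap{\s^+}$ rather than merely $\lea$, and Fact \ref{uq-lim} keeping the ambient $\lambda^+$-models saturated. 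The closing ``in particular'' then follows: $\K_{\s^+}$ is by Part (2) an abstract class with $\K_{\s^+}=(\K_{\s^+})_{\lambda^+}$, and invariance of $\nf_{\s^+}$ under $\K_{\s^+}$-embeddings reads off from Definition \ref{succ-def} together with Part (1) and the type-equivalence just proved, so $\s^+$ satisfies all clauses of the definition of a $\lambda^+$-frame.

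I expect the real difficulty to sit in the amalgamation clause of Part (4), the closure-under-unions clause of Part (2), and the ``$\Rightarrow$'' direction of Part (5): each needs a transfinite construction maintaining simultaneously (i) the delicate ``nonforking reflects down'' witnessing condition that defines $\leap{\s^+}$, (ii) saturation of the limiting $\lambda^+$-models via uniqueness of limit models, and (iii) control of orbital types throughout; coordinating (i)--(iii) is exactly the content of \cite[\S II.7]{shelahaecbook} and \cite[\S 10]{jarden-tameness-apal}, and I do not see a shortcut.
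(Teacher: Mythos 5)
You should first be aware that the paper does not prove this statement at all: it is labelled a \emph{Fact}, with each clause carrying a precise citation into \cite{jrsh875} and \cite{jarden-tameness-apal}, and the author imports it wholesale. So the only possible comparison is between your reconstruction and the external proofs. Judged on those terms, your clauses (1) and (3) are essentially right and follow the standard arguments: (1) really is just smoothness applied to $M=\bigcup_i M_i$ with $M_i \lea M_j \lea N_j \lea N$, and (3) is the interpolation-of-resolutions argument, where you correctly observe that shrinking the ambient model from $M^2_j$ to $M^1_j$ needs invariance of $\nf$ under the inclusion $M^1_j \hookrightarrow M^2_j$ and not merely the stated monotonicity clause. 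For clauses (2), (4) and (5), however, what you have written is a plan rather than a proof. Closure of $\K_{\s^+}$ under unions when $\cf{\delta}\le\lambda$, the amalgamation construction, and the forward direction of the type-equality in (5) each require a transfinite construction in which the witnessing condition of Definition \ref{succ-def} is preserved through limit stages; this is exactly where symmetry, long transitivity and the uniqueness/domination triples of the $(\le\lambda,\lambda)$-frame do real work, and you name the right ingredients without carrying out any of the constructions --- your closing paragraph concedes as much. I see no identifiably false step, but as written the proposal does not discharge the hard clauses: it ends, like the paper, by deferring to \cite[\S\S 6--7]{jrsh875} and \cite[\S 10]{jarden-tameness-apal}. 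That is a perfectly legitimate way to handle an imported Fact, but then the honest move is to cite those results outright rather than present the sketch as a proof.
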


We will use the following terminology, taken from \cite[3.7(2)]{downward-categ-tame-apal}:

\begin{defin}\label{reflect-def}
  Let $\s$ be a type-full good $(\le \lambda, \lambda)$-frame generating an AEC $\K$. We say that $\s$ \emph{reflects down} if $M \lea N$ implies $M \leap{\s^+} N$ for all $M, N \in \K_{\s^+}$. We say that $\s$ \emph{almost reflects down} if $\K_{\s^+}$ is an AEC in $\lambda^+$ (see the bottom of Definition \ref{aec-def}). We say that a weakly successful good $\lambda$-frame \emph{[almost] reflects down} if its extension to a type-full good $(\le \lambda, \lambda)$-frame [almost] reflects down, see Fact \ref{weakly-succ-extension}.
\end{defin}

Shelah uses the less descriptive ``successful'':

\begin{defin}[III.1.1 in \cite{shelahaecbook}]
  We say that a good $\lambda$-frame $\s$ is \emph{successful} if it is weakly successful and almost reflects down.
\end{defin}

The point of this definition is that if it holds, then $\s$ can be extended to a good $\lambda^+$-frame. Moreover $\s$ is successful when there are few models in $\lambda^{++}$:

\begin{fact}[III.1.9 in \cite{shelahaecbook}]\label{good-ext-thm}
  If $\s$ is a successful categorical good $\lambda$-frame, then $\s^+$ is a $\goodp$ $\lambda^+$-frame.
\end{fact}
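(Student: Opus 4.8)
The plan is to check, clause by clause, the axioms of a good $\lambda^+$-frame for the pair $\s^+ = (\K_{\s^+}, \nf_{\s^+})$ of Definition \ref{succ-def}, and then to rule out the configuration forbidden by $\goodp$. For the class-level axioms, Fact \ref{weakly-succ-facts} already supplies that $\K_{\s^+}$ is an abstract class, closed under unions of increasing chains of length $<\lambda^{++}$, with amalgamation, no maximal models, and the strong coherence property (\ref{weakly-succ-monot}); it is nonempty (it contains the unique saturated model of $\K$ of cardinality $\lambda^+$, which exists since $\s$ is stable in $\lambda$ and has amalgamation), it is categorical in $\lambda^+$ by uniqueness of saturated models, and joint embedding then follows from no maximal models. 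Since $\s$ is successful it almost reflects down, i.e.\ $\K_{\s^+}$ is an AEC in $\lambda^+$; this supplies the one remaining chain axiom, smoothness. Stability of $\K_{\s^+}$ in $\lambda^+$ follows once local character and uniqueness are in hand: by Fact \ref{weakly-succ-facts}(\ref{weakly-succ-type-eq}) the types of $\s^+$ over $M$ agree with the $\gtpp{\K}$-types over $M$, each of them does not $\s^+$-fork over one of the $\lambda^+$-many $\lambda$-sized $\K$-submodels of $M$, and it is determined there by its restriction, of which there are at most $\lambda$.

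For the nonforking axioms, non-order, monotonicity, disjointness and density of basic types (hence type-fullness of $\s^+$) are immediate from the definition of $\nf_{\s^+}$ and the matching properties of the type-full good $(\le\lambda,\lambda)$-frame $\ts$ extending $\s$ (Fact \ref{weakly-succ-extension}). For existence, given a basic $p \in \gS(M)$ and $M \leap{\s^+} N$, I would fix resolutions $M = \bigcup_{i<\lambda^+} M_i$, $N = \bigcup_{i<\lambda^+} N_i$ as in Definition \ref{succ-def} with $p$ not $\s$-forking over $M_0$, take the nonforking extension of $p \rest M_0$ to each $N_i$ using existence in $\ts$, and read off a nonforking extension of $p$ to $\gS(N)$ after checking coherence via uniqueness in $\ts$. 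Uniqueness and local character come from reflecting to cardinality $\lambda$: two types over $N$ not $\s^+$-forking over $M$ and agreeing on $M$ reflect to $\lambda$-sized nonforking types with a common base, which coincide by uniqueness of $\s$ and so are equal by Fact \ref{weakly-succ-facts}(\ref{weakly-succ-type-eq}); and for local character one resolves a $\leap{\s^+}$-increasing continuous chain into $\lambda$-sized pieces compatible with the witnessing towers, applies local character of $\s$ to a $\lambda$-sized restriction of the given type, and transfers the conclusion back up, treating $\cf{\delta} = \lambda^+$ and $\cf{\delta} < \lambda^+$ separately. Long transitivity is not required, the type length being $<\lambda^+$.

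Symmetry is the step I expect to be the main obstacle. The right tool is the nonforking relation $\leanf$ on $\lambda$-sized models that weak successfulness produces: a uniqueness triple is the same as a domination triple (Fact \ref{domin-coincide}), $\leanf$ is symmetric — this symmetry being the deepest output of the construction for weakly successful frames — and every $\leap{\s^+}$-extension decomposes into a $\leanf$-tower of $\lambda$-sized models. Using these, I would witness one of the two symmetry configurations of $\s^+$ by a $\lambda$-sized $\leanf$-diagram, interchange the two elements in question using symmetry of $\leanf$ together with domination, and take unions back up to cardinality $\lambda^+$ to recover the other configuration.

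Finally, for $\goodp$, suppose the forbidden configuration occurs for $\s^+$: $\leap{\s^+}$-increasing continuous chains $\seq{M_i : i<\lambda^{++}}$, $\seq{N_i : i<\lambda^{++}}$ in $\K_{\s^+}$, a basic $p$ over $M_0$, elements $\seq{a_i : i<\lambda^{++}}$, and $\bigcup_i M_i$ saturated. Fixing $\lambda$-sized resolutions of $M_0$ and $N_0$, a pruning argument adapting \cite[III.1.6(2)]{shelahaecbook} extracts an unbounded set of indices and a single $\lambda$-sized base over which the $\s$-nonforking behaviour of the relevant types is uniform; the reflecting-down property available from successfulness then converts the assumption that $\gtp(a_{i+1}/N_0;N_{i+2})$ $\s^+$-forks over $M_0$ into an $\s$-forking statement over $\lambda$-sized models, which contradicts the choice of $a_{i+1}$ as a realization of the nonforking extension of $p$. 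The delicate point throughout is the bookkeeping needed to keep these $\lambda$-sized reflections coherent along the $\lambda^{++}$-chain, using in particular that $\bigcup_i M_i$ is saturated, so that the resulting $\lambda^+$-level configuration genuinely has the shape forbidden by $\goodp$.
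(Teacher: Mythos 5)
First, a point of comparison: the paper does not prove this statement at all. It is quoted as a black box from Shelah's book (III.1.9 of \cite{shelahaecbook}), where the proof occupies a substantial portion of Chapters II.7--8 and III.1. So your proposal is not competing with an in-paper argument but with a long external one, and at the level of detail given it is a roadmap rather than a proof. The roadmap does identify the right ingredients --- Fact \ref{weakly-succ-facts} for the class-level axioms, ``almost reflects down'' for smoothness, reflection to $\lambda$-sized models for existence, uniqueness and local character, and the model-level nonforking relation for symmetry --- but the two genuinely hard clauses, symmetry and $\goodp$, are exactly the ones you leave as declarations of intent (``I would witness \ldots interchange \ldots take unions back up''). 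Nothing in the proposal actually establishes them. A smaller inaccuracy: density of basic types (type-fullness) for $\s^+$ is not ``immediate from the definition'' either; producing a single $\lambda$-sized base $M_0'$ that works simultaneously for \emph{all} larger $\lambda$-sized pairs requires a club/pruning argument using local character of $\s$, not just unwinding Definition \ref{succ-def}.

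The $\goodp$ clause contains a step that would fail as written. You push the forbidden $\lambda^{++}$-configuration down to $\lambda$-sized models and then claim the resulting $\s$-forking statement ``contradicts the choice of $a_{i+1}$ as a realization of the nonforking extension of $p$.'' It does not: in the $\goodp$ configuration, $a_{i+1}$ realizes the nonforking extension of $p$ to $M_{i+1}$ \emph{while} $\gtp (a_{i+1} / N_0; N_{i+2})$ forks over $M_0$, and these two statements are locally compatible --- that is precisely why $\goodp$ is a nontrivial property rather than a consequence of uniqueness. What your reduction would actually produce is a witness to the failure of $\goodp$ for $\s$ itself; compare the unnumbered $\goodp$-transfer theorem proved after Theorem \ref{frame-ext-weak}, which runs exactly this pruning argument but \emph{assumes} $\s$ is $\goodp$. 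Here $\goodp$ for $\s$ is not among the hypotheses, and Fact \ref{reflect-equiv-fact} treats it as genuinely additional information beyond successfulness, so the reduction lands you nowhere. Shelah's actual argument must exploit the definition of $\leap{\s^+}$ via $\nf$-towers (in particular long transitivity of nonforking for models) and the saturation of $\bigcup_i M_i$ directly, not merely reflect types down and hope for an outright contradiction at the $\lambda$-level.
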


\begin{fact}[II.8.4, II.8.5 in \cite{shelahaecbook}, or see 7.1.3 in \cite{jrsh875}]\label{succ-nm}
  Let $\s$ be a weakly successful categorical good $\lambda$-frame generating the AEC $\K$. If $\Ii (\K, \lambda^{++}) < 2^{\lambda^{++}}$, then $\s$ is successful.
\end{fact}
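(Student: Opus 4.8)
This is a cited result; the intended argument is Shelah's non-structure machinery, which I would organize as follows. The plan is to prove the contrapositive: assuming $\s$ is weakly successful but \emph{not} successful, I would construct $2^{\lambda^{++}}$ pairwise non-isomorphic models in $\K_{\lambda^{++}}$. First I would pass to the unique type-full good $(\le \lambda, \lambda)$-frame $\ts$ extending $\s$ (Fact \ref{weakly-succ-extension}, using categoricity) and work with $\s^+ = \ts^+$ of Definition \ref{succ-def}. By Fact \ref{weakly-succ-facts}, $\K_{\s^+}$ is already an abstract class, closed under unions of chains of length $< \lambda^{++}$, satisfying a strengthening of coherence and having amalgamation and no maximal models; hence the sole obstruction to $\s$ almost reflecting down is a failure of $\leap{\s^+}$-smoothness. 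Unwinding Definition \ref{succ-def} and using long transitivity of $\ts$ to dispose of chains of small cofinality, I would extract a ``defect gadget'': a $\leap{\s^+}$-increasing continuous chain $\seq{M_i : i \le \lambda^+}$ in $\K_{\s^+}$ together with $N \in \K_{\s^+}$ such that $M_i \leap{\s^+} N$ for all $i < \lambda^+$ while $M_{\lambda^+} := \bigcup_{i < \lambda^+} M_i$ satisfies $M_{\lambda^+} \lea N$ (by smoothness of $\K$) but $M_{\lambda^+} \not\leap{\s^+} N$.

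Next I would run a stationary-set construction. Fix a stationary $E \subseteq \{ \alpha < \lambda^{++} : \cf{\alpha} = \lambda^+ \}$. For each $S \subseteq E$, build a $\lea$-increasing continuous chain $\seq{M_S^\alpha : \alpha < \lambda^{++}}$ of models in $\K_{\lambda^+}$ with union $M_S \in \K_{\lambda^{++}}$, arranging by bookkeeping that: at every limit point not in $S$ the filtration is $\leap{\s^+}$-continuous, whereas at each $\alpha \in S$ a cofinal copy of the defect gadget $\seq{M_i : i \le \lambda^+}$ has been threaded through $\seq{M_S^\beta : \beta < \alpha}$, so that the chain is \emph{not} $\leap{\s^+}$-continuous at $\alpha$ --- and remains so in every larger model of the construction. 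The crucial feature is that, since $\leap{\s^+}$ respects isomorphisms and $\s^+$-Galois types agree with $\K$-Galois types (Fact \ref{weakly-succ-facts}(1),(5)), the ``$\leap{\s^+}$-skeleton'' of $M_S$, and in particular the set $S$ of stages at which an unrepairable discontinuity was forced, is an invariant of the isomorphism type of $M_S$ modulo the nonstationary ideal on $\lambda^{++}$.

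To separate the models I would reflect isomorphisms to a club: an isomorphism $f : M_S \cong M_{S'}$ satisfies $f[M_S^\alpha] = M_{S'}^\alpha$ for all $\alpha$ in some club $C \subseteq \lambda^{++}$ (by a standard closure argument), and then at any $\alpha \in C \cap (S \triangle S')$ exactly one of the two filtrations is $\leap{\s^+}$-discontinuous at $\alpha$, which is impossible. Hence $M_S \cong M_{S'}$ forces $S = S'$ modulo the club filter. Finally, by Solovay's splitting theorem $E$ decomposes into $\lambda^{++}$ pairwise disjoint stationary sets, so $E$ has $2^{\lambda^{++}}$ subsets that are pairwise inequivalent modulo the nonstationary ideal; the corresponding models $M_S$ are then pairwise non-isomorphic, and there are $2^{\lambda^{++}}$ of them, contradicting $\Ii(\K, \lambda^{++}) < 2^{\lambda^{++}}$.

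The step I expect to be the genuine obstacle is the \emph{permanence} claim implicit in the skeleton being an invariant: that a threaded-in non-smooth configuration truly cannot be healed --- neither by continuing the construction to a larger model, nor by a cunning choice of the isomorphism $M_S \cong M_{S'}$. Proving this requires the full weakly-successful calculus of Chapter II of \cite{shelahaecbook}: nonforking \emph{of models} underlying $\s^+$, uniqueness triples (equivalently, by Fact \ref{domin-coincide}, domination triples), reduced towers and their continuity, and the fine interaction of $\leap{\s^+}$ with nonforking. I would therefore isolate and prove this permanence lemma first; the remaining tree construction and stationary-set separation then follow the standard ``many models'' template, exactly as in \cite[II.8.4, II.8.5]{shelahaecbook} (or \cite[7.1.3]{jrsh875}).
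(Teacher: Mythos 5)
The paper does not prove this statement: it is imported as a black-box \emph{Fact}, with the proof residing entirely in \cite[II.8.4, II.8.5]{shelahaecbook} and \cite[7.1.3]{jrsh875}. So there is no in-paper argument to measure your proposal against; what can be assessed is whether your sketch is a faithful and self-supporting reconstruction of the cited nonstructure argument. Architecturally it is the right template: contrapositive, reduction of ``not successful'' to a failure of $\leap{\s^+}$-smoothness (the paper itself remarks that smoothness is the only problematic axiom for $\K_{\s^+}$ being an AEC in $\lambda^+$), a defect chain of length $\lambda^+$, stationary-set coding, reflection of isomorphisms to a club, and Solovay splitting. This is genuinely the shape of Shelah's argument.

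But as a proof it has an acknowledged gap at the decisive step, and moreover I think you have located the difficulty in slightly the wrong place. The ``permanence'' you worry about --- that a threaded-in defect cannot be healed by further extension or by a cunning isomorphism --- is actually the cheap part: by the strengthened coherence of $\leap{\s^+}$ (Fact \ref{weakly-succ-facts}(\ref{weakly-succ-monot})), $M \not\leap{\s^+} N$ together with $M \lea N \lea N'$ forces $M \not\leap{\s^+} N'$, and $\leap{\s^+}$ is isomorphism-invariant, so discontinuity at a club point is an invariant of the pair of filtration models matched by the isomorphism. The genuinely hard points, which your sketch does not address, are: (i) at stages $\alpha \in E \setminus S$ you must guarantee there is \emph{no} accidental $\leap{\s^+}$-discontinuity, even though you are assuming smoothness fails somewhere --- this requires building the chain so that the union at such $\alpha$ is a $\leap{\s^+}$-lower bound of everything constructed later; and (ii) at stages $\alpha \in S$ you must \emph{transplant} the one defect gadget on top of an arbitrary already-built $M_S^{<\alpha}$ compatibly with all earlier commitments, which needs the saturation/uniqueness calculus (conjugation, NF-towers, uniqueness triples) rather than just the existence of a single bad chain. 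It is precisely these two points that occupy II.8.4 and II.8.5 (note that Shelah in fact splits the nonstructure argument into two separate defects, one for non-smoothness and one for the $(\ast\ast)$-property from which the paper extracts ``decent''; your single-gadget reduction glosses over this). Since your write-up explicitly defers this core to the very references being reconstructed, the proposal should be regarded as a correct plan with the essential content still outstanding, which is consistent with the paper's decision to cite the result rather than prove it.
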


The downside of working only with a successful good $\lambda$-frame is that the ordering of $\K_{\s^+}$ may not be $\lea$ anymore. Thus the AEC generated by $\K_{\s^+}$ may be different from the one generated by $\K_{\s}$. For example it may fail to have arbitrarily large models even if the original one does. This is why we will focus on good frames that \emph{reflect down}, i.e.\ $\leap{\s^+}$ is just $\lea$. Several characterizations of this situation are known. (\ref{reflect-equiv-4}) implies (\ref{reflect-equiv-1}) below is due to Jarden and all the other implications are due to Shelah (but as usual we mostly quote from \cite{jrsh875}):

\begin{fact}\label{reflect-equiv-fact}
  Let $\s$ be a categorical good $\lambda$-frame. The following are equivalent:

  \begin{enumerate}
  \item\label{reflect-equiv-1} $\s$ extends to a type-full good $(\le \lambda, \lambda)$-frame which reflects down.
  \item\label{reflect-equiv-2} $\s$ is successful and $\goodp$ (see Definition \ref{goodp-def}).
  \item\label{reflect-equiv-3} $\s$ is successful and decent (see Definition \ref{decent-def}).
  \item\label{reflect-equiv-4} $\s$ is weakly successful and generates an AEC $\K$ which is $(\lambda, \lambda^+)$-weakly tame and so that $\Ksatp{\lambda^+}_{\lambda^+}$ has amalgamation.
  \end{enumerate}
\end{fact}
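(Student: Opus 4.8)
The plan is to verify the cycle $(\ref{reflect-equiv-1})\Rightarrow(\ref{reflect-equiv-2})\Rightarrow(\ref{reflect-equiv-3})\Rightarrow(\ref{reflect-equiv-1})$ together with $(\ref{reflect-equiv-1})\Leftrightarrow(\ref{reflect-equiv-4})$. The statement is a compilation, most of it going back to Shelah, the genuinely new ingredient being the implication $(\ref{reflect-equiv-4})\Rightarrow(\ref{reflect-equiv-1})$ due to Jarden, which will carry essentially all of the work. The arrow $(\ref{reflect-equiv-2})\Rightarrow(\ref{reflect-equiv-3})$ is immediate: ``successful'' appears in both conditions, and ``$\goodp$'' implies ``decent'' by Lemma \ref{goodp-decent}.

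For $(\ref{reflect-equiv-1})\Rightarrow(\ref{reflect-equiv-2})$, I would first observe that the existence of a type-full good $(\le\lambda,\lambda)$-frame $\ts$ extending $\s$ forces $\s$ to be weakly successful: by the theory of good $(\le\lambda,\lambda)$-frames such a frame has the existence property for domination, equivalently (Fact \ref{domin-coincide}) uniqueness, triples of length one, and the restriction of this to $\s$ is weak successfulness; in particular $\ts$ is the canonical extension of Fact \ref{weakly-succ-extension}. Reflecting down (Definition \ref{reflect-def}) then says that $\leap{\s^+}$ agrees with $\lea$ on $\K_{\s^+}$ (the inclusion $\leap{\s^+}\subseteq\lea$ being Fact \ref{weakly-succ-facts}(1)); since $\K_{\s^+}$ is closed under unions of increasing chains of length $<\lambda^{++}$ (Fact \ref{weakly-succ-facts}(2)) and, once $\leap{\s^+}=\lea$, inherits coherence and smoothness from the ambient AEC, it is an AEC in $\lambda^+$. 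Thus $\s$ almost reflects down, hence is successful. That it is moreover $\goodp$ I would extract from Shelah's construction of $\s^+$ in \cite[\S III.1]{shelahaecbook} under the identification $\leap{\s^+}=\lea$: the nonforking pattern defining $\leap{\s^+}$ then holds, on a club, for \emph{every} $\K_\lambda$-filtration of a pair of saturated models of cardinality $\lambda^+$, and this is incompatible with a $\goodp$-configuration. For $(\ref{reflect-equiv-3})\Rightarrow(\ref{reflect-equiv-1})$ one runs Shelah's analysis of successful frames in the other direction: decentness is precisely what is needed to see that $\Ksatp{\lambda^+}_{\lambda^+}$, ordered by $\lea$, carries the same structure as $\K_{\s^+}$ — via Theorem \ref{decent-thm}, decentness makes $\Ksatp{\lambda^+}_{\lambda^+}$ an AEC in $\lambda^+$ — so that the successful frame $\s^+$ lives on $\Ksatp{\lambda^+}_{\lambda^+}$ with its natural ordering, which is exactly reflecting down (see \cite[\S III.1]{shelahaecbook}, \cite{jrsh875}).

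For $(\ref{reflect-equiv-1})\Rightarrow(\ref{reflect-equiv-4})$: reflecting down makes $\s$ successful and, since $\leap{\s^+}=\lea$, turns $\s^+$ into a good $\lambda^+$-frame on $\Ksatp{\lambda^+}_{\lambda^+}$ with the natural ordering. Amalgamation in $\Ksatp{\lambda^+}_{\lambda^+}$ is then part of the definition of a good frame, and $(\lambda,\lambda^+)$-weak tameness follows from the fact that $\s^+$-nonforking is determined $\K_\lambda$-locally (Definition \ref{succ-def}) together with the uniqueness and existence properties of $\s$; this is again Shelah's analysis of successful frames (\cite[\S III.1]{shelahaecbook}).

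Finally, $(\ref{reflect-equiv-4})\Rightarrow(\ref{reflect-equiv-1})$ is Jarden's theorem \cite{jarden-tameness-apal}, and is where the real content lies. Being weakly successful and categorical, $\s$ extends to the canonical type-full good $(\le\lambda,\lambda)$-frame $\ts$ (Fact \ref{weakly-succ-extension}), and one must show $\ts$ reflects down: given saturated $M\lea N$ in $\K_{\lambda^+}$, build increasing continuous $\K_\lambda$-filtrations $M=\bigcup_{i<\lambda^+}M_i$, $N=\bigcup_{i<\lambda^+}N_i$ with $M_i\lea N_i$ and $\nfs{M_i}{N_i}{M_j}{N_j}$ for all $i<j<\lambda^+$. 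Starting from arbitrary such filtrations, a pruning argument produces a club along which $M_j\cap N_i=M_i$; the hard part — and the reason Shelah's original argument needed a ``few models'' hypothesis in $\lambda^{++}$ — is to promote this to the nonforking pattern holding at \emph{every} stage, not merely cofinally. This is exactly where $(\lambda,\lambda^+)$-weak tameness is used: it allows one to certify $\s$-nonforking of the $\lambda$-sized pieces from their $\lambda$-sized restrictions, so that the construction can be continued through the successor steps, while amalgamation in $\Ksatp{\lambda^+}_{\lambda^+}$ supplies the saturated ambient models in which the needed nonforking extensions can be realized. I expect that keeping the nonforking pattern alive across this back-and-forth, using only weak tameness, is the single real obstacle; the remaining implications are assembly of the facts quoted above.
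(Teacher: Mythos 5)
Your decomposition is exactly the paper's --- the cycle (\ref{reflect-equiv-1})$\Rightarrow$(\ref{reflect-equiv-2})$\Rightarrow$(\ref{reflect-equiv-3})$\Rightarrow$(\ref{reflect-equiv-1}) together with (\ref{reflect-equiv-1})$\Leftrightarrow$(\ref{reflect-equiv-4}) --- with the same attributions and the same deferrals to \cite{shelahaecbook}, \cite{jrsh875} and \cite[7.15]{jarden-tameness-apal}, so as a compilation this matches the paper's proof. One caveat on (\ref{reflect-equiv-3})$\Rightarrow$(\ref{reflect-equiv-1}): your inline justification --- decency makes $\Ksatp{\lambda^+}_{\lambda^+}$ an AEC in $\lambda^+$ and successfulness makes $\K_{\s^+}$ one, ``so'' $\s^+$ lives on $\Ksatp{\lambda^+}_{\lambda^+}$ with its natural ordering --- is not by itself sufficient, since two AEC orderings on the same class of models need not coincide; the actual step (as in the paper) is that decency identifies the auxiliary ordering $\preceq^{\otimes}_{\lambda^+}$ of \cite[8.1.2]{jrsh875} with $\lea$, and then the equivalence \cite[9.1.13]{jrsh875}, which ties failure of smoothness of $\K_{\s^+}$ to the existence of $M \preceq^{\otimes}_{\lambda^+} N$ with $M \not\leap{\s^+} N$, converts successfulness into $\lea\,\subseteq\,\leap{\s^+}$ on $\K_{\s^+}$.
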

\begin{proof} \
  \begin{itemize}
  \item \underline{(\ref{reflect-equiv-1}) implies (\ref{reflect-equiv-2})}: By \cite[3.11]{downward-categ-tame-apal}, $\s$ is weakly successful. Since it reflects down, it is successful and $\leap{\s^+}$ is the restriction of $\lea$ to $\Ksatp{\lambda^+}_{\lambda^+}$. By adapting the proof of \cite[III.1.5(3)]{shelahaecbook} (see \cite[2.14]{counterexample-frame-afml}), we get that $\s$ is $\goodp$.
  \item \underline{(\ref{reflect-equiv-2}) implies (\ref{reflect-equiv-3})}: By Lemma \ref{goodp-decent}.
  \item \underline{(\ref{reflect-equiv-3}) implies (\ref{reflect-equiv-1})}: By definition of successful, $\s$ is weakly successful, so by Fact \ref{weakly-succ-extension} extends to a unique type-full good $(\le \lambda, \lambda)$-frame. Since $\s$ is decent, we have that the ordering $\preceq_{\lambda^+}^{\otimes}$ from \cite[8.1.2]{jrsh875} is the same as $\lea$. By assumption $\s$ is successful, so all of the equivalent conditions of \cite[9.1.13]{jrsh875} are false, so in particular for $M, N \in \K_{\s^+}$, $M \lea N$ implies $M \leap{\s^+} N$. Therefore $\s$ reflects down.
  \item \underline{(\ref{reflect-equiv-1}) implies (\ref{reflect-equiv-4})}: we have already argued that $\s$ is successful, and by definition $\leap{\s^+}$ is just the restriction of $\lea$, i.e.\ $\K_{\s^+} = \Ksatp{\lambda^+}_{\lambda^+}$. Now weak tameness follows from \cite[III.1.10]{shelahaecbook} (a similar argument already appears in \cite{superior-aec}) and amalgamation is because by Fact \ref{good-ext-thm} $\s^+$ is a good $\lambda^+$-frame.
  \item\underline{(\ref{reflect-equiv-4}) implies (\ref{reflect-equiv-1})}: By \cite[7.15]{jarden-tameness-apal}.
  \end{itemize}
\end{proof}

The aim of this section is to add another condition to Fact \ref{reflect-equiv-fact}: the existence of a good $\lambda^+$-frame on $\Ksatp{\lambda^+}_{\lambda^+}$. Toward this, we will use the following ordering on pairs of saturated models, introduced by Jarden \cite[7.5]{jarden-tameness-apal}. The idea is as follows: call a pair $(M, N)$ of saturated models in $\K_{\lambda^+}$ \emph{bad} if $M \lea N$ but $M \not \leap{\s^+} N$. Jarden's ordering of these pairs works in such a way that higher pairs interact non-trivially with lower pairs. Jarden showed that there are maximal such pairs, and we will show that if there is a good $\lambda^+$-frame then on the other hand it is always possible to extend these pairs. This will be a contradiction showing that there are no bad pairs after all.

\begin{defin}
  Let $\s$ be a weakly successful categorical good $\lambda$-frame generating an AEC $\K$. For pairs $(M, N), (M', N')$ in $\K_{\s^+}$ with $M \lea N$, $M' \lea N'$, we write $(M, N) \tlt (M', N')$ if:

  \begin{enumerate}
  \item $M \leap{\s^+} M'$.
  \item $N \lea N'$.
  \item $M' \cap N \neq M$.
  \end{enumerate}

  We write $(M, N) \tleq (M', N')$ if $(M, N) \tlt (M', N')$ or $(M, N) = (M', N')$.
\end{defin}

\begin{fact}\label{smoothness-contradict-fact}
  Let $\s$ be a \emph{decent} weakly successful categorical good $\lambda$-frame generating an AEC $\K$.

  \begin{enumerate}
  \item\label{smoothness-contradict-1} For any $M \lea N$ both in $\K_{\s^+}$, there exists a pair $(M', N')$ such that $(M, N) \tleq (M', N')$ and $(M', N')$ is $\tlt$-maximal.
  \item\label{smoothness-contradict-2} \cite[9.1.13]{jrsh875} $M \lea N$ are both in $\K_{\s^+}$, and $M \not \leap{\s^+} N$, then there exists $\seq{M_i : i \le \lambda^+}$ increasing continuous in $\K_{\s^+}$ such that $M_{\lambda^+} = M$ and $M_i \leap{\s^+} N$ for all $i < \lambda^+$.
  \end{enumerate}
\end{fact}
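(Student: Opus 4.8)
The plan is to read part (\ref{smoothness-contradict-2}) off \cite[9.1.13]{jrsh875} verbatim, so no work is needed there; the discussion below concerns part (\ref{smoothness-contradict-1}), whose content is essentially Jarden's analysis of the ordering $\tlt$ in \cite{jarden-tameness-apal}.

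Fix $M \lea N$ in $\K_{\s^+}$. I would build, by transfinite recursion, a $\tlt$-increasing continuous chain $\seq{(M_i, N_i) : i \le \gamma}$ with $(M_0, N_0) = (M, N)$, halting as soon as the current pair admits no strict $\tlt$-extension. At a limit ordinal $i$ I take componentwise unions $M_i := \bigcup_{j < i} M_j$, $N_i := \bigcup_{j < i} N_j$: for $i < \lambda^{++}$ this is legitimate because $\K_{\s^+}$ is closed under unions of chains of length $< \lambda^{++}$ (Fact \ref{weakly-succ-facts}), so $(M_i, N_i) \in \K_{\s^+}$ with $M_j \leap{\s^+} M_i$ for $j < i$, and $M_i \lea N_i$ follows from smoothness in the AEC $\K$ (each $M_j \lea N_j \lea N_i$). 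One then checks $(M_j, N_j) \tleq (M_i, N_i)$ for all $j < i$; conditions (1) and (2) of $\tlt$ are immediate, and condition (3) passes to the union because a witness $a_j \in (M_{j+1} \cap N_j) \setminus M_j$ chosen at an earlier stage still shows $M_i \cap N_j \neq M_j$. If the recursion ever halts, the final pair is $\tlt$-maximal and $\tgeq (M, N)$, which is what we want.

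Thus everything reduces to showing that the recursion halts before stage $\lambda^{++}$, i.e.\ that there is no $\tlt$-increasing continuous chain of length $\lambda^{++}$. The difficulty is that this cannot be arranged by a soft maximality argument: each successor step forces $M_i \subsetneq M_{i+1}$ as \emph{sets} (from $M_i \subseteq M_{i+1} \cap N_i$ and condition (3) pick $a_i \in (M_{i+1} \cap N_i) \setminus M_i$; the $a_i$ are pairwise distinct since $a_i \in M_{i+1} \subseteq M_j$ but $a_j \notin M_j$ for $i < j$), so a chain of length $\lambda^{++}$ would make $\bigcup_{i < \lambda^{++}} M_i$ have cardinality $\lambda^{++}$ even though every $M_i, N_i$ has cardinality $\lambda^+$; in particular such a chain has no $\tleq$-upper bound in $\K_{\s^+}$, so Zorn's lemma cannot be invoked directly and the chain must instead be excluded outright. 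This exclusion is exactly Jarden's theorem that ``bad'' pairs have $\tlt$-maximal extensions, and it is where the remaining hypotheses on $\s$ (weak successfulness, categoricity, and, in the form relevant here, decency --- which by Theorem \ref{decent-thm} makes unions of increasing chains of $\lambda^+$-saturated models well behaved) are used. I expect this length bound to be the main obstacle; the rest is routine bookkeeping with the definition of $\tlt$ and the closure properties of $\K_{\s^+}$ recorded in Fact \ref{weakly-succ-facts}.
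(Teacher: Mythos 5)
Your skeleton for part (\ref{smoothness-contradict-1}) — transfinite recursion on $\tlt$-increasing continuous pairs, componentwise unions at limits, witnesses $a_i \in (M_{i+1} \cap N_i) \setminus M_i$ that are pairwise distinct and survive to limit stages — is the right shape and matches the structure of the argument the paper invokes (it cites the proof of Jarden's result rather than reproving it, adding only the observation that decency is what keeps the construction inside $\Ksatp{\lambda^+}_{\lambda^+}$). But your proposal has a genuine gap exactly where you flag the ``main obstacle'': you reduce everything to the nonexistence of a $\tlt$-increasing continuous chain of length $\lambda^{++}$ and then defer that to ``Jarden's theorem that bad pairs have $\tlt$-maximal extensions'' — which is the very statement being proved, so the deferral is circular. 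The missing step is not deep, but it is the entire content: since the chain is continuous, at each limit $i$ one has $N_i = \bigcup_{j<i} N_j$, so $a_i \in N_{g(i)}$ for some $g(i) < i$; $g$ is regressive on the club of limit ordinals below $\lambda^{++}$, so by Fodor it is constantly $j^\ast$ on a stationary set $S$ of size $\lambda^{++}$, and then the $\lambda^{++}$ many pairwise distinct witnesses $\{a_i : i \in S\}$ all lie in the single model $N_{j^\ast}$ of cardinality $\lambda^+$ — contradiction. Without some such pressing-down argument your recursion has no reason to halt, and as you yourself observe the naive cardinality count does not exclude long chains.

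Two smaller points. First, at limit stages you justify $(M_i, N_i) \in \K_{\s^+}$ by Fact \ref{weakly-succ-facts}, but that fact only covers $\leap{\s^+}$-increasing chains; the $N_j$ are merely $\lea$-increasing (condition (2) of $\tlt$ is $N \lea N'$, not $N \leap{\s^+} N'$), so saturation of $\bigcup_{j<i} N_j$ is precisely where decency enters via Theorem \ref{decent-thm} — you gesture at this at the end, but the attribution in the limit step itself is to the wrong fact, and this is the one place the paper explicitly says decency is needed. Second, part (\ref{smoothness-contradict-2}) is not quite ``verbatim'' from the citation: one must check that the hypothesis of the cited equivalence applies, and the paper does this by using decency to identify the auxiliary ordering $\preceq^{\otimes}_{\lambda^+}$ of \cite{jrsh875} with $\lea$ on $\Ksatp{\lambda^+}_{\lambda^+}$ before condition (2) of that equivalence can be invoked.
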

\begin{proof} \
  \begin{enumerate}
  \item By the proof of \cite[7.8]{jarden-tameness-apal}. We use decency here to make sure (via Theorem \ref{decent-thm}) that $\Ksatp{\lambda^+}_{\lambda^+}$ is an AEC in $\lambda^+$, hence closed under unions of short chains (Jarden works with the whole of $\K_{\lambda^+}$).
  \item The definition of decent says that the ordering $\preceq_{\lambda^+}^{\otimes}$ from \cite[8.1.2]{jrsh875} is the same as the usual ordering $\lea$ on $\Ksatp{\lambda^+}_{\lambda^+}$. Thus if $M \lea N$ are both in $\K_{\s^+}$ but $M \not \leap{\s^+} N$, then condition (2) in the equivalence proven in \cite[9.1.13]{jrsh875} holds, hence condition (3) there also holds, which is what we wanted to prove.
  \end{enumerate}
\end{proof}

We have arrived to the main theorem of this section:

\begin{thm}\label{main-thm}
  Let $\s$ be a weakly successful categorical good $\lambda$-frame generating an AEC $\K$. If there is a good $\lambda^+$-frame on $\Ksatp{\lambda^+}_{\lambda^+}$, then $\s$ reflects down.
\end{thm}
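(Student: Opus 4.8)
The plan is to argue by contradiction, reducing to a $\tlt$-maximal ``bad pair'' and then contradicting its maximality by transporting an already-available $\leap{\s^+}$-extension along a conjugating isomorphism supplied by the good $\lambda^+$-frame.

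\textbf{Setup and reduction.} Let $\ts$ denote the given good $\lambda^+$-frame on $\Ksatp{\lambda^+}_{\lambda^+}$. Its existence forces $\Ksatp{\lambda^+}_{\lambda^+}$ to be an AEC in $\lambda^+$, so unions of increasing chains of $\lambda^+$-saturated models are $\lambda^+$-saturated; by Fact \ref{sl-easy} and Theorem \ref{decent-thm}, $\s$ is decent, so Facts \ref{weakly-succ-facts} and \ref{smoothness-contradict-fact} apply. Moreover $\Ksatp{\lambda^+}_{\lambda^+}$ is categorical (there is a unique saturated model of cardinality $\lambda^+$, using that $\ts$, being good, has amalgamation and joint embedding), so $\ts$ is a categorical good $\lambda^+$-frame and Fact \ref{conj-fact} applies to it. Call a pair $(M, N)$ with $M, N \in \K_{\s^+}$ and $M \lea N$ \emph{bad} if $M \not\leap{\s^+} N$; by Definition \ref{reflect-def}, $\s$ reflects down if and only if there is no bad pair. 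Assume for a contradiction that there is one. If $(M, N)$ is bad and $(M, N) \tleq (M', N')$, then $(M', N')$ is again bad: otherwise $M \leap{\s^+} M' \leap{\s^+} N'$ gives $M \leap{\s^+} N'$, and since $M \lea N \lea N'$, the strengthened coherence of $\leap{\s^+}$ (Fact \ref{weakly-succ-facts}(3), with $M_0 = M$, $M_1 = N$, $M_2 = N'$) yields $M \leap{\s^+} N$, a contradiction. Hence by Fact \ref{smoothness-contradict-fact}(1) we may fix a bad pair $(M, N)$ that is $\tlt$-maximal.

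\textbf{The construction.} Since $M \not\leap{\s^+} N$, Fact \ref{smoothness-contradict-fact}(2) gives a $\leap{\s^+}$-increasing continuous chain $\seq{M_i : i \le \lambda^+}$ with $M_{\lambda^+} = M$ and $M_i \leap{\s^+} N$ for all $i < \lambda^+$; this chain is in particular $\leap{\ts}$-increasing continuous. As $M \ltap{\ts} N$, density of basic types of $\ts$ gives $b \in N \setminus M$ with $q := \gtpp{\ts} (b / M; N)$ basic, and by local character of $\ts$ there is $i^\ast < \lambda^+$ such that $q$ does not $\ts$-fork over $M_{i^\ast}$. Apply Fact \ref{conj-fact} to $\ts$, to $M_{i^\ast} \leap{\ts} M$, and to $q$: there is an isomorphism $g : M \cong M_{i^\ast}$ with $g(q) = q \rest M_{i^\ast}$. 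Now transport $N$ along $g^{-1}$: pick an isomorphism $\hat g : N \cong M'$ extending $g^{-1} : M_{i^\ast} \cong M$ (so $\hat g[M_{i^\ast}] = M$ and $M \lea M'$). Then $M'$ is isomorphic to $N$, hence saturated of cardinality $\lambda^+$, so $M' \in \K_{\s^+}$; since $M_{i^\ast} \leap{\s^+} N$ and $\leap{\s^+}$ is invariant under isomorphism, $M \leap{\s^+} M'$; and writing $c := \hat g(b)$, a routine computation with the conjugating isomorphism gives $\gtpp{\ts} (c / M; M') = g^{-1} (q \rest M_{i^\ast}) = q = \gtpp{\ts} (b / M; N)$.

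\textbf{Contradicting maximality.} Since $\gtpp{\ts}(c/M;M') = \gtpp{\ts}(b/M;N)$ and $\K_{\ts} = \Ksatp{\lambda^+}_{\lambda^+}$ has amalgamation, there are $N' \in \K_{\ts}$ and $\ts$-embeddings $j_1 : M' \to N'$, $j_2 : N \to N'$ fixing $M$ with $j_1 (c) = j_2 (b)$; after relabeling $N'$ we may assume $j_2 = \id_N$ (so that $j_1$ still fixes $M$), whence $N \lea N'$ and $b = j_1 (c) \in j_1[M'] =: M''$. Then $M'' \cong M'$ is saturated of cardinality $\lambda^+$, $M'' \lea N'$, and $M \leap{\s^+} M''$ by invariance of $\leap{\s^+}$ along $j_1$. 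Since $b \in M'' \cap N$ but $b \notin M$, we have $M'' \cap N \neq M$. Thus $(M, N) \tlt (M'', N')$, contradicting the $\tlt$-maximality of $(M, N)$. Therefore no bad pair exists, i.e.\ $\s$ reflects down.

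\textbf{Main obstacle.} The delicate step is producing a $\leap{\s^+}$-extension $M'$ of $M$ (rather than a mere $\lea$-extension) that realizes $q$: a generic realization of $q$ over $M$ need not sit inside any $\leap{\s^+}$-extension, and assuming otherwise would already presuppose the conclusion. The way around this is the conjugation property of the categorical good $\lambda^+$-frame $\ts$: the nonforking extension $q$ is conjugate to its restriction $q \rest M_{i^\ast}$, which is realized inside the $\leap{\s^+}$-extension $N$ of $M_{i^\ast}$; transporting $N$ along the conjugating isomorphism converts it into a $\leap{\s^+}$-extension of $M$ that automatically realizes $q$. This is precisely where the argument uses more than amalgamation and existence in $\ts$, and where it pushes beyond Jarden's use of weak tameness.
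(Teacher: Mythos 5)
Your proof is correct and takes essentially the same route as the paper's: reduce to a $\tlt$-maximal bad pair via Fact \ref{smoothness-contradict-fact}, then combine the chain from part (2) of that fact with density of basic types, local character, and conjugation (Fact \ref{conj-fact}) in the good $\lambda^+$-frame to realize a basic type over $M$ inside a $\leap{\s^+}$-extension, and finally contradict maximality by an amalgamation argument (which the paper isolates as a separate Claim and you inline). The differences are purely presentational.
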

\begin{proof}
  Let $\ts$ be the good $\lambda^+$-frame on $\Ksatp{\lambda^+}_{\lambda^+}$. Note that since $\ts$ is good, $\Ksatp{\lambda^+}_{\lambda^+}$ must be an AEC in $\lambda^+$, i.e.\ unions of chains of $\lambda^+$-saturated models are $\lambda^+$-saturated, so by Fact \ref{sl-easy} $\K$ has a superlimit in $\lambda^+$ which by Theorem \ref{decent-thm} implies that $\s$ is decent. Therefore we will later be able to apply Fact \ref{smoothness-contradict-fact}. We first show:

  \underline{Claim}: If $M \lea N$ are both in $\K_{\s^+}$, $a \in |N| \backslash |M|$ and $\gtpp{\K} (a / M ; N)$ is realized in a $\leap{\s^+}$-extension of $M$, then $(M, N)$ is not $\tlt$-maximal.

  \underline{Proof of Claim}: Let $p := \gtpp{\K} (a / M; N)$. Let $N'$ be such that $M \leap{\s^+} N'$ and $p$ is realized by $b \in N'$ (i.e.\ $p = \gtpp{\K} (b / M; N')$). By amalgamation in $\Ksatp{\lambda^+}_{\lambda^+}$ (which holds since we are assuming there is a good frame on this class) and the fact that $\gtpp{\K} (a / M; N) = \gtpp{\K} (b / M; N')$, there exists $N'' \in \Ksatp{\lambda^+}_{\lambda^+}$ and $f: N' \xrightarrow[M]{} N''$ with $N \lea N''$ such that $f (b) = a$. Consider the pair $(f[N'], N'')$. Since $\leap{\s^+}$ is invariant under isomorphisms and $M \leap{\s^+} N'$, $M \leap{\s^+} f[N']$. Moreover, $a \in |N| \backslash |M|$, so $b \in |N'| \backslash |M|$, and so $a = f (b) \in |f[N']| \backslash |M|$. Since we also have that $a \in |N|$, this implies that $f[N'] \cap N \neq  M$, so $(M, N) \tlt (f[N'], N'')$, hence $(M, N)$ is not $\tlt$-maximal. $\dagger_{\text{Claim}}$
  
  Let $M, N \in \K_{\s^+}$ be such that $M \lea N$. We have to show that $M \leap{\s^+} N$. Suppose not. By Fact \ref{smoothness-contradict-fact}(\ref{smoothness-contradict-1}), there exists $(M', N')$ such that $(M, N) \tleq (M', N')$ and $(M', N')$ is $\tlt$-maximal. Observe that $M' \not \leap{\s^+} N'$: if $M' \leap{\s^+} N'$, then since also $M \leap{\s^+} M'$ and $\K_{\s^+}$ is an abstract class (Fact \ref{weakly-succ-facts}(\ref{weakly-succ-ac})), we would have that $M \leap{\s^+} N'$ so by Fact \ref{weakly-succ-facts}(\ref{weakly-succ-monot}), $M \leap{\s^+} N$, a contradiction. By Fact \ref{smoothness-contradict-fact}(\ref{smoothness-contradict-2}), there exists $\seq{M_i : i \le \lambda^+}$ increasing continuous in $\K_{\s^+}$ such that $M_{\lambda^+} = M'$ and $M_i \leap{\s^+} N'$ for all $i < \lambda^+$. Since $M' \not \leap{\s^+} N'$, we have in particular that $M' \neq N'$. By density of basic types, let $a \in N' \backslash M'$ and let $p := \gtpp{\K} (a / M'; N')$ be basic. By local character in $\ts$ (the good $\lambda^+$-frame on $\Ksatp{\lambda^+}_{\lambda^+}$), there is $i < \lambda^+$ such that $p$ does not $\ts$-fork over $M_i$. By conjugation in $\ts$ (Fact \ref{conj-fact}), this means that $p \rest M_i$ and $p$ are isomorphic. However since $M_i \leap{\s^+} N'$, $p \rest M_i$ is realized inside some $\leap{\s^+}$-extension of $M_i$, hence $p$ is also realized inside some $\leap{\s^+}$-extension of $M'$. Together with the claim, this contradicts the $\tlt$-maximality of $(M', N')$.
\end{proof}
\begin{remark}
  We are not using all the properties of the good $\lambda^+$-frame. In particular, it suffices that local character holds for chains of length $\lambda^+$.
\end{remark}

We obtain a new characterization of when a weakly successful frame reflects down. We emphasize that only (\ref{main-cor-3}) implies (\ref{main-cor-1}) below is new. (\ref{main-cor-1}) implies (\ref{main-cor-2}) and (\ref{main-cor-1}) implies (\ref{main-cor-3}) are due to Shelah while (\ref{main-cor-2}) implies (\ref{main-cor-1}) is due to Jarden.

\begin{cor}\label{main-cor}
  Let $\s$ be a weakly successful categorical good $\lambda$-frame generating the AEC $\K$. The following are equivalent:

  \begin{enumerate}
  \item\label{main-cor-1} $\s$ is successful and decent.
  \item\label{main-cor-2} $\K$ is $(\lambda, \lambda^+)$-weakly tame and $\Ksatp{\lambda^+}_{\lambda^+}$ has amalgamation.
  \item\label{main-cor-3} There is a  good $\lambda^+$-frame on $\Ksatp{\lambda^+}_{\lambda^+}$.
  \end{enumerate}

  Moreover, if it exists then the good $\lambda^+$-frame $\ts$ on $\Ksatp{\lambda^+}_{\lambda^+}$ is unique, can be enlarged to be type-full, and coincides (on the appropriate class of basic types) with $\s^+$ and $\s_{\lambda^+} \rest \Ksatp{\lambda^+}_{\lambda^+}$ (see Definition \ref{s-lambdap-def}).
\end{cor}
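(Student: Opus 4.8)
The plan is to obtain the equivalence by chaining the facts already assembled in this section with Theorem \ref{main-thm}, so that beyond Theorem \ref{main-thm} nothing new is needed; the ``moreover'' clause is then a canonicity argument. Since $\s$ is assumed weakly successful, condition (\ref{main-cor-2}) is exactly Fact \ref{reflect-equiv-fact}(\ref{reflect-equiv-4}) and condition (\ref{main-cor-1}) is exactly Fact \ref{reflect-equiv-fact}(\ref{reflect-equiv-3}), so (\ref{main-cor-1}) $\Leftrightarrow$ (\ref{main-cor-2}) is immediate. For (\ref{main-cor-1}) $\Rightarrow$ (\ref{main-cor-3}): by Fact \ref{reflect-equiv-fact}, (\ref{main-cor-1}) implies that $\s$ extends to a type-full good $(\le\lambda,\lambda)$-frame which reflects down, so $\leap{\s^+}$ agrees with $\lea$ on $K_{\s^+} = \Ksatp{\lambda^+}_{\lambda^+}$; since $\s$ is successful, Fact \ref{good-ext-thm} makes $\s^+$ a $\goodp$ (hence good) $\lambda^+$-frame, and it is a frame on $\Ksatp{\lambda^+}_{\lambda^+}$, which is (\ref{main-cor-3}). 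Finally (\ref{main-cor-3}) $\Rightarrow$ (\ref{main-cor-1}), the only genuinely new implication, is Theorem \ref{main-thm} (giving that $\s$ reflects down, i.e.\ Fact \ref{reflect-equiv-fact}(\ref{reflect-equiv-1}) holds) followed by Fact \ref{reflect-equiv-fact}(\ref{reflect-equiv-1}) $\Rightarrow$ (\ref{reflect-equiv-3}). This closes the cycle.

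For the ``moreover'' I would first observe that $\Ksatp{\lambda^+}_{\lambda^+}$ is categorical: a good $\lambda$-frame always yields a (unique, universal) saturated model in $\K_{\lambda^+}$, so there is a single model in $\Ksatp{\lambda^+}_{\lambda^+}$ up to isomorphism. Hence any good $\lambda^+$-frame $\ts$ on $\Ksatp{\lambda^+}_{\lambda^+}$ is a categorical good $\lambda^+$-frame, and, enlarging $\ts$ to a type-full good $\lambda^+$-frame so that its class of basic types is unambiguous, canonicity of categorical good frames (Fact \ref{canon-fact}, applied at $\lambda^+$) pins it down uniquely. It then remains to exhibit this unique frame both as $\s^+$ and as $\s_{\lambda^+} \rest \Ksatp{\lambda^+}_{\lambda^+}$: the former is a good $\lambda^+$-frame on $\Ksatp{\lambda^+}_{\lambda^+}$ by the implication (\ref{main-cor-1}) $\Rightarrow$ (\ref{main-cor-3}) just proved and is type-full because it is built from the type-full good $(\le\lambda,\lambda)$-frame; the latter is a good $\lambda^+$-frame by Theorem \ref{frame-ext-weak}, whose hypotheses all hold now ($\s$ decent, $\Ksatp{\lambda^+}_{\lambda^+}$ has amalgamation, and $\K$ is $(\lambda,\lambda^+)$-weakly tame). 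By uniqueness they coincide with one another and with $\ts$ on the common basic types.

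I expect the only delicate point to be this ``moreover'': one must track which types are declared basic in each of $\ts$, $\s^+$, and $\s_{\lambda^+}\rest\Ksatp{\lambda^+}_{\lambda^+}$, check that after passing to type-full enlargements the ``same basic types'' hypothesis of Fact \ref{canon-fact} is genuinely met, and unwind Definitions \ref{succ-def} and \ref{s-lambdap-def} to verify directly that the nonforking relations of $\s^+$ and of $\s_{\lambda^+}\rest\Ksatp{\lambda^+}_{\lambda^+}$ --- both defined by the same down-reflection formula over $\lambda$-sized submodels --- are literally the same $4$-ary relation. The equivalence of (\ref{main-cor-1}), (\ref{main-cor-2}), (\ref{main-cor-3}) is by contrast a short bookkeeping exercise resting entirely on Theorem \ref{main-thm}.
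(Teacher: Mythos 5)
Your argument is correct and follows the paper's own proof essentially step for step: the three-way equivalence is obtained by matching conditions (\ref{main-cor-1}) and (\ref{main-cor-2}) with items (\ref{reflect-equiv-3}) and (\ref{reflect-equiv-4}) of Fact \ref{reflect-equiv-fact}, getting (\ref{main-cor-1}) $\Rightarrow$ (\ref{main-cor-3}) from Fact \ref{good-ext-thm} via reflecting down, and getting (\ref{main-cor-3}) $\Rightarrow$ (\ref{main-cor-1}) from Theorem \ref{main-thm} plus Fact \ref{reflect-equiv-fact}; the ``moreover'' is likewise handled as in the paper, by exhibiting $\s^+$ and the type-full $\s_{\lambda^+} \rest \Ksatp{\lambda^+}_{\lambda^+}$ (type-full via the moreover clause of Theorem \ref{frame-ext-weak}, after taking $\s$ type-full) as good $\lambda^+$-frames on the categorical class $\Ksatp{\lambda^+}_{\lambda^+}$ and invoking Fact \ref{canon-fact}. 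Your explicit remark that $\Ksatp{\lambda^+}_{\lambda^+}$ is categorical, and your flagging of the bookkeeping of basic types when applying canonicity, only make explicit what the paper leaves implicit.
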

\begin{proof} \
  \begin{itemize}
    \item \underline{(\ref{main-cor-1}) is equivalent to (\ref{main-cor-2})}: By Fact \ref{reflect-equiv-fact}.
    \item \underline{(\ref{main-cor-1}) implies (\ref{main-cor-3})}: By Facts \ref{good-ext-thm} and \ref{reflect-equiv-fact}.
    \item \underline{(\ref{main-cor-3}) implies (\ref{main-cor-1})}: By Theorem \ref{main-thm} and Fact \ref{reflect-equiv-fact}.
  \end{itemize}

  For the moreover part, assume that $\ts$ is a good $\lambda^+$-frame on $\Ksatp{\lambda^+}_{\lambda^+}$. By the proof of (\ref{main-cor-1}) implies (\ref{main-cor-3}), $\s^+$ is in fact a good $\lambda^+$-frame on $\Ksatp{\lambda^+}_{\lambda^+}$. By weak tameness and Theorem \ref{frame-ext-weak}, $\s_{\lambda^+} \rest \Ksatp{\lambda^+}_{\lambda^+}$ is also a good $\lambda^+$-frame on $\Ksatp{\lambda^+}_{\lambda^+}$. Without loss of generality (Fact \ref{reflect-equiv-fact}(\ref{reflect-equiv-1})), $\s$ is type-full so by the moreover part of Theorem \ref{frame-ext-weak}, $\s_{\lambda^+} \rest \Ksatp{\lambda^+}_{\lambda^+}$ is also type-full, and by canonicity (Fact \ref{canon-fact}) is the desired extension of $\ts$.
\end{proof}

We can combine our result with the weak GCH to obtain weak tameness from two successive good frames. This gives a converse to Theorem \ref{frame-ext-weak}.

\begin{cor}\label{main-cor-gch}
  Let $\K$ be an AEC, let $\lambda \ge \LS (\K)$ and assume that $2^{\lambda} < 2^{\lambda^+}$. Let $\s$ be a categorical good $\lambda$-frame on $\K_{\lambda}$. The following are equivalent:

  \begin{enumerate}
  \item\label{gch-1} There is a $\goodp$ $\lambda^+$-frame on $\Ksatp{\lambda^+}_{\lambda^+}$.
  \item\label{gch-2} There is a good $\lambda^+$-frame on $\Ksatp{\lambda^+}_{\lambda^+}$.
  \item\label{gch-3} $\K$ is $(\lambda, \lambda^+)$-weakly tame, $\s$ is decent, and $\Ksatp{\lambda^+}_{\lambda^+}$ has amalgamation.
  \item\label{gch-4} $\K$ is $(\lambda, \lambda^+)$-weakly tame and for every saturated $M \in \K_{\lambda^+}$ there is $N \in \K_{\lambda^+}$ universal over $M$.
  \item\label{gch-5} $\s$ is successful and $\goodp$.
  \end{enumerate}
\end{cor}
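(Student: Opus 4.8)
The strategy is to reduce to Corollary~\ref{main-cor}, which already establishes the analogous chain of equivalences under the standing assumption that $\s$ is \emph{weakly successful}; so the role of the hypothesis $2^{\lambda} < 2^{\lambda^+}$ is precisely to force weak successfulness, via Fact~\ref{weakly-succ-gch}. I would prove the cycle (\ref{gch-1}) $\Rightarrow$ (\ref{gch-2}) $\Rightarrow$ (\ref{gch-3}) $\Rightarrow$ (\ref{gch-4}) $\Rightarrow$ (\ref{gch-5}) $\Rightarrow$ (\ref{gch-1}).

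The implication (\ref{gch-1}) $\Rightarrow$ (\ref{gch-2}) holds since a $\goodp$ $\lambda^+$-frame is in particular good. For (\ref{gch-2}) $\Rightarrow$ (\ref{gch-3}): a good $\lambda^+$-frame on $\Ksatp{\lambda^+}_{\lambda^+}$ makes that class an AEC in $\lambda^+$ with amalgamation which is stable in $\lambda^+$; combined with the universality of the saturated model of $\K_{\lambda^+}$, a short amalgamation argument then produces, over every saturated $M \in \K_{\lambda^+}$, a universal extension \emph{inside $\K_{\lambda^+}$}, so Fact~\ref{weakly-succ-gch} applies and $\s$ is weakly successful; Theorem~\ref{main-thm} next gives that $\s$ reflects down, and Fact~\ref{reflect-equiv-fact} (the implication (\ref{reflect-equiv-1}) $\Rightarrow$ (\ref{reflect-equiv-4}), with Fact~\ref{sl-easy} and Theorem~\ref{decent-thm} supplying decency) yields $(\lambda,\lambda^+)$-weak tameness, decency, and amalgamation in $\Ksatp{\lambda^+}_{\lambda^+}$. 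For (\ref{gch-3}) $\Rightarrow$ (\ref{gch-4}): Theorem~\ref{frame-ext-weak} converts weak tameness, decency and amalgamation into a good $\lambda^+$-frame on $\Ksatp{\lambda^+}_{\lambda^+}$, and the universal-extension clause of (\ref{gch-4}) follows exactly as in the previous step. For (\ref{gch-4}) $\Rightarrow$ (\ref{gch-5}): Fact~\ref{weakly-succ-gch} gives weak successfulness directly from the hypotheses; amalgamation in $\Ksatp{\lambda^+}_{\lambda^+}$ follows by amalgamating two saturated extensions of a saturated $M_0$ inside a universal extension of $M_0$ and then mapping the result into the universal saturated model of $\K_{\lambda^+}$; hence Fact~\ref{reflect-equiv-fact} (the implication (\ref{reflect-equiv-4}) $\Rightarrow$ (\ref{reflect-equiv-2})) applies and $\s$ is successful and $\goodp$. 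Finally, for (\ref{gch-5}) $\Rightarrow$ (\ref{gch-1}): since $\s$ is successful, $\s^+$ is a $\goodp$ $\lambda^+$-frame by Fact~\ref{good-ext-thm}; and since $\s$ is $\goodp$ it is decent by Lemma~\ref{goodp-decent}, so by Fact~\ref{reflect-equiv-fact} it reflects down, whence $\K_{\s^+} = \Ksatp{\lambda^+}_{\lambda^+}$ and $\s^+$ is a $\goodp$ $\lambda^+$-frame on $\Ksatp{\lambda^+}_{\lambda^+}$.

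There is no single hard conceptual step here: the corollary is essentially an assembly of the earlier results, with $2^{\lambda} < 2^{\lambda^+}$ used exactly once, through Fact~\ref{weakly-succ-gch}. The part I expect to require the most care is ensuring the hypothesis of Fact~\ref{weakly-succ-gch} is genuinely available for each of the five conditions --- i.e.\ producing a universal extension \emph{inside all of} $\K_{\lambda^+}$ over each saturated model, which for (\ref{gch-1})--(\ref{gch-3}) and (\ref{gch-5}) must be extracted from the abstract existence of a good $\lambda^+$-frame on the saturated class, using the universality and homogeneity of the saturated model in $\lambda^+$ --- together with keeping straight which numbered conditions of Fact~\ref{reflect-equiv-fact} and Corollary~\ref{main-cor} are invoked at each turn.
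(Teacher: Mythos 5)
Your proposal is correct and assembles the same ingredients as the paper's proof (Fact \ref{weakly-succ-gch} as the sole use of $2^{\lambda}<2^{\lambda^+}$, Theorem \ref{main-thm}, Fact \ref{reflect-equiv-fact}, Theorem \ref{frame-ext-weak}, and Facts \ref{good-ext-thm}/\ref{goodp-decent}), merely rearranged into a single cycle and with Corollary \ref{main-cor} unrolled into its constituent steps. If anything, you are slightly more careful than the paper in spelling out why a good $\lambda^+$-frame on $\Ksatp{\lambda^+}_{\lambda^+}$ yields a universal extension in all of $\K_{\lambda^+}$ (not just among saturated models) and why the saturated class has amalgamation in step (\ref{gch-4}) $\Rightarrow$ (\ref{gch-5}), both of which the paper leaves implicit.
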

\begin{proof} \
  \begin{itemize}
  \item \underline{(\ref{gch-1}) implies (\ref{gch-2})}: Trivial.
  \item \underline{(\ref{gch-2}) implies (\ref{gch-3})}: By Fact \ref{weakly-succ-gch}, $\s$ is weakly successful. Now apply Corollary \ref{main-cor}.
  \item \underline{(\ref{gch-3}) implies (\ref{gch-2})}: By Theorem \ref{frame-ext-weak}.
  \item \underline{(\ref{gch-2}) implies (\ref{gch-4})}: Assume (\ref{gch-2}). Then by definition of a good $\lambda^+$-frame, for every saturated $M \in \K_{\lambda^+}$ there is $N \in \K_{\lambda^+}$ universal over $M$. Further we proved already that (\ref{gch-3}) holds. Therefore $\K$ is $(\lambda, \lambda^+)$-weakly tame.
  \item \underline{(\ref{gch-4}) implies (\ref{gch-5})}: By Fact \ref{weakly-succ-gch}, $\s$ is weakly successful. Now apply (\ref{reflect-equiv-4}) implies (\ref{reflect-equiv-2}) in Fact \ref{reflect-equiv-fact}.
  \item \underline{(\ref{gch-5}) implies (\ref{gch-1})}: By Fact \ref{good-ext-thm} and since $\s$ reflects down (Fact \ref{reflect-equiv-fact}).
  \end{itemize}
\end{proof}

\section{From weak to strong tameness}

Assuming amalgamation in $\lambda^+$, we are able to conclude $(\lambda, \lambda^+)$-tameness in (\ref{main-cor-2}) of Corollary \ref{main-cor}. To prove this, we recall that the definition of $\s^+$ (Definition \ref{succ-def}) can be extended to all of $\K_\lambda$ in the following way:

\begin{defin}\label{star-def}
  Let $\s = (\K_{\s}, \nf)$ be a good $(\le \lambda, \lambda)$-frame generating an AEC $\K$. We define a pair $\s^\ast = (\K_{\s^\ast}, \nf_{\s^\ast})$ as follows:

  \begin{enumerate}
  \item $\K_{\s^\ast} = (K_{\s^\ast}, \leap{\s^\ast})$, where:
    \begin{enumerate}
    \item $K_{\s^\ast} = K_{\lambda^+}$.
    \item For $M, N \in K_{\s^\ast}$, $M \leap{\s^\ast} N$ holds if and only if there exists increasing continuous chains $\seq{M_i : i < \lambda^+}$, $\seq{N_i : i < \lambda^+}$ in $\K_\lambda$ such that:

  \begin{enumerate}
  \item $M = \bigcup_{i < \lambda^+} M_i$.
  \item $N = \bigcup_{i < \lambda^+} N_i$.
  \item For all $i < j < \lambda^+$, $\nfs{M_i}{N_i}{M_{j}}{N_{j}}$.
  \end{enumerate}
    \end{enumerate}
  \item For $M_0 \leap{\s^\ast} M \leap{\s^\ast} N$ and $a \in N$, $\nf_{\s^\ast} (M_0, a, M, N)$ holds if and only if there exists $M_0' \in \PKp{\lambda} (M_0)$ such that for all $M' \in \PKp{\lambda} (M)$ and all $N' \in \PKp{\lambda} (N)$, if $M_0' \lea M' \lea N'$ and $a \in N'$, then $\nf_{\s} (M_0', a, M', N')$.
  \end{enumerate}

  For a weakly successful categorical good $\lambda$-frame $\s$, we define $\s^\ast := \ts^\ast$, where $\ts$ is the unique extension of $\s$ to a good $(\le \lambda, \lambda)$-frame (Fact \ref{weakly-succ-extension}).
\end{defin}

The parts of \cite{jrsh875} referenced by Fact \ref{weakly-succ-facts} apply to the whole of $\K_{\lambda^+}$ (i.e.\ there is no need to restrict to saturated models). Thus we have:

\begin{fact}\label{weakly-succ-facts-2}
  Let $\s$ be a weakly successful categorical good $\lambda$-frame.

  \begin{enumerate}
  \item\label{weakly-succ-ap-2} \cite[6.1.5,6.1.6]{jrsh875} $\s^\ast$ is a $\lambda^+$-frame and $\K_{\s^\ast}$ has amalgamation.
  \item\label{weakly-succ-monot-2} \cite[6.1.6(c)]{jrsh875} if $M_0, M_1, M_2 \in \K_{\s^\ast}$ are such that $M_0 \leap{\s^\ast} M_2$ and $M_0 \lea M_1 \lea M_2$, then $M_0 \leap{\s^\ast} M_1$.
  \end{enumerate}
\end{fact}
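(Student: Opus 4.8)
The plan is to observe that $\s^\ast$ (Definition \ref{star-def}) is built by exactly the recipe used to define $\s^+$ (Definition \ref{succ-def}), the sole difference being that the underlying class is all of $\K_{\lambda^+}$ rather than the possibly smaller class $\Ksatp{\lambda^+}_{\lambda^+}$ of saturated models. So I would revisit the arguments of \cite[6.1.5, 6.1.6]{jrsh875} (equivalently \cite[\S II.7]{shelahaecbook}) that establish the corresponding statements for $\s^+$, namely Fact \ref{weakly-succ-facts}(\ref{weakly-succ-ac}), (\ref{weakly-succ-monot}) and the unlabeled items there about amalgamation, and check that none of them uses saturation of the models in play. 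All that those proofs use about a model of cardinality $\lambda^+$ is that it is an increasing continuous union of models of cardinality $\lambda$, together with the independence calculus for models coming from the unique type-full good $(\le \lambda, \lambda)$-frame $\ts$ extending $\s$ (Fact \ref{weakly-succ-extension}): monotonicity, symmetry, existence, uniqueness, and long transitivity of $\nf_{\ts}$ on models, plus amalgamation and joint embedding in $\K_\lambda$. Each of these is available for arbitrary models of cardinality $\lambda$, so every step of the arguments for $\s^+$ transfers verbatim once $\Ksatp{\lambda^+}_{\lambda^+}$ is replaced by $\K_{\lambda^+}$.

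Concretely, for (\ref{weakly-succ-ap-2}) the points to verify are: (a) $\K_{\s^\ast}$ is an abstract class, i.e.\ $K_{\s^\ast} = K_{\lambda^+}$ is closed under isomorphism, and $\leap{\s^\ast}$ is isomorphism-invariant, satisfies $M \leap{\s^\ast} N \Rightarrow M \subseteq N$ (each witnessing pair of chains gives $M = \bigcup_i M_i \subseteq \bigcup_i N_i = N$), is reflexive (any increasing continuous filtration of the model works, the required nonforking relations holding by monotonicity and existence in $\ts$), and is transitive (given witnessing chains for $M \leap{\s^\ast} N$ and $N \leap{\s^\ast} P$, intersect on a club of $\lambda^+$ the two chains decomposing $N$ and glue using long transitivity and monotonicity of $\nf_{\ts}$, exactly as in \cite{jrsh875}); (b) $\nf_{\s^\ast}$ is a well-defined $4$-ary relation on pairs $(a, M_0, M, N)$ with $M_0 \leap{\s^\ast} M \leap{\s^\ast} N$ and respects $\K_{\s^\ast}$-embeddings, which is immediate since its defining clause is a local condition on the $\lambda$-sized submodels and both $\nf_{\ts}$ and $\leap{\s^\ast}$ are already invariant. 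Together these give that $\s^\ast$ is a $\lambda^+$-frame. Amalgamation of $\K_{\s^\ast}$ is proved just as for $\K_{\s^+}$: from $M \leap{\s^\ast} N_1$ and $M \leap{\s^\ast} N_2$ with witnessing chains, build by induction on $i < \lambda^+$ a commuting directed system of $\lambda$-sized models using $\nf$-amalgamation (symmetry plus existence in $\ts$), take unions to obtain $N_3 \in \K_{\lambda^+}$, and verify that the resulting chains witness $N_\ell \leap{\s^\ast} N_3$ for $\ell = 1,2$; saturation of $M$, $N_1$, $N_2$ is never invoked.

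For (\ref{weakly-succ-monot-2}), given $M_0 \leap{\s^\ast} M_2$ with witnessing chains $\seq{(M_0)_i : i < \lambda^+}$, $\seq{(M_2)_i : i < \lambda^+}$ and $M_0 \lea M_1 \lea M_2$, I would choose a club $C \subseteq \lambda^+$ along which each $(M_2)_i \cap |M_1|$ has cardinality $\lambda$ and $M_1 = \bigcup_{i \in C} \bigl((M_2)_i \cap |M_1|\bigr)$, set $(M_1)_i := (M_2)_i \cap |M_1|$, and re-index $C$ as $\lambda^+$; coherence of $\lea$ gives $(M_0)_i \lea (M_1)_i \lea (M_2)_i$, and monotonicity of $\nf_{\ts}$ then restricts the nonforking relations witnessing $M_0 \leap{\s^\ast} M_2$ down to the chains $\seq{(M_0)_i}$, $\seq{(M_1)_i}$, so that these chains witness $M_0 \leap{\s^\ast} M_1$; this is verbatim \cite[6.1.6(c)]{jrsh875}. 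The main obstacle is not a genuinely new idea but the bookkeeping: one must audit the cited arguments to confirm that saturation is inessential to them. The one place in the development of $\s^+$ where saturation cannot be dropped is the smoothness axiom for $\K_{\s^+}$, equivalently that $\K_{\s^+}$ is an AEC in $\lambda^+$, and that is precisely the content of successfulness, which is deliberately not asserted in this Fact.
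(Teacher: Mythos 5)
Your proposal is correct and matches the paper's own justification, which consists precisely of the observation that the parts of \cite{jrsh875} cited for $\s^+$ are proved for arbitrary (not necessarily saturated) models of cardinality $\lambda^+$ and hence apply verbatim to $\s^\ast$ on all of $\K_{\lambda^+}$. Your additional remark that smoothness of $\K_{\s^\ast}$ is the one point where this breaks down, and that it is deliberately excluded from the statement, is exactly the right caveat.
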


We will also use that every model of $\K_{\lambda^+}$ has a saturated $\leap{\s^\ast}$-extension.

\begin{fact}[7.1.10, 7.1.12(a) in \cite{jrsh875}]\label{nf-sat-ext}
  Let $\s$ be a weakly successful categorical good $\lambda$-frame. For any $M \in \K_{\s^\ast}$, there exists $N \in \K_{\s^+}$ such that $M \leap{\s^\ast} N$. 
\end{fact}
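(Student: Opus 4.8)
The plan is to build, by induction of length $\lambda^+$, a filtration of the desired extension, arranging at each successor step both a one-step instance of model-nonforking and a universality condition, and then to upgrade the one-step nonforking to the full ``matrix'' required by the definition of $\leap{\s^\ast}$ via long transitivity. First I would invoke Fact \ref{weakly-succ-extension} to replace $\s$ by the unique type-full good $(\le \lambda, \lambda)$-frame $\ts$ extending it; the point of this step is that $\ts$, being a good $(<\lambda^+, \lambda)$-frame, has existence, monotonicity, and long transitivity, and that in a type-full frame the ``type of a $\lambda$-sized model over a model'' is always basic. Note that a mere good $\lambda$-frame would not do here, since $\nfs{M_i}{N_i}{M_j}{N_j}$ concerns types of length $\lambda$.

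Fix $M \in \K_{\s^\ast}$ and write $M = \bigcup_{i < \lambda^+} M_i$ as an increasing continuous chain in $\K_\lambda$. I would then construct an increasing continuous chain $\seq{N_i : i < \lambda^+}$ in $\K_\lambda$ with $N_0 := M_0$ such that for all $i < \lambda^+$: (a) $M_i \lea N_i$; (b) $N_{i + 1}$ is universal over $N_i$; (c) $\nfs{M_i}{N_i}{M_{i + 1}}{N_{i + 1}}$. Limit stages are handled by taking unions, which remain in $\K_\lambda$ and preserve continuity. At a successor stage $i \to i + 1$, I would first use the existence property of $\ts$ (applied to the basic type of an enumeration of $N_i$ over $M_i$, realized inside an extension of $M_{i+1}$) to find $N_{i + 1}' \gea N_i$ with $M_{i + 1} \lea N_{i + 1}'$ and $\nfs{M_i}{N_i}{M_{i + 1}}{N_{i + 1}'}$; then, using stability and amalgamation in $\lambda$ (existence of universal extensions), I would pick $N_{i + 1} \gea N_{i + 1}'$ in $\K_\lambda$ universal over $N_i$, and monotonicity of $\ts$ preserves the nonforking statement, yielding (c).

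The next step is to observe that (c) already yields the full matrix condition $\nfs{M_i}{N_i}{M_j}{N_j}$ for all $i < j < \lambda^+$: one argues by induction on $j$, applying long transitivity of $\ts$ to the two-step chains $M_i \lea M_j \lea M_{j + 1}$, $N_i \lea N_j \lea N_{j + 1}$ at successor stages, and to the increasing continuous chains $\seq{M_k : i \le k \le \delta}$, $\seq{N_k : i \le k \le \delta}$ (whose consecutive-index instances of nonforking are given by (c)) at a limit stage $\delta$. Setting $N := \bigcup_{i < \lambda^+} N_i \in \K_{\s^\ast}$, the chains $\seq{M_i : i < \lambda^+}$ and $\seq{N_i : i < \lambda^+}$ then witness $M \leap{\s^\ast} N$ directly from the definition. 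Finally, I would check $N \in \K_{\s^+}$, i.e.\ that $N$ is saturated: given $M' \lea N$ with $\|M'\| = \lambda$, continuity of the chain and coherence give $i < \lambda^+$ with $M' \lea N_i$, and any type over $M'$ is realized in a $\lambda$-sized extension of $N_i$ (amalgamation in $\lambda$), which $\K$-embeds into $N_{i + 1}$ over $N_i$, hence over $M'$, by universality; so $N$ realizes all types over its $\lambda$-sized $\K$-substructures and is therefore saturated by the model-homogeneous = saturated lemma. The main obstacle is the successor-step bookkeeping — securing the nonforking extension and the universality of $N_{i + 1}$ over $N_i$ simultaneously — together with the essential use of long transitivity of $\ts$, which is precisely the reason one passes to the $(\le \lambda, \lambda)$-frame before running the construction.
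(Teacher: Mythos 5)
The paper does not actually prove this statement: it is quoted as a Fact from Jarden--Shelah (7.1.10 and 7.1.12(a) of \cite{jrsh875}), so there is no internal proof to compare against. Your argument is correct in its essentials and is, as far as I can tell, the same construction carried out in that source: pass to the unique type-full good $(\le \lambda, \lambda)$-frame so that model-nonforking, existence for $\lambda$-length types, and long transitivity are available; build $\seq{N_i : i < \lambda^+}$ with $M_i \lea N_i$, consecutive nonforking, and universality at successors; upgrade to the full matrix $\nfs{M_i}{N_i}{M_j}{N_j}$ by long transitivity applied to the re-indexed (still increasing continuous) subchains; and get saturation of the union from universality, noting that since the generated AEC has $\K_{<\lambda} = \emptyset$ only $\lambda$-sized submodels need to be considered. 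Two points you gloss over deserve a sentence each. First, with $N_0 = M_0$ the instance $\nfs{M_0}{M_0}{M_1}{N_1}$ is not literally produced by your successor recipe (you cannot ``move'' a realization of the nonforking extension onto $\bar{m}_0$, which already sits inside $M_1$); it instead follows from existence together with the tuple-restriction clause of monotonicity, since an embedding witnessing equality of types fixes $M_0$ pointwise and hence fixes the $M_0$-part of the realizing tuple. Second, at a general successor step the existence property hands you an isomorphic copy of $N_i$ realizing the nonforking extension over $M_{i+1}$, and identifying that copy with $N_i$ itself while keeping $M_{i+1}$ pointwise fixed requires the usual renaming bookkeeping (e.g.\ maintaining $|N_i| \cap |M| = |M_i|$ throughout, or constructing a directed system of embeddings and renaming once at the end). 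Both are routine, so I see no genuine gap.
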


Similarly to Definition \ref{reflect-def}, we name what it means for $\leap{\s^\ast}$ to be trivial:

\begin{defin}\label{reflect-def-2}
  Let $\s$ be a good $(\le \lambda, \lambda)$-frame generating an AEC $\K$. We say that $\s$ \emph{strongly reflects down} if $M \lea N$ implies $M \leap{\s^\ast} N$ for all $M, N \in \K_{\s^\ast}$. We say that a weakly successful good $\lambda$-frame \emph{strongly reflects down} if its extension to a good $(\le \lambda, \lambda)$-frame strongly reflects down, see Fact \ref{weakly-succ-extension}.
\end{defin}

We establish the following criteria for strongly reflecting down:

\begin{thm}\label{strong-reflection-thm}
  Let $\s$ be a weakly successful categorical good $\lambda$-frame generating an AEC $\K$. The following are equivalent:
  \begin{enumerate}
  \item\label{strong-refl-1} $\s$ reflects down and $\K$ has amalgamation in $\lambda^+$.
  \item\label{strong-refl-2} $\s$ strongly reflects down.
  \end{enumerate}
\end{thm}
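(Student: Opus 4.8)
The plan is to set up the bookkeeping between the two orderings $\lea$ and $\leap{\s^\ast}$, then handle the trivial implication, and finally the substantive one by an amalgamation argument. I would begin by recording two elementary observations. First, comparing Definitions~\ref{succ-def} and~\ref{star-def}, the relation $\leap{\s^+}$ is \emph{exactly} the restriction of $\leap{\s^\ast}$ to saturated models of cardinality $\lambda^+$. Second, $M \leap{\s^\ast} N$ implies $M \lea N$: taking witnessing chains $\seq{M_i : i < \lambda^+}$, $\seq{N_i : i < \lambda^+}$ one has $M_i \lea N_i \lea N$, so $M_i \lea N$ for all $i$, and smoothness in the AEC generated by $\s$ gives $M = \bigcup_i M_i \lea N$. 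Recall also that $\K_{\s^\ast}$ is an abstract class (Fact~\ref{weakly-succ-facts-2}(\ref{weakly-succ-ap-2})), so $\leap{\s^\ast}$ is transitive and invariant under isomorphism.

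For (\ref{strong-refl-2}) implies (\ref{strong-refl-1}): if $\s$ strongly reflects down and $M \lea N$ are saturated in $\K_{\lambda^+}$, then $M \leap{\s^\ast} N$, hence $M \leap{\s^+} N$ by the first observation, so $\s$ reflects down. For amalgamation in $\lambda^+$, given $M_0 \lea M_\ell$ ($\ell = 1,2$) in $\K_{\lambda^+}$, strongly reflecting down upgrades these to $M_0 \leap{\s^\ast} M_\ell$, so amalgamation of $\K_{\s^\ast}$ (Fact~\ref{weakly-succ-facts-2}(\ref{weakly-succ-ap-2})) applies; the resulting $\leap{\s^\ast}$-embeddings are in particular $\lea$-embeddings, which is what was wanted.

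For (\ref{strong-refl-1}) implies (\ref{strong-refl-2}) the plan is an amalgamation argument. Given $M \lea N$ in $\K_{\lambda^+}$, I would first use Fact~\ref{nf-sat-ext} to pick saturated $M', N' \in \K_{\s^+}$ with $M \leap{\s^\ast} M'$ and $N \leap{\s^\ast} N'$ (so also $M \lea N'$), then amalgamate $M'$ and $N'$ over $M$ using amalgamation in $\lambda^+$; we may assume $M' \lea P$ for some $P \in \K_{\lambda^+}$ together with a $\lea$-embedding $j : N' \to P$ fixing $M$ pointwise. Applying Fact~\ref{nf-sat-ext} once more gives saturated $Q \in \K_{\s^+}$ with $P \leap{\s^\ast} Q$; then $M' \lea Q$ and $j[N'] \lea Q$ hold between saturated models, so since $\s$ reflects down, $M' \leap{\s^\ast} Q$ and $j[N'] \leap{\s^\ast} Q$. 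Transitivity of $\leap{\s^\ast}$ gives $M \leap{\s^\ast} M' \leap{\s^\ast} Q$, and applying $j$ to $N \leap{\s^\ast} N'$ gives $j[N] \leap{\s^\ast} j[N'] \leap{\s^\ast} Q$. Since $j$ fixes $M$ and $M \lea N$, we have $M \lea j[N] \lea Q$ with $M \leap{\s^\ast} Q$, so the monotonicity property in Fact~\ref{weakly-succ-facts-2}(\ref{weakly-succ-monot-2}) yields $M \leap{\s^\ast} j[N]$; pulling back along the isomorphism $j^{-1} : j[N] \cong N$ (which fixes $M$) gives $M \leap{\s^\ast} N$, as desired.

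The only real content is the shuttling between $\lea$ and $\leap{\s^\ast}$: amalgamation in $\lambda^+$ is available only for $\lea$, while both the reflecting-down hypothesis and the target statement are about $\leap{\s^\ast}$. The three devices that bridge them are Fact~\ref{nf-sat-ext} (to climb to a saturated $\leap{\s^\ast}$-extension), the reflecting-down hypothesis (to upgrade $\lea$ to $\leap{\s^\ast}$ between saturated models), and the coherence-type monotonicity of Fact~\ref{weakly-succ-facts-2}(\ref{weakly-succ-monot-2}) (to descend from $Q$ back to $j[N]$); I do not expect any serious obstacle beyond arranging these moves in the right order.
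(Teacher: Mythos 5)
Your proof is correct and follows essentially the same route as the paper's: climb to a saturated $\leap{\s^\ast}$-extension of $M$ via Fact \ref{nf-sat-ext}, produce a copy of $N$ over $M$ inside a saturated model lying $\leap{\s^\ast}$-above $M$, then descend using the coherence property of Fact \ref{weakly-succ-facts-2}(\ref{weakly-succ-monot-2}) and conclude by invariance. The only (harmless) difference is in how the copy of $N$ is obtained: the paper takes $N'$ universal over $M'$ inside the good $\lambda^+$-frame $\s^+$ and embeds $N$ into it, whereas you amalgamate directly over $M$ and apply Fact \ref{nf-sat-ext} a second time together with the reflecting-down hypothesis to upgrade the resulting $\lea$'s to $\leap{\s^\ast}$.
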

\begin{proof}
  (\ref{strong-refl-2}) implies (\ref{strong-refl-1}) is Fact \ref{weakly-succ-facts-2}(\ref{weakly-succ-ap-2}). Assume now that (\ref{strong-refl-1}) holds. Let $M, N \in \K_{\s^\ast}$ be such that $M \lea N$. By Fact \ref{nf-sat-ext}, there exists $M' \in \K_{\s^+}$ such that $M \leap{\s^\ast} M'$. By Facts \ref{good-ext-thm} and \ref{reflect-equiv-fact}, $\s^+$ is a good $\lambda^+$-frame. In particular, there exists $N' \in \K_{\s^+}$ such that $M' \leap{s^+} N'$ and $N'$ is universal over $M'$ in $\K_{\s^+}$. Since $\K$ has amalgamation in $\lambda^+$ and $\leap{\s^+}$ is the same as $\lea$ by definition of reflecting down, $N'$ is universal over $M$. Moreover, $M \leap{\s^\ast} N'$ by transitivity of $\leap{\s^\ast}$. Now let $f: N \xrightarrow[M]{} N'$ by a $\K$-embedding. By Fact \ref{weakly-succ-facts-2}(\ref{weakly-succ-monot-2}), $M \leap{\s^\ast} f[N]$, so by invariance $M \leap{\s^\ast} N$, as desired.
\end{proof}

It is known that strongly reflecting down gives tameness. This is \cite[7.1.17(b)]{jrsh875}. There is a mistake in the statement given by Jarden and Shelah ($\preceq$ there should be $\preceq_{\text{NF}}$) but the proof is still correct.

\begin{fact}\label{strong-tameness-fact}
  Let $\s$ be a weakly successful categorical good $\lambda$-frame generating an AEC $\K$. If $\s$ strongly reflects down, then $\K$ is $(\lambda, \lambda^+)$-tame.
\end{fact}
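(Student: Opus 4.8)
The plan is to establish $(\lambda,\lambda^+)$-tameness for orbital types of length one---the content of the statement, since a type over $M\in\K_{\lambda^+}$ is determined by its restrictions to submodels of size $\le\lambda$, equivalently by its restrictions to members of $\PKp{\lambda}(M)$---by a back-and-forth construction of length $\lambda^+$ that pushes the problem down to the $\lambda$-level. By Fact \ref{reflect-equiv-fact}(\ref{reflect-equiv-1}) we may assume $\s$ is type-full, and by Fact \ref{weakly-succ-extension} there is a (unique) type-full good $(\le\lambda,\lambda)$-frame $\ts$ extending $\s$, which has the existence, uniqueness, monotonicity, and non-order properties. The hypothesis enters in two ways. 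Since $\ts$ exists we have an honest independence notion for (enumerations of) models of size $\le\lambda$. And since $\s$ \emph{strongly} reflects down, $\leap{\s^\ast}$ coincides with $\lea$ on all of $\K_{\lambda^+}$, so by Fact \ref{weakly-succ-facts-2}(\ref{weakly-succ-ap-2}) $\K$ has amalgamation in $\lambda^+$, and (Definition \ref{star-def}) every pair $M\lea N$ in $\K_{\lambda^+}$ admits an \emph{NF-resolution}: $\lea$-increasing continuous chains $\seq{M_i:i<\lambda^+}$, $\seq{N_i:i<\lambda^+}$ in $\K_\lambda$ with $\bigcup_i M_i=M$, $\bigcup_i N_i=N$, $M_i\lea N_i$, and $\gtp(N_i/M_j;N_j)$ (the type of an enumeration of $N_i$) not $\ts$-forking over $M_i$ for all $i<j$. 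A routine closing-off argument lets us also require $N_i\cap M=M_i$ for all $i$ and, after reindexing, that any prescribed pair of elements of $N$ already lies in $N_0$.

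Now fix $M\in\K_{\lambda^+}$ and $p,q\in\gS(M)$ that agree on every member of $\PKp{\lambda}(M)$. Using amalgamation in $\lambda^+$, write $p=\gtp(a/M;N)$ and $q=\gtp(b/M;N)$ for one $N\gea M$ with $a,b\in N$, and fix an NF-resolution $M=\bigcup_i M_i$, $N=\bigcup_i N_i$ as above with $a,b\in N_0$ and $N_i\cap M=M_i$; then $\gtp(a/M_i;N_i)=p\rest M_i=q\rest M_i=\gtp(b/M_i;N_i)$ for every $i$. I would construct, by induction on $i<\lambda^+$, a $\lea$-increasing continuous chain $\seq{N_i^\ast:i<\lambda^+}$ in $\K_\lambda$ with $N_i\lea N_i^\ast$ and increasing continuous $\K$-embeddings $f_i:N_i\to N_i^\ast$ with $f_i\rest M_i=\id_{M_i}$ and $f_0(a)=b$. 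Limit stages are unions. The base stage uses only $\gtp(a/M_0;N_0)=\gtp(b/M_0;N_0)$: with amalgamation in $\lambda$ this yields $N_0^\ast\gea N_0$ and $f_0:N_0\to N_0^\ast$ fixing $M_0$ with $f_0(a)=b$. Once the chain is built, $f:=\bigcup_{i<\lambda^+}f_i$ is a $\K$-embedding of $N$ into $N^\ast:=\bigcup_i N_i^\ast$; by smoothness $N\lea N^\ast$, and $f\rest M=\id_M$ (as $M=\bigcup_i M_i$) and $f(a)=b$, so $\gtp(a/M;N)=\gtp(b/M;N)$, i.e.\ $p=q$, as desired.

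The crux---and the step where the hypothesis is really used---is the successor stage: from $f_i:N_i\to N_i^\ast$ fixing $M_i$, produce $f_{i+1}:N_{i+1}\to N_{i+1}^\ast\gea N_{i+1}$ that extends $f_i$ \emph{and} is the identity on the strictly larger $M_{i+1}$. This is exactly a problem about uniqueness of nonforking amalgams, and it is solvable because $N_{i+1}$ realizes a $\ts$-nonforking amalgam of $N_i$ and $M_{i+1}$ over $M_i$ (the defining property of the NF-resolution) and $\ts$ has uniqueness: using amalgamation in $\lambda$ and the existence property of $\ts$, build $N_{i+1}^\ast$ containing $N_i^\ast$ and $M_{i+1}$ with $N_i^\ast$ (hence $f_i[N_i]$, by monotonicity and non-order) $\ts$-free from $M_{i+1}$ over $M_i$; then $\gtp(N_i/M_{i+1};N_{i+1})$ and $\gtp(f_i[N_i]/M_{i+1};N_{i+1}^\ast)$ are both $\ts$-nonforking extensions over $M_i$ of the same type (conjugate via $f_i$, which fixes $M_i$), so they are equal, and this equality supplies an embedding into a further $\lambda$-sized extension matching $f_i$ on $N_i$ with the inclusion of $M_{i+1}$; that embedding is the required $f_{i+1}$. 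I expect this successor step---reconciling the constraint that $f_{i+1}$ extend $f_i$ with the constraint that it fix all of $M_{i+1}$, through the NF-amalgamation uniqueness---to be essentially the whole difficulty; keeping the chains increasing, continuous, and of size $\lambda$ throughout is routine bookkeeping with amalgamation and the Löwenheim--Skolem axiom in $\lambda$.
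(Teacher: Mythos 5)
First, note that the paper does not prove this statement itself: it is quoted as a Fact from Jarden--Shelah \cite[7.1.17(b)]{jrsh875}, so there is no internal proof to compare against; your argument has to be judged on its own. Your overall strategy --- use strong reflection to get amalgamation in $\lambda^+$ and an NF-resolution $\seq{M_i}, \seq{N_i}$ of the pair $M \lea N$, then run a transfinite construction of embeddings $f_i : N_i \to N_i^\ast$ fixing $M_i$, extending $f_{i}$ to $f_{i+1}$ at successors via uniqueness of nonforking extensions of the model-length type $\gtp(\bar{n}_i/M_{i+1};N_{i+1})$ in the type-full good $(\le\lambda,\lambda)$-frame --- is the right one and is in the spirit of the cited proof. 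The successor-step mechanism (two nonforking extensions over $M_i$ of conjugate restrictions must agree) is correct.

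There is, however, a concrete gap between your successor step and your endgame. Your conclusion requires $N \lea N^\ast$ (you invoke smoothness for this), which in turn requires maintaining $N_{i+1} \lea N_{i+1}^\ast$ literally as a $\lea$-submodel. But the uniqueness argument you describe does not deliver this: after realizing a nonforking copy of $N_i^\ast$ over $M_{i+1}$ and matching types, what you obtain is a target containing $N_i^\ast$, $M_{i+1}$, and the \emph{image} $f_{i+1}[N_{i+1}]$ --- not $N_{i+1}$ itself. Trying to then adjoin $N_{i+1}$ by amalgamating it with the target over $N_i$ fails, because that amalgamation creates a second, unidentified copy of $M_{i+1}\setminus M_i$; amalgamating over a model containing both $N_i$ and $M_{i+1}$ is circular. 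Without $N \lea N^\ast$, the construction only yields $\gtp(a/M;N) = \gtp(b/M;N^\ast)$, and nothing identifies $\gtp(b/M;N^\ast)$ with $\gtp(b/M;N) = q$. The standard repair is to symmetrize: build \emph{two} increasing coherent systems $f_i, g_i : N_i \to P_i$ into a single increasing continuous chain $\seq{P_i}$, both fixing $M_i$, with $f_0(a) = g_0(b)$ at the base (which only uses $\gtp(a/M_0;N_0) = \gtp(b/M_0;N_0)$), each extended at successors by exactly your uniqueness argument (first arrange $\nfs{M_i}{P_i}{M_{i+1}}{P_i'}$, then extend $f$ and then $g$). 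At the end $f, g : N \to P$ fix $M$ and satisfy $f(a) = g(b)$, which \emph{is} the definition of $\gtp(a/M;N) = \gtp(b/M;N)$; no containment $N \lea P$ is needed. With that modification your proof goes through.
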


We obtain that tameness is equivalent to being able to extend the frame. Note that only (\ref{strong-tameness-15}) implies (\ref{strong-tameness-1}) and (\ref{strong-tameness-3}) implies (\ref{strong-tameness-1}) are new.

\begin{cor}\label{strong-tameness-cor}
  Let $\s$ be a weakly successful categorical good $\lambda$-frame generating an AEC $\K$. The following are equivalent:

  \begin{enumerate}
  \item\label{strong-tameness-1} $\s$ strongly reflects down.
  \item\label{strong-tameness-15} $\K$ has amalgamation in $\lambda^+$ and is $(\lambda, \lambda^+)$-weakly tame.    
  \item\label{strong-tameness-2} $\K$ has amalgamation in $\lambda^+$ and is $(\lambda, \lambda^+)$-tame.
  \item\label{strong-tameness-3} There is a good $\lambda^+$-frame on $\K_{\lambda^+}$.
  \end{enumerate}
\end{cor}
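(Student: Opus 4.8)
The plan is to close the loop between the four conditions, noting that each of them already forces $\K$ to have amalgamation in $\lambda^+$: this is explicit in (\ref{strong-tameness-15}) and (\ref{strong-tameness-2}), part of the definition of a good frame in (\ref{strong-tameness-3}), and a consequence of Theorem \ref{strong-reflection-thm} in (\ref{strong-tameness-1}). Several implications are then immediate from results already established: if $\s$ strongly reflects down then $\K$ is $(\lambda, \lambda^+)$-tame by Fact \ref{strong-tameness-fact}, so (\ref{strong-tameness-1}) $\Rightarrow$ (\ref{strong-tameness-2}); $(\lambda, \lambda^+)$-tameness trivially gives $(\lambda, \lambda^+)$-weak tameness, so (\ref{strong-tameness-2}) $\Rightarrow$ (\ref{strong-tameness-15}); and Fact \ref{ext-frame} gives (\ref{strong-tameness-2}) $\Rightarrow$ (\ref{strong-tameness-3}). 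It therefore suffices to establish the two implications into (\ref{strong-tameness-1}), namely (\ref{strong-tameness-15}) $\Rightarrow$ (\ref{strong-tameness-1}) and (\ref{strong-tameness-3}) $\Rightarrow$ (\ref{strong-tameness-1}).

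For (\ref{strong-tameness-15}) $\Rightarrow$ (\ref{strong-tameness-1}), assume $\K$ has amalgamation in $\lambda^+$ and is $(\lambda, \lambda^+)$-weakly tame. Since $\s$ is weakly successful, Fact \ref{nf-sat-ext} provides, for every $M \in \K_{\lambda^+}$, a saturated $N \in \K_{\lambda^+}$ with $M \lea N$; combined with amalgamation in $\lambda^+$ this shows that $\Ksatp{\lambda^+}_{\lambda^+}$ has amalgamation (amalgamate a given triple inside $\K_{\lambda^+}$, then replace the amalgam by a saturated extension of it). Thus condition (\ref{reflect-equiv-4}) of Fact \ref{reflect-equiv-fact} holds ($\s$ is weakly successful, $\K$ is $(\lambda, \lambda^+)$-weakly tame, and $\Ksatp{\lambda^+}_{\lambda^+}$ has amalgamation), so by (\ref{reflect-equiv-4}) $\Rightarrow$ (\ref{reflect-equiv-1}) of that fact, $\s$ reflects down. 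Since $\K$ also has amalgamation in $\lambda^+$, Theorem \ref{strong-reflection-thm} upgrades this to $\s$ strongly reflecting down, which is (\ref{strong-tameness-1}).

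For (\ref{strong-tameness-3}) $\Rightarrow$ (\ref{strong-tameness-1}), let $\mathfrak{u}$ be a good $\lambda^+$-frame on $\K_{\lambda^+}$. Then $\K$ has amalgamation in $\lambda^+$ by the definition of a good frame, and $\s$ is decent by Lemma \ref{decent-from-frame}; hence by Theorem \ref{decent-thm} together with Fact \ref{sl-easy} the class $\Ksatp{\lambda^+}_{\lambda^+}$ is an AEC in $\lambda^+$, and (as in the previous paragraph, via Fact \ref{nf-sat-ext}) it has amalgamation. The restriction of $\mathfrak{u}$ to $\Ksatp{\lambda^+}_{\lambda^+}$ is then a good $\lambda^+$-frame on $\Ksatp{\lambda^+}_{\lambda^+}$: this is a routine check, since $\Ksatp{\lambda^+}_{\lambda^+}$ is a categorical AEC in $\lambda^+$ with amalgamation and the forking axioms descend to the subclass, the only point requiring care being symmetry, where one enlarges the witnessing models to saturated ones. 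Theorem \ref{main-thm} now gives that $\s$ reflects down, and Theorem \ref{strong-reflection-thm} again upgrades this to $\s$ strongly reflecting down. (Alternatively, having produced a good $\lambda^+$-frame on $\Ksatp{\lambda^+}_{\lambda^+}$, one may invoke Corollary \ref{main-cor} to deduce $(\lambda, \lambda^+)$-weak tameness, i.e.\ (\ref{strong-tameness-15}), and finish via the previous paragraph.) I expect this last step — manufacturing a genuine good $\lambda^+$-frame on the categorical subclass $\Ksatp{\lambda^+}_{\lambda^+}$ out of one on all of $\K_{\lambda^+}$ — to be the main obstacle, since both Theorem \ref{main-thm} and the conjugation property it relies on (Fact \ref{conj-fact}) genuinely require the frame to sit on the categorical class of saturated models.
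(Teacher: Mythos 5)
Your proposal is correct and follows essentially the same route as the paper: the same cycle of implications, with (\ref{strong-tameness-1})~$\Rightarrow$~(\ref{strong-tameness-2}) via Fact \ref{strong-tameness-fact}, (\ref{strong-tameness-2})~$\Rightarrow$~(\ref{strong-tameness-3}) via Fact \ref{ext-frame}, (\ref{strong-tameness-15})~$\Rightarrow$~(\ref{strong-tameness-1}) via Fact \ref{reflect-equiv-fact}/Corollary \ref{main-cor} plus Theorem \ref{strong-reflection-thm}, and (\ref{strong-tameness-3})~$\Rightarrow$~(\ref{strong-tameness-1}) via Lemma \ref{decent-from-frame}, Theorem \ref{decent-thm}, Fact \ref{sl-easy}, Theorem \ref{main-thm}, and Theorem \ref{strong-reflection-thm}. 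The only difference is that you spell out two steps the paper leaves implicit (that $\Ksatp{\lambda^+}_{\lambda^+}$ inherits amalgamation via Fact \ref{nf-sat-ext}, and that the good $\lambda^+$-frame restricts to the saturated models), which the paper treats as routine.
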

\begin{proof} \
  \begin{itemize}
  \item \underline{(\ref{strong-tameness-1}) implies (\ref{strong-tameness-2})}: By Facts \ref{weakly-succ-facts-2}(\ref{weakly-succ-ap-2}) and \ref{strong-tameness-fact}.
  \item \underline{(\ref{strong-tameness-2}) implies (\ref{strong-tameness-15})}: Trivial.
  \item \underline{(\ref{strong-tameness-15}) implies (\ref{strong-tameness-1})}: By Corollary \ref{main-cor}, $\s$ is successful and decent. By Fact \ref{reflect-equiv-fact}, $\s$ reflects down. By Theorem \ref{strong-reflection-thm}, $\s$ strongly reflects down.
  \item \underline{(\ref{strong-tameness-2}) implies (\ref{strong-tameness-3})}: By Fact \ref{ext-frame}.
  \item \underline{(\ref{strong-tameness-3}) implies (\ref{strong-tameness-1})}: Assume that (\ref{strong-tameness-3}) holds. By Lemma \ref{decent-from-frame}, $\s$ is decent so by Theorem \ref{decent-thm}, $\K$ has a superlimit in $\lambda^+$. Thus by Fact \ref{sl-easy} we can restrict the good $\lambda^+$-frame on $\K_{\lambda^+}$ to the class $\Ksatp{\lambda^+}_{\lambda^+}$ and still obtain a good $\lambda^+$-frame. By Theorem \ref{main-thm}, $\s$ reflects down. Since we are assuming there is a good $\lambda^+$-frame on $\K_{\lambda^+}$, $\K$ has amalgamation in $\lambda^+$, so by Theorem \ref{strong-reflection-thm}, $\s$ strongly reflects down.
  \end{itemize}
\end{proof}

Assuming $2^{\lambda} < 2^{\lambda^+}$, we do not need to assume that $\s$ is weakly successful. We do not repeat all the equivalences of Corollary \ref{strong-tameness-cor} and only state our main result:

\begin{cor}\label{main-cor-simplified}
  Let $\K$ be an AEC, let $\lambda \ge \LS (\K)$ and assume that $2^{\lambda} < 2^{\lambda^+}$. If there is a categorical good $\lambda$-frame $\s$ on $\K_{\lambda}$ and a good $\lambda^+$-frame $\ts$ on $\K_{\lambda^+}$, then $\K$ is $(\lambda, \lambda^+)$-tame. Moreover, $\ts \rest \Ksatp{\lambda^+}_{\lambda^+} = \s_{\lambda^+} \rest \Ksatp{\lambda^+}_{\lambda^+}$ (see Definition \ref{s-lambdap-def}).
\end{cor}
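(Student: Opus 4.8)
The plan is to assemble results from the previous sections: the only genuinely new input needed is to extract weak successfulness of $\s$ from the cardinal arithmetic, after which $(\lambda,\lambda^+)$-tameness is immediate from Corollary \ref{strong-tameness-cor} and the ``moreover'' from Corollary \ref{main-cor}. Since $\lambda \ge \LS (\K)$ and $\s$ is categorical, I would first note that we may assume $\K$ is the AEC generated by $\s$: this changes neither $\K_\lambda$ nor $\K_{\lambda^+}$, the only levels relevant to the conclusion, by Remark \ref{gen-rmk} and \cite[II.1.23]{shelahaecbook}.

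Next I would observe that the existence of a good $\lambda^+$-frame $\ts$ on $\K_{\lambda^+}$ forces $\K_{\lambda^+}$ to be stable in $\lambda^+$, to have amalgamation, and to have no maximal models; hence every saturated $M \in \K_{\lambda^+}$ has a universal (in particular proper) extension in $\K_{\lambda^+}$. Feeding this and the hypothesis $2^{\lambda} < 2^{\lambda^+}$ into Fact \ref{weakly-succ-gch} yields that $\s$ is weakly successful. This is the sole use of the set-theoretic hypothesis.

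At this point $\s$ is a weakly successful categorical good $\lambda$-frame generating $\K$, and there is a good $\lambda^+$-frame on $\K_{\lambda^+}$ --- precisely condition (\ref{strong-tameness-3}) of Corollary \ref{strong-tameness-cor}. Invoking that corollary, condition (\ref{strong-tameness-2}) holds: $\K$ has amalgamation in $\lambda^+$ and is $(\lambda,\lambda^+)$-tame. This is where the real work lives --- it runs, via the implications (\ref{strong-tameness-3})$\Rightarrow$(\ref{strong-tameness-1})$\Rightarrow$(\ref{strong-tameness-2}) of Corollary \ref{strong-tameness-cor}, through Theorem \ref{main-thm} (a good $\lambda^+$-frame makes a weakly successful $\s$ reflect down), Theorem \ref{decent-thm}, and Theorem \ref{strong-reflection-thm} --- but all of it is already available, so no obstacle is encountered at this stage.

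Finally, for the ``moreover'' I would argue as follows. By Lemma \ref{decent-from-frame}, the good $\lambda^+$-frame on $\K_{\lambda^+}$ makes $\s$ decent, so by Theorem \ref{decent-thm} $\K$ has a superlimit in $\lambda^+$; by Fact \ref{sl-easy} the class $\Ksatp{\lambda^+}_{\lambda^+}$ is then an AEC in $\lambda^+$, and $\ts \rest \Ksatp{\lambda^+}_{\lambda^+}$ is still a good $\lambda^+$-frame on it. The ``moreover'' part of Corollary \ref{main-cor} (whose hypotheses hold, $\s$ being weakly successful) then says that this restricted frame coincides --- on its basic types, equivalently after passing to the unique type-full extension --- with $\s_{\lambda^+} \rest \Ksatp{\lambda^+}_{\lambda^+}$, as claimed. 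I do not anticipate any serious difficulty in this last paragraph; the only care needed is the bookkeeping to pass from ``$\ts$ on $\K_{\lambda^+}$'' to the saturated-model frames used in the earlier statements, and --- if one wants literal equality of frames rather than of their basic types --- the standard type-full normalization.
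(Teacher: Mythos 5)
Your proposal is correct and follows essentially the same route as the paper's proof: derive weak successfulness of $\s$ from $2^{\lambda} < 2^{\lambda^+}$ via Fact \ref{weakly-succ-gch} (whose hypothesis you rightly verify using the universal extensions supplied by the good $\lambda^+$-frame), then invoke Corollary \ref{strong-tameness-cor} for tameness and the moreover part of Corollary \ref{main-cor} for the identification of the frames. The extra bookkeeping you supply --- restricting $\ts$ to $\Ksatp{\lambda^+}_{\lambda^+}$ via decency and Fact \ref{sl-easy} --- is exactly what the paper leaves implicit.
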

\begin{proof}
  Let $\s$ be a categorical good $\lambda$-frame on $\K_\lambda$. By Fact \ref{weakly-succ-gch}, $\s$ is weakly successful. Now apply Corollary \ref{strong-tameness-cor}. The moreover part follows from the moreover part of Corollary \ref{main-cor}.
\end{proof}

\section{On categoricity in two successive cardinals}\label{categ-sec}

Grossberg and VanDieren have shown \cite[6.3]{tamenessthree} that in a $\lambda$-tame AEC with amalgamation and no maximal models, categoricity in $\lambda$ and $\lambda^+$ imply categoricity in all $\mu \ge \lambda$. In \cite{downward-categ-tame-apal}, the author gave a more local conclusion as well as a more abstract proof using good frames. Here, we give a converse: assuming the weak GCH, if we can prove categoricity in $\lambda^{++}$ from categoricity in $\lambda$ and $\lambda^+$, then we must have some tameness. This follows from combining results of Shelah\footnote{Let us sketch how (of course Corollary \ref{categ-cor} gives another proof but it uses the results of the present paper). By Fact \ref{shelah-categ-frame}(\ref{frame-4}), there is a successful good $\lambda$-frame $\s$ on $\K_\lambda$. In fact, the basic types of this good frames are the minimal types. Thus $\s$ is $\goodp$ (as nonforking corresponds to disjoint amalgamation). It then follows from \cite[III.1.11]{shelahaecbook} and categoricity that $\K$ is $(\lambda, \lambda^+)$-tame.} but seems not to have been noticed before. Only (\ref{categ-05}) implies (\ref{categ-1}) below really uses the results of this paper.

\begin{cor}\label{categ-cor}
  Let $\K$ be an AEC and let $\lambda \ge \LS (\K)$. Assume $2^{\lambda} < 2^{\lambda^+} < 2^{\lambda^{++}}$ and assume that $\K$ is categorical in $\lambda$ and $\lambda^{+}$. The following are equivalent:

  \begin{enumerate}

  \item\label{categ-0} There is a successful good $\lambda$-frame on $\K_\lambda$.
  \item\label{categ-05} There is a good $\lambda$-frame on $\K_{\lambda}$ and a good $\lambda^+$-frame on $\K_{\lambda^+}$.
  \item\label{categ-1} $\K$ is stable in $\lambda$, $\K$ is $(\lambda, \lambda^+)$-tame, $\K$ has amalgamation in $\lambda^+$, and $\K_{\lambda^{++}} \neq \emptyset$.
  \item\label{categ-2} $\K$ is categorical in $\lambda^{++}$.
  \item\label{categ-3} $1 \le \Ii (\K, \lambda^{++}) < \mu_{\text{unif}} (\lambda^{++}, 2^{\lambda^+})$.
  \item\label{categ-4} $\K$ is stable in $\lambda$, $\K$ is stable in $\lambda^+$, and $1 \le \Ii (\K, \lambda^{++}) < 2^{\lambda^{++}}$.
  \end{enumerate}
\end{cor}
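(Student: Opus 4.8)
The plan is to establish the equivalences by running the cycle $(\ref{categ-0}) \Rightarrow (\ref{categ-05}) \Rightarrow (\ref{categ-1}) \Rightarrow (\ref{categ-2}) \Rightarrow (\ref{categ-3}) \Rightarrow (\ref{categ-4}) \Rightarrow (\ref{categ-0})$; among these links only $(\ref{categ-05}) \Rightarrow (\ref{categ-1})$ uses the results of this paper (through Corollary \ref{main-cor-simplified}), the others coming from Shelah's book, the Grossberg--VanDieren categoricity transfer, and standard facts on $\mu_{\text{unif}}$. Two remarks will be used repeatedly: since $\K$ is categorical in $\lambda$, any good $\lambda$-frame on $\K_\lambda$ is categorical in the sense of Definition \ref{categ-frame-def}; and since $2^{\lambda^+} < 2^{\lambda^{++}}$, a theorem of Shelah on $\mu_{\text{unif}}$ gives $\mu_{\text{unif}}(\lambda^{++}, 2^{\lambda^+}) = 2^{\lambda^{++}}$, so the cardinal-counting clauses of $(\ref{categ-3})$ and $(\ref{categ-4})$ are interchangeable.

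For $(\ref{categ-0}) \Rightarrow (\ref{categ-05})$, let $\s$ be a successful good $\lambda$-frame on $\K_\lambda$. Amalgamation and stability in $\lambda$ provide a saturated model in $\K_{\lambda^+}$, so by categoricity in $\lambda^+$ \emph{every} model of cardinality $\lambda^+$ is saturated; moreover an increasing chain of such models of length $<\lambda^{++}$ has union of cardinality $\lambda^+$ which, again by categoricity, is isomorphic to the (saturated) model of that size, so by Fact \ref{sl-easy} $\K$ has a superlimit in $\lambda^+$. By Theorem \ref{decent-thm}, $\s$ is decent; being also successful, by Fact \ref{reflect-equiv-fact} ((\ref{reflect-equiv-3}) $\Rightarrow$ (\ref{reflect-equiv-1})) it reflects down, so that $\leap{\s^+}$ is the restriction of $\lea$ and hence $\K_{\s^+} = \K_{\lambda^+}$. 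By Fact \ref{good-ext-thm}, $\s^+$ is a ($\goodp$) good $\lambda^+$-frame, and it is now a good $\lambda^+$-frame on $\K_{\lambda^+}$; together with $\s$ itself this gives $(\ref{categ-05})$.

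For $(\ref{categ-05}) \Rightarrow (\ref{categ-1})$: a good $\lambda$-frame on $\K_\lambda$ yields stability in $\lambda$; a good $\lambda^+$-frame on $\K_{\lambda^+}$ yields amalgamation in $\lambda^+$ and makes $\K_{\lambda^+}$ an AEC in $\lambda^+$ with no maximal models, whence a strictly increasing chain of length $\lambda^{++}$ has a union of cardinality $\lambda^{++}$ in $\K$, so $\K_{\lambda^{++}} \ne \emptyset$; and since the good $\lambda$-frame is categorical and $2^\lambda < 2^{\lambda^+}$, Corollary \ref{main-cor-simplified} gives that $\K$ is $(\lambda, \lambda^+)$-tame. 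For $(\ref{categ-1}) \Rightarrow (\ref{categ-2})$ we apply the upward categoricity transfer of Grossberg and VanDieren \cite[\S 6]{tamenessthree} (or the good-frame argument in \cite{downward-categ-tame-apal}) to the single step from $\lambda, \lambda^+$ to $\lambda^{++}$, which needs only $(\lambda, \lambda^+)$-tameness: amalgamation in $\lambda$ follows from Shelah's amalgamation transfer \cite{sh88} (using $2^\lambda < 2^{\lambda^+}$, categoricity in $\lambda^+$, and $\K_{\lambda^{++}} \ne \emptyset$), amalgamation in $\lambda^+$ is a hypothesis, and joint embedding and no maximal models follow from amalgamation and categoricity; the transfer then shows the model of cardinality $\lambda^{++}$ is saturated, hence $\K$ is categorical in $\lambda^{++}$. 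The remaining links are bookkeeping: $(\ref{categ-2}) \Rightarrow (\ref{categ-3})$ is immediate; $(\ref{categ-3}) \Rightarrow (\ref{categ-4})$ holds because $(\ref{categ-3})$ and the standing hypotheses put us under Fact \ref{shelah-categ-frame}, giving a successful good $\lambda$-frame on $\K_\lambda$ (so stability in $\lambda$) and, via the already proved $(\ref{categ-0}) \Rightarrow (\ref{categ-05})$, a good $\lambda^+$-frame on $\K_{\lambda^+}$ (so stability in $\lambda^+$), while $\Ii(\K, \lambda^{++}) < \mu_{\text{unif}}(\lambda^{++}, 2^{\lambda^+}) = 2^{\lambda^{++}}$; and $(\ref{categ-4}) \Rightarrow (\ref{categ-0})$ is Fact \ref{shelah-categ-frame} once more, since $\Ii(\K, \lambda^{++}) < 2^{\lambda^{++}} = \mu_{\text{unif}}(\lambda^{++}, 2^{\lambda^+})$.

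The new link $(\ref{categ-05}) \Rightarrow (\ref{categ-1})$ is essentially a direct application of Corollary \ref{main-cor-simplified}, so the real work lies elsewhere. The most delicate point is $(\ref{categ-0}) \Rightarrow (\ref{categ-05})$: one must land a good $\lambda^+$-frame on $\K_{\lambda^+}$ itself — with its own ordering $\lea$ — rather than on the a priori different ordered class $\K_{\s^+}$, and this is exactly where categoricity in $\lambda^+$ is used, first to force decency (via Fact \ref{sl-easy} and Theorem \ref{decent-thm}) and then, combined with successfulness, to force reflecting down (via Fact \ref{reflect-equiv-fact}). A secondary point is checking that the Grossberg--VanDieren transfer goes through with only the weak amount of tameness at hand, which relies on upgrading amalgamation in $\lambda^+$ to amalgamation in $\lambda$ through Shelah's classical transfer.
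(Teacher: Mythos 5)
Most of your cycle coincides with the paper's proof: the preliminary reductions (categoricity of the frame, $\Ksatp{\lambda^+}_{\lambda^+} = \K_{\lambda^+}$, decency via Theorem \ref{decent-thm}), the link (\ref{categ-0})$\Rightarrow$(\ref{categ-05}) via reflecting down and Fact \ref{good-ext-thm}, the link (\ref{categ-05})$\Rightarrow$(\ref{categ-1}) via Corollary \ref{main-cor-simplified}, and the counting implications via Fact \ref{shelah-categ-frame} all match. The genuine divergence, and the gap, is your link (\ref{categ-1})$\Rightarrow$(\ref{categ-2}). You propose to apply the Grossberg--VanDieren transfer ``to the single step'' and assert that it ``needs only $(\lambda,\lambda^+)$-tameness.'' But the theorem as cited (\cite[6.3]{tamenessthree}) assumes full $\lambda$-tameness, amalgamation and no maximal models throughout the class, whereas (\ref{categ-1}) gives you only $(\lambda,\lambda^+)$-tameness, amalgamation in $\lambda$ and $\lambda^+$ (not in $\lambda^{++}$), and $\K_{\lambda^{++}} \neq \emptyset$. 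Even stability in $\lambda^+$, which the Grossberg--VanDieren argument uses, does not follow from (\ref{categ-1}) by a naive counting argument; the paper obtains it only after first producing an almost good $\lambda$-frame (Fact \ref{shelah-categ-frame}(\ref{frame-1})) and then invoking the proof of Fact \ref{ext-frame}. This is precisely why the paper does not prove (\ref{categ-1})$\Rightarrow$(\ref{categ-2}) directly: it proves (\ref{categ-1})$\Rightarrow$(\ref{categ-0}) (via Fact \ref{ap-categ}, Fact \ref{shelah-categ-frame}(\ref{frame-1}),(\ref{frame-2}) and Fact \ref{reflect-equiv-fact}) and then (\ref{categ-0})$\Rightarrow$(\ref{categ-2}) by Fact \ref{upward-categ}, which rests on Shelah's \cite[III.2.10, III.2.11]{shelahaecbook} rather than on the tameness transfer. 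As written, your cycle has no verified path out of (\ref{categ-1}), so the equivalence of (\ref{categ-1}) with the other conditions is not established. The fix is exactly the paper's: prove (\ref{categ-1})$\Rightarrow$(\ref{categ-0}) instead.

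A secondary point: you treat $\mu_{\text{unif}}(\lambda^{++}, 2^{\lambda^+}) = 2^{\lambda^{++}}$ as a theorem in order to pass between the counting clauses of (\ref{categ-3}) and (\ref{categ-4}). The paper deliberately avoids asserting this identity unconditionally (it only says one can take it to hold ``for all practical purposes''). Only the inequality $\mu_{\text{unif}}(\lambda^{++}, 2^{\lambda^+}) \le 2^{\lambda^{++}}$ is free, which suffices for (\ref{categ-3})$\Rightarrow$(\ref{categ-4}); for (\ref{categ-4})$\Rightarrow$(\ref{categ-0}) you should invoke Fact \ref{shelah-categ-frame}(\ref{frame-3}), which is stated directly with the bound $2^{\lambda^{++}}$ and needs the stability clauses of (\ref{categ-4}), rather than converting back to $\mu_{\text{unif}}$.
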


On $\mu_{\text{unif}}$, see \cite[VII.0.4]{shelahaecbook2} for a definition and \cite[VII.9.4]{shelahaecbook2} for what is known. It seems that for all practical purposes the reader can take $\mu_{\text{unif}} (\lambda^{++}, 2^{\lambda^+})$ to mean $2^{\lambda^{++}}$.

Note that if the AEC $\K$ of Corollary \ref{categ-cor} has arbitrarily large models, then stability in $\lambda$ would follow from categoricity in $\lambda^+$ \cite[I.1.7(a)]{sh394}. Thus we obtain that $\K$ is categorical in $\lambda^{++}$ if and only if $\K$ is $(\lambda, \lambda^+)$-tame and has amalgamation in $\lambda^{+}$.

To prove Corollary \ref{categ-cor}, we will use several facts:

\begin{fact}[I.3.8 in \cite{shelahaecbook}]\label{ap-categ}
  Let $\K$ be an AEC and let $\lambda \ge \LS (\K)$. Assume $2^{\lambda} < 2^{\lambda^+}$. If $\K$ is categorical in $\lambda$ and $\Ii (\K, \lambda^+) < 2^{\lambda^+}$, then $\K$ has amalgamation in $\lambda$.
\end{fact}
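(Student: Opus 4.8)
The statement is the non-structure theorem I.3.8 of \cite{shelahaecbook}, and the plan is to prove its contrapositive: assuming $\K$ is categorical in $\lambda$ but $\K_\lambda$ fails amalgamation, I would show that $\Ii (\K, \lambda^+) \ge 2^{\lambda^+}$, which contradicts the hypothesis $\Ii (\K, \lambda^+) < 2^{\lambda^+}$. The set-theoretic assumption $2^{\lambda} < 2^{\lambda^+}$ is used only at the very end: it yields the weak diamond on $\lambda^+$ (equivalently, $\mu_{\text{unif}} (\lambda^+, 2^\lambda) = 2^{\lambda^+}$), which is exactly what turns ``a tree of pairwise non-amalgamable approximations'' into ``$2^{\lambda^+}$ pairwise non-isomorphic models''.

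First I would extract a \emph{uniform} failure of amalgamation from categoricity. By categoricity in $\lambda$ there is, up to isomorphism, a single model $M^\ast \in \K_\lambda$, and failure of amalgamation gives (after renaming the base) extensions $M^\ast \lta N_0$ and $M^\ast \lta N_1$ in $\K_\lambda$ with no amalgam over $M^\ast$. Since $\K$ and $\lea$ respect isomorphisms, for \emph{any} $M \in \K_\lambda$ and isomorphism $g : M^\ast \cong M$ one can build $M \lta N^0$, $M \lta N^1$ in $\K_\lambda$ together with isomorphisms $N_\ell \cong N^\ell$ extending $g$, and then $N^0$ and $N^1$ have no amalgam over $M$ (pulling any such amalgam back along the two isomorphisms would amalgamate $N_0, N_1$ over $M^\ast$). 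Iterating this, I would build a tree $\seq{M_\eta : \eta \in \fct{<\lambda^+}{2}}$ of models in $\K_\lambda$, $\lea$-increasing and continuous along each branch, with $M_\emptyset \cong M^\ast$, such that for every $\eta$ the triple $(M_\eta, M_{\eta^\frown 0}, M_{\eta^\frown 1})$ is an isomorphic copy of $(M^\ast, N_0, N_1)$; at limit levels take unions, which remain in $\K_\lambda$ by the chain axioms (the chains have length at most $\lambda$). Thus at every node $\eta$, the successors $M_{\eta^\frown 0}$ and $M_{\eta^\frown 1}$ admit no amalgam over $M_\eta$. For each branch $\eta \in \fct{\lambda^+}{2}$ set $M_\eta := \bigcup_{i < \lambda^+} M_{\eta \rest i} \in \K_{\lambda^+}$, obtaining $2^{\lambda^+}$ models of cardinality $\lambda^+$.

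It remains to see that $2^{\lambda^+}$ of the $M_\eta$ are pairwise non-isomorphic, and this is where I expect essentially all the work to lie. The easier half is that if $f : M_\eta \cong M_\nu$, then, each side being a continuous union of a $\lambda^+$-chain of $\lambda$-sized models, a back-and-forth argument yields a club $C \subseteq \lambda^+$ with $f[M_{\eta \rest i}] = M_{\nu \rest i}$ for all $i \in C$. One would like to conclude that at the level $\delta$ where $\eta$ and $\nu$ first differ, $f$ witnesses an amalgam of the $N_0$-copy $M_{\eta \rest (\delta + 1)}$ and the $N_1$-copy $M_{\nu \rest (\delta + 1)}$ over $M_{\eta \rest \delta} = M_{\nu \rest \delta}$, contradicting the construction. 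The obstacle --- and the real content of the theorem --- is that $\delta$ is a single ordinal and need not lie in $C$, and $f$ need not fix any submodel pointwise, so the bare tree does not close the argument. The resolution, and the place where $2^\lambda < 2^{\lambda^+}$ enters, is to build the tree \emph{generically}: one fixes a weak-diamond coloring $F : \fct{<\lambda^+}{2} \to 2$ that reads off from an initial segment of a branch the relevant partial data of a putative isomorphism, and at each splitting chooses the next bit of the branch so as to defeat $F$ at a stationary set of levels. This is the standard Devlin--Shelah non-structure machinery; pushing it through gives $\Ii (\K, \lambda^+) \ge \mu_{\text{unif}} (\lambda^+, 2^\lambda) = 2^{\lambda^+}$, the desired contradiction, so $\K$ has amalgamation in $\lambda$.
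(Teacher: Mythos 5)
The paper does not actually prove this statement: it is quoted as a black-box Fact from Shelah's book (I.3.8 there), so there is no internal proof to compare against. Your outline does match the standard argument for that result: transplant a single failure of amalgamation everywhere using categoricity in $\lambda$, build a tree $\seq{M_\eta : \eta \in \fct{<\lambda^+}{2}}$ of $\lambda$-sized models with non-amalgamable splittings at every node, pass to the $2^{\lambda^+}$ branch unions, and count non-isomorphic models using $2^{\lambda} < 2^{\lambda^+}$. You also correctly isolate the genuine obstruction: a putative isomorphism $f : M_\eta \cong M_\nu$ only matches the two filtrations on a club, the splitting level $\delta$ need not lie in that club, and even when it does, $f$ restricted to the common base is merely an automorphism rather than the identity, so the naive tree does not immediately contradict non-amalgamability.

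The gap is that the step which resolves this --- the Devlin--Shelah weak diamond coding, in which the tree is built against a coloring $F$ that predicts the restriction of a candidate isomorphism to an initial segment and the branches are chosen to defeat $F$ stationarily often --- is named but not carried out, and for this theorem that step is essentially the entire technical content. One must specify exactly what $F$ colors (a code for $f \rest M_{\eta \rest \delta}$ together with enough data about the two sides), verify that the space of guesses has size at most $2^\lambda$ so that weak diamond applies, and check that defeating $F$ at a single level already kills the isomorphism; this last point is where non-amalgamability over the \emph{pointwise} image of the base, rather than over the base as a set, has to be arranged. Separately, your parenthetical that $2^{\lambda} < 2^{\lambda^+}$ is ``equivalently'' $\mu_{\text{unif}} (\lambda^+, 2^{\lambda}) = 2^{\lambda^+}$ overstates what is known: the hypothesis directly yields the weak diamond $\Phi_{\lambda^+}$, and the identification of $\mu_{\text{unif}}$ with $2^{\lambda^+}$ is exactly the kind of statement the paper itself hedges on. So: right skeleton, right diagnosis of where the difficulty lies, but the hard step is deferred to a citation rather than proved.
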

\begin{fact}\label{shelah-categ-frame}
  Let $\K$ be an AEC and let $\lambda \ge \LS (\K)$. Assume $2^{\lambda} < 2^{\lambda^+}$ and assume that $\K$ is categorical in both $\lambda$ and $\lambda^+$.

  \begin{enumerate}
  \item\label{frame-1} If $\K_{\lambda^{++}} \neq \emptyset$ and $\K$ is stable in $\lambda$, then there is an almost good $\lambda$-frame (see \cite[VI.8.2]{shelahaecbook2}) on $\K_\lambda$.
  \item\label{frame-2} If $\K_{\lambda^{++}} \neq \emptyset$, $\K$ has amalgamation in $\lambda^+$, $\K$ is stable in $\lambda$, and $\K$ is stable in $\lambda^{+}$, then there is a weakly successful good $\lambda$-frame on $\K_\lambda$.
  \item\label{frame-3} If $2^{\lambda^+} < 2^{\lambda^{++}}$, $1 \le \Ii (\K, \lambda^{++}) < 2^{\lambda^{++}}$, $\K$ is stable in $\lambda$, and $\K$ is stable in $\lambda^+$, then there is a successful good $\lambda$-frame on $\K_\lambda$.
  \item\label{frame-4} If $2^{\lambda} < 2^{\lambda^+} < 2^{\lambda^{++}}$ and $1 \le \Ii (\K, \lambda^{++}) < \mu_{\text{unif}} (\lambda^{++}, 2^{\lambda^+})$, then there is a successful good $\lambda$-frame on $\K_\lambda$.
  \end{enumerate}
\end{fact}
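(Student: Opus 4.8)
The statement is a compendium of Shelah's good-frame existence results, so the plan is to build the frame once, on the \emph{minimal types} in $\K_\lambda$, and then read off the four conclusions by feeding in progressively stronger hypotheses. The common core is part (\ref{frame-1}); parts (\ref{frame-2})--(\ref{frame-4}) will be reductions to it together with the non-structure facts already quoted.

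For part (\ref{frame-1}) I would first extract amalgamation in $\lambda$. Categoricity in $\lambda^+$ gives $\Ii(\K, \lambda^+) = 1 < 2^{\lambda^+}$, so since $2^\lambda < 2^{\lambda^+}$, Fact \ref{ap-categ} yields amalgamation in $\K_\lambda$ (joint embedding is trivial from categoricity, and no maximal models follows because $\K_{\lambda^{++}} \neq \emptyset$ forces proper extensions). Categoricity in $\lambda$ makes the single model in $\K_\lambda$ saturated, which gives stability in $\lambda$. With these in hand I would call $p \in \gS(M)$ \emph{minimal} if it is nonalgebraic and has a unique nonalgebraic extension to every $N \gea M$ in $\K_\lambda$, declare the basic types to be the minimal ones, and define nonforking as ``being the unique nonalgebraic extension''. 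The substantive content is then twofold: (i) minimal types are \emph{dense} (every proper extension realizes one), and (ii) the resulting nonforking has existence, uniqueness, a suitable local character, and symmetry. Both are proved by contradiction through the weak-diamond ``many models'' machinery: a failure of density or of uniqueness lets one iterate, using $2^\lambda < 2^{\lambda^+}$ and $\K_{\lambda^{++}} \neq \emptyset$, into a tree of $2^{\lambda^+}$ pairwise non-isomorphic models in $\K_{\lambda^+}$, contradicting categoricity in $\lambda^+$. This is Shelah's construction in Chapter VI of \cite{shelahaecbook2}, and it delivers an almost good $\lambda$-frame in the sense of \cite[VI.8.2]{shelahaecbook2}.

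For part (\ref{frame-2}) I would upgrade the almost good frame to a genuine good frame and then to a weakly successful one. The extra hypotheses — amalgamation and stability in $\lambda^+$ — are precisely what is needed to promote the ``almost'' local character and symmetry to their full form, since a well-behaved saturated model in $\lambda^+$ now exists. The frame is categorical because $\K$ is categorical in $\lambda$, and stability plus amalgamation in $\lambda^+$ provide a universal extension over every saturated $M \in \K_{\lambda^+}$, so Fact \ref{weakly-succ-gch} applies and the frame is weakly successful. Part (\ref{frame-3}) then follows by first producing amalgamation in $\lambda^+$: apply Fact \ref{ap-categ} one level up, with $\lambda$ replaced by $\lambda^+$, using categoricity in $\lambda^+$, the bound $\Ii(\K, \lambda^{++}) < 2^{\lambda^{++}}$, and $2^{\lambda^+} < 2^{\lambda^{++}}$. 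After this all hypotheses of part (\ref{frame-2}) hold ($\K_{\lambda^{++}} \neq \emptyset$ because $\Ii(\K, \lambda^{++}) \ge 1$), giving a weakly successful good $\lambda$-frame; finally Fact \ref{succ-nm}, whose medium-number-of-models hypothesis $\Ii(\K, \lambda^{++}) < 2^{\lambda^{++}}$ is exactly what is assumed, upgrades weak successfulness to successfulness.

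Part (\ref{frame-4}) is the one that needs genuinely new input, and this is where I expect the main obstacle. Here stability in $\lambda$ and $\lambda^+$ are \emph{not} assumed but must be derived, and the counting hypothesis is the sharp $\Ii(\K, \lambda^{++}) < \mu_{\text{unif}}(\lambda^{++}, 2^{\lambda^+})$ rather than the crude $2^{\lambda^{++}}$ bound. Stability in $\lambda$ is cheap (categoricity plus amalgamation in $\lambda$, as in part (\ref{frame-1})), but stability in $\lambda^+$ must be forced: were it to fail, the excess of types in $\lambda^+$ would, through the $\mu_{\text{unif}}$-calibrated non-structure argument of \cite[Chapter VI]{shelahaecbook2}, produce at least $\mu_{\text{unif}}(\lambda^{++}, 2^{\lambda^+})$ non-isomorphic models in $\lambda^{++}$, against the hypothesis. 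Once stability in $\lambda$ and $\lambda^+$ is secured, part (\ref{frame-4}) reduces to part (\ref{frame-3}). The delicate point throughout — and the real heart of the result — is the bookkeeping in these many-models constructions: controlling how a single local failure of a frame axiom propagates, through $\lambda^+$-length chains and the weak diamond, into a full $\mu_{\text{unif}}$-sized family of pairwise non-isomorphic models. That is precisely the part that cannot be shortcut, and is the reason the statement is quoted here as a Fact rather than reproved.
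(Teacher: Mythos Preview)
Your outline for parts~(\ref{frame-1}) and~(\ref{frame-3}) matches the paper's proof closely. Two substantive differences elsewhere deserve comment.

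For part~(\ref{frame-2}), your order of operations --- promote the almost good frame to good, then apply Fact~\ref{weakly-succ-gch} to get weak successfulness --- reverses the paper's. The paper observes that the proof of Fact~\ref{weakly-succ-gch} (that is, of \cite[E.8]{downward-categ-tame-apal}) already goes through for \emph{almost} good frames, so it derives weak successfulness first, and only \emph{then} invokes a result of Jarden and Shelah \cite[4.3]{jash940-v1} to the effect that a weakly successful almost good frame is good. Your claim that amalgamation and stability in $\lambda^+$ alone ``promote the `almost' local character and symmetry to their full form'' is not substantiated, and as far as I know the only route from almost good to good in this setting passes through weak successfulness. So the step is not clearly wrong, but it is a gap as written.

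For part~(\ref{frame-4}), your reduction to~(\ref{frame-3}) via a non-structure derivation of stability in $\lambda^+$ is a plausible strategy but not the paper's. The paper proceeds directly: \cite[VI.8.1]{shelahaecbook2} produces an almost good frame (the $\mu_{\text{unif}}$ bound absorbs the stability arguments internally), \cite[VII.6.17]{shelahaecbook2} gives existence for a variant of uniqueness triples, \cite[VII.7.19(2)]{shelahaecbook2} promotes almost good to good, \cite[II.5.11]{shelahaecbook} gives weak successfulness, and Fact~\ref{succ-nm} finishes. The paper thus never isolates stability in $\lambda^+$ as a separate lemma and never invokes part~(\ref{frame-3}); whether your reduction can be made to work depends on a $\mu_{\text{unif}}$-calibrated instability-implies-many-models result in $\lambda^{++}$ that you would need to cite precisely.
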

\begin{proof} \
  \begin{enumerate}
  \item By Fact \ref{ap-categ}, $\K$ has amalgamation in $\lambda$. We check that the hypotheses of \cite[VI.8.1(2)]{shelahaecbook2} are satisfied. The only ones that we are not explicitly assuming are:
    \begin{enumerate}
    \item The extension property in $\K_\lambda$, i.e.\ for every $M \lea N$ both in $\K_\lambda$ and every $p \in \gS (M)$, if $p$ is not algebraic (i.e.\ not realized inside $M$), then $p$ has a \emph{nonalgebraic} extension to $\gS (N)$: holds by \cite[VI.2.23(1)]{shelahaecbook2}.
    \item The existence of an inevitable type in $\K_\lambda$: holds by \cite[VI.5.3(1)]{shelahaecbook2} and the density of minimal types (see the proof of $(\ast)_5$ in \cite[II.2.7]{sh394}).
    \end{enumerate}
  \item By (\ref{frame-1}), there is an almost good $\lambda$-frame $\s$ on $\K_\lambda$. The proof of \cite[E.8]{downward-categ-tame-apal} goes through even for almost good $\lambda$-frames and gives that $\s$ is weakly successful (note that Shelah's proof of Fact \ref{conj-fact} still goes through in almost good $\lambda$-frames). By \cite[4.3]{jash940-v1}, $\s$ is in fact a good $\lambda$-frame.
  \item By Fact \ref{ap-categ}, $\K$ has amalgamation in $\lambda$ and $\lambda^+$. By (\ref{frame-2}), there is a weakly successful good $\lambda$-frame $\s$ on $\K_{\lambda}$. By Fact \ref{succ-nm}, $\s$ is successful.
  \item By \cite[VI.8.1]{shelahaecbook2}, there is an almost good $\lambda$-frame $\s$ on $\K_\lambda$. By \cite[VII.6.17]{shelahaecbook2}, $\s$ has existence for a certain relative of uniqueness triples. Thus by \cite[VII.7.19(2)]{shelahaecbook2}, $\s$ is actually a good $\lambda$-frame. By \cite[II.5.11]{shelahaecbook}, $\s$ is weakly successful. By Fact \ref{succ-nm}, $\s$ is successful.
  \end{enumerate}
\end{proof}

\begin{fact}\label{upward-categ}
  Let $\K$ be an AEC and let $\lambda \ge \LS (\K)$. If $\K$ is categorical in both $\lambda$ and $\lambda^+$ and there is a successful good $\lambda$-frame on $\K_\lambda$, then $\K$ is categorical in $\lambda^{++}$.
\end{fact}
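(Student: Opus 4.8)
The plan is to use the machinery of the previous sections to derive $(\lambda, \lambda^+)$-tameness and amalgamation in $\lambda^+$, and then to conclude categoricity in $\lambda^{++}$ via the Grossberg--VanDieren upward categoricity transfer. First, since $\LS (\K) \le \lambda$, every model of $\K$ of cardinality $\ge \lambda$ is a directed union of submodels of cardinality $\lambda$, so $\K$ coincides in all cardinalities $\ge \lambda$ with the AEC generated by $\s$; replacing $\K$ by that AEC, we may assume $\s$ generates $\K$, and $\s$ is categorical because $\K$ is categorical in $\lambda$.

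The first real step is to show that $\s$ is decent. Being a good $\lambda$-frame, $\s$ makes $\K$ stable in $\lambda$ with amalgamation in $\lambda$, so there is a saturated model of cardinality $\lambda^+$; by categoricity in $\lambda^+$ it is the only model of that cardinality. Hence the union of any increasing chain $\seq{M_i : i < \delta}$ of saturated models in $\K_{\lambda^+}$ with $\delta < \lambda^{++}$ is a model of cardinality $\lambda^+$, and so is saturated. This is condition (3) of Fact \ref{sl-easy}, so $\K$ has a superlimit in $\lambda^+$, and Theorem \ref{decent-thm} gives that $\s$ is decent. Now $\s$ is successful and decent, so by Fact \ref{reflect-equiv-fact} it reflects down; thus $\leap{\s^+}$ is just $\lea$ on $\K_{\s^+}$, which by categoricity in $\lambda^+$ has the same models as $\K_{\lambda^+}$. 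By Fact \ref{good-ext-thm}, $\s^+$ is a $\goodp$ (in particular good) $\lambda^+$-frame, and since its ordering is $\lea$, it is a good $\lambda^+$-frame on $\K_{\lambda^+}$. Condition (\ref{strong-tameness-3}) of Corollary \ref{strong-tameness-cor} therefore holds, whence $\K$ has amalgamation in $\lambda^+$ and is $(\lambda, \lambda^+)$-tame.

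Finally, $\K$ is categorical in $\lambda$ and $\lambda^+$, is $(\lambda, \lambda^+)$-tame, and, carrying good $\lambda$- and $\lambda^+$-frames, has amalgamation, joint embedding, no maximal models, and stability in both $\lambda$ and $\lambda^+$. By \cite[6.3]{tamenessthree} (or its local refinement \cite{downward-categ-tame-apal}), $\K$ is categorical in $\lambda^{++}$. I expect this last step to be the one needing the most care: one has to propagate amalgamation (and enough tameness) one cardinal further to $\lambda^{++}$ and then verify that the saturated model of cardinality $\lambda^{++}$ is the only model of that cardinality, equivalently, that every model of cardinality $\lambda^{++}$ is $\lambda^{++}$-saturated. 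With the good $\lambda^+$-frame on $\K_{\lambda^+}$ and $(\lambda, \lambda^+)$-tameness available this is classical, but it is where the structure theory is genuinely used; everything before it is an assembly of the results already established in this paper.
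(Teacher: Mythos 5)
Your proof follows the paper's own argument for its first half and then diverges. The paper's proof is short: categoricity in $\lambda^+$ gives a superlimit in $\lambda^+$, hence $\s$ is decent by Theorem \ref{decent-thm}, hence $\goodp$ by Fact \ref{reflect-equiv-fact}, and then the conclusion is obtained by combining \cite[III.2.10(2)]{shelahaecbook} with \cite[III.2.11(1)]{shelahaecbook}, i.e.\ by staying entirely inside Shelah's frame machinery. You take the same first step (superlimit $\Rightarrow$ decent), correctly push on to ``reflects down'' and to a good $\lambda^+$-frame on $\K_{\lambda^+}$ via Facts \ref{reflect-equiv-fact} and \ref{good-ext-thm} (using that $\Ksatp{\lambda^+}_{\lambda^+} = \K_{\lambda^+}$ by categoricity), and then extract amalgamation in $\lambda^+$ and $(\lambda,\lambda^+)$-tameness from Corollary \ref{strong-tameness-cor}. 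All of that is sound and is a legitimate alternative assembly of the results of this paper.

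The weak point is the last step. As stated in this paper, \cite[6.3]{tamenessthree} is a theorem about a \emph{$\lambda$-tame} AEC with amalgamation and no maximal models --- that is, tameness for models of all cardinalities $\ge \lambda$ and amalgamation globally (Grossberg--VanDieren work under a monster-model-style hypothesis). What you have produced is strictly local: $(\lambda,\lambda^+)$-tameness and amalgamation only in $\lambda$ and $\lambda^+$; you have no information about $\K$ above $\lambda^{++}$, nor even amalgamation in $\lambda^{++}$. So the theorem you cite does not apply as stated, and the burden of the proof is shifted onto the parenthetical ``(or its local refinement \cite{downward-categ-tame-apal})'' together with your closing admission that this is ``where the structure theory is genuinely used.'' A fully local upward transfer to $\lambda^{++}$ does exist and can be run from the data you have assembled, but it is exactly the content that the paper delegates to \cite[III.2.10(2), III.2.11(1)]{shelahaecbook}; since Fact \ref{upward-categ} is a pure ZFC statement with no global hypotheses, you must either invoke a citation whose hypotheses are genuinely local or carry out the saturation argument for unions of chains of saturated models of cardinality $\lambda^+$ yourself. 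As written, that final step is a gap --- not a conceptual error, but a citation that does not cover the situation at hand.
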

\begin{proof}
  Let $\s$ be a successful good $\lambda$-frame on $\K_\lambda$. Since $\K$ is categorical in $\lambda^+$, there is a superlimit in $\lambda^+$, hence by Theorem \ref{decent-thm} $\s$ is decent, hence by Fact \ref{reflect-equiv-fact} is $\goodp$. Now combine \cite[III.2.10(2)]{shelahaecbook} with \cite[III.2.11(1)]{shelahaecbook}.
\end{proof}

\begin{proof}[Proof of Corollary \ref{categ-cor}]
  By categoricity in $\lambda$ and $\lambda^+$ all good $\lambda$-frames on $\K_\lambda$ are categorical, $\Ksatp{\lambda^+}_{\lambda^+} = \K_{\lambda^+}$, and $(\lambda, \lambda^+)$-weak tameness is the same as $(\lambda, \lambda^+)$-tameness. By Theorem \ref{decent-thm}, any good $\lambda$-frame on $\K_\lambda$ is decent. We will use this without comments.
  
  \begin{itemize}
  \item \underline{(\ref{categ-0}) implies (\ref{categ-05})}:    By Fact \ref{reflect-equiv-fact}, the definition of reflecting down (Definition \ref{reflect-def}) and the definition of $\s^+$ (Definition \ref{succ-def}), $\K_{\s^+} = \Ksatp{\lambda^+}_{\lambda^+} = \K_{\lambda^+}$, so the result follows from Fact \ref{good-ext-thm}.
  \item \underline{(\ref{categ-05}) implies (\ref{categ-1})}: By definition of a good $\lambda$-frame, $\K$ is stable in $\lambda$ and by definition of a good $\lambda^+$-frame $\K$ has amalgamation in $\lambda^+$. Since a good $\lambda^+$-frame has no maximal models in $\lambda^+$, $\K_{\lambda^{++}} \neq \emptyset$. By Corollary \ref{main-cor-gch}, $\K$ is $(\lambda, \lambda^+)$-tame.
    \item \underline{(\ref{categ-1}) implies (\ref{categ-0})}: By Fact \ref{ap-categ}, $\K$ has amalgamation in $\lambda$. By Fact \ref{shelah-categ-frame}, there is an almost good $\lambda$-frame on $\K_\lambda$. By the proof of Fact \ref{ext-frame}, $\K$ is stable in $\lambda^+$. By Fact \ref{shelah-categ-frame}, there is a weakly successful good $\lambda$-frame $\s$ on $\K_\lambda$. By Fact \ref{reflect-equiv-fact}, $\s$ is successful.
    \item \underline{(\ref{categ-0}) implies (\ref{categ-2})}: By Fact \ref{upward-categ}.
    \item \underline{(\ref{categ-2}) implies (\ref{categ-3})}: Trivial.
    \item   \underline{(\ref{categ-3}) implies (\ref{categ-0})}: By Fact \ref{shelah-categ-frame}.
    \item \underline{(\ref{categ-3}) implies (\ref{categ-4})}: (\ref{categ-3}) trivially implies that $1 \le \Ii (\K, \lambda^{++}) < 2^{\lambda^{++}}$. We have also seen that (\ref{categ-3}) implies (\ref{categ-0}) implies (\ref{categ-05}) which by definition implies stability in $\lambda$ and $\lambda^+$.
    \item \underline{(\ref{categ-4}) implies (\ref{categ-0})}: By Fact \ref{shelah-categ-frame}.
      \end{itemize}
\end{proof}

\bibliographystyle{amsalpha}
\bibliography{local-ss}

\end{document}